\documentclass[lettersize,journal]{IEEEtran}
\usepackage{amsmath,amsfonts,amssymb}
\usepackage{balance}
\usepackage{algorithmic}
\usepackage{algorithm}
\usepackage{array}
\usepackage{textcomp}
\usepackage{stfloats}
\usepackage{url}
\usepackage{verbatim,footnote}
\usepackage{epsfig,epstopdf}
\usepackage{graphicx}
\usepackage{cite}
\usepackage{color,xcolor,makecell,multirow,diagbox}
\usepackage[justification=centering,font=small]{caption}
\usepackage[caption=false,font=small]{subfig}
\usepackage{CJK}
\usepackage{multicol}
\usepackage{tabularx}
\usepackage{paralist} 
\usepackage{enumitem}

\setlist[enumerate]{leftmargin=*,itemsep=0pt,parsep=0pt,topsep=0pt,partopsep=0pt}
\setlist[itemize]{leftmargin=*,itemsep=0pt,parsep=0pt,topsep=0pt,partopsep=0pt}

\usepackage{autobreak}
\newtheorem{theorem}{Theorem}
\newtheorem{myassumptionprimal}{Assumption}

\newtheorem{myassumptiondual}{Assumption}

\newtheorem{statement}{Statement}
\newtheorem{definition}{Definition}
\newtheorem{lemma}{Lemma}
\newtheorem{example}{Example}

\newtheorem{assumption}{Assumption}

\makeatletter
\renewcommand{\maketag@@@}[1]{\hbox{\m@th\normalsize\normalfont#1}}%
\makeatother

\makeatletter
\let\NAT@parse\undefined
\makeatother
\usepackage{hyperref}  
\hypersetup{hidelinks} 

\usepackage{etoolbox}
\makeatletter
\patchcmd{\@makecaption}
{\\}
{.\ }
{}
{}

\def\BibTeX{{\rm B\kern-.05em{\sc i\kern-.025em b}\kern-.08em
		T\kern-.1667em\lower.7ex\hbox{E}\kern-.125emX}}

\begin{document}
\title{Decomposition and Successive Decomposition Methods and Algorithms for Nonconvex Optimization}

\author{Yiqing Zhai, Ying Cui,~\IEEEmembership{Member,~IEEE,} and Danny H. K. Tsang,~\IEEEmembership{Life Fellow,~IEEE} 
\thanks{
	The authors are with IoT Thrust, The Hong Kong University of Science and Technology (Guangzhou), Guangzhou 511400, China (e-mail: yzhai837@connect.hkust-gz.edu.cn; yingcui@ust.hk; eetsang@ust.hk). This paper was presented in part at IEEE GLOBECOM 2025~\cite{zhaizfprimal}.
}
}

\maketitle

\begin{abstract} 
Existing results on decomposition methods and algorithms for nonconvex problems are minimal. 
Parallel decomposition algorithms do not exist for nonconvex~problems with coupling nonlinear equality constraints. 
Besides, decomposition structures (i.e., coupling variables and constraints) are not fully exploited in designing decomposition methods and algorithms. 
In this paper, we consider nonconvex problems with decomposition structures that are more general than those handled in the existing literature. 
We propose primal and dual decomposition and successive primal and successive dual decomposition methods and algorithms for these nonconvex problems,  
which exploit decomposition structures, allow for parallel and distributed~implementations,  produce the original nonconvex problems' stationary points, and offer good opportunities to achieve superior tradeoff between convergence performance and computation~time.
Finally, we compare the proposed methods and algorithms, extend them to indirect and two-level decomposition methods and algorithms, and provide examples and numerical results to demonstrate their respective values. 
Notably, the proposed decomposition and~successive decomposition methods and algorithms generalize the basic ones for convex problems, and the proposed successive decomposition methods and algorithms extend the existing ones for nonconvex problems, together enriching the decomposition theory.
\end{abstract}

\begin{IEEEkeywords}
Nonconvex optimization, convex optimization, primal decomposition, dual decomposition, parallel and distributed algorithms. 
\end{IEEEkeywords}

\section{Introduction}

\IEEEPARstart{L}{arge}-scale optimization problems have various applications in several engineering fields, such as information processing, control, communication networks, machine learning, and transportation \cite{scutari2018parallel}.
Solving large-scale problems requires substantial computational resources (time and storage) and is particularly challenging.
Decomposition methods and algorithms have been developed to alleviate such requirements. 
The basic idea is to decompose a large-scale problem into smaller subproblems that can be solved separately (in parallel or sequentially and in a centralized or distributed  manner) and are coordinated by a master problem.
Decomposition inherently reduces the overall computational complexity and hence computation time for solving a large-scale problem.\footnote{The overall computational complexity for solving a problem usually grows more than linearly with the problem size~\cite{boyd2007notes}.} 
If the subproblems are solved in parallel, further computation time reduction can be achieved. 
Decomposition first appeared in early work on large-scale linear problems in the 1960s \cite{dantzig1960decomposition,bender1962partitioning} and has been extended to more general convex problems \cite{lasdon1970optimization,bertsekas1997parallel,bertsekas2016nonlinear,boyd2007notes}.

Existing decomposition methods for convex problems can generally be classified into primal and dual decomposition methods that naturally handle coupling variables and constraints, respectively.
In primal decomposition, the original (primal) problem is decomposed into smaller (primal) subproblems that can be separately solved by fixing the coupling variables, and the coupling variables are handled by solving a master (primal) problem.
In dual decomposition, the dual problem of the original problem, formed by relaxing the coupling constraints, is decomposed into smaller (dual) subproblems, and the coupling constraints are dealt with by solving a master (dual) problem. 
Based on different algorithms for solving the master (primal or dual) problem, (primal or dual) decomposition methods can lead to various (primal or dual) decomposition algorithms.
For example, 
in the (primal and dual) decomposition algorithms in \cite[Section~7.6.2]{bertsekas2016nonlinear}, the cutting plane method is used to solve the master (primal and dual) problems. 
In the (primal and dual) decomposition algorithms in \cite{boyd2007notes}, the master (primal and dual) problems are solved using the subgradient method.
Successful applications of these decomposition algorithms can be found in many diverse fields, such as communication networks \cite{palomar2005convex,palomar2005minimum,johansson2006mathematical,han2017backhaul,pennanen2011decentralized},  
computer vision \cite{torresani2013dual,komodakis2007mrf,komodakis2011mrf,jojic2010accelerated},
and natural language processing \cite{rush2010dual,rush2012tutorial}. 
However, these decomposition methods and algorithms mentioned above are applicable only for convex problems,
relying on the subproblems' globally optimal points and the master problem’s convexity.

Some recent work \cite{scutari2017parallel,scutari2014decomposition,alvarado2014new,shi2020penalty} investigates decomposition algorithms for nonconvex problems. 
Expressly, \cite{scutari2017parallel,scutari2014decomposition,alvarado2014new} propose Successive Convex Approximation (SCA)-based decomposition algorithms for nonconvex problems with separable convex constraints \cite{scutari2014decomposition, alvarado2014new} or generally nonconvex coupling inequality constraints \cite{scutari2017parallel}, which allow for parallel implementations. 
The idea is to convert the original nonconvex problem into a sequence of successively refined convex approximate problems using the SCA method \cite{beck2010sequential} and then directly separate each convex approximate problem into smaller subproblems that can be solved in parallel \cite{scutari2014decomposition, alvarado2014new} or solve each convex approximate problem using the standard (primal and dual) decomposition algorithms for convex problems \cite{scutari2017parallel}. 
Notice that the SCA-based Primal Decomposition (SCAPD) algorithm  in \cite{scutari2017parallel} does not fully exploit the decomposition structure of the original nonconvex problem in convex approximation and  solves each convex approximate problem's equivalent problem with a larger size (including slack variables and additional constraints), yielding possibly worse convergence performance and higher computational complexity, and the SCA-based Dual Decomposition (SCADD) algorithm in \cite{scutari2017parallel} does not consider coupling linear equality constraints.
In \cite{shi2020penalty}, the authors propose a Penalty Dual Decomposition (PDD) algorithm for nonconvex problems with coupling variables and constraints by integrating the penalty method \cite{bertsekas2016nonlinear} and the Augmented Lagrangian (AL) method \cite{hestenes1969multiplier} and utilizing a Block-Successive-Upper-Bound-Minimization (BSUM)-based algorithm,  i.e., rBSUM \cite{shi2020penalty}, to decompose the AL problems into smaller subproblems. 
Nevertheless, PDD does not allow for parallel implementations (due to rBSUM's sequential update mechanism) and therefore cannot effectively reduce the computation time.

To sum up, the existing decomposition methods and algorithms for nonconvex problems \cite{scutari2017parallel,scutari2014decomposition,alvarado2014new,shi2020penalty} cannot handle nonlinear equality constraints, allow for parallel implementations, or exploit complete decomposition structures, possibly limiting their effectiveness and efficiency. 
To address these issues, in this paper, we investigate nonconvex problems with decomposition structures (i.e., coupling variables and~constraints) and design new (primal and dual) decomposition methods and algorithms for nonconvex problems, which exploit decomposition structures, allow for parallel and distributed implementations, and produce the original nonconvex problems' stationary points.
More detailed contributions are summarized as~follows.

1) We consider two nonconvex problems with coupling variables in the objective functions and constraints.
Specifically, the equality constraints of the first problem are generally nonlinear, while those of the second problem are linear. 
For the first nonconvex problem, we propose a Primal Decomposition Method (PD-M), which no longer relies on the requirements for the subproblems' optimal points and the master problem's convexity.
Based on PD-M, we propose a Primal Decomposition Algorithm (PD-A), which utilizes SCA-based algorithms to solve the master problem.
For the second nonconvex problem, we propose a Successive Primal Decomposition Method (SPD-M), which converts the original nonconvex problem into a sequence of successively refined convex subproblems and master (primal) problems.
Based on SPD-M, we propose a Successive Primal Decomposition Algorithm (SPD-A), which solves each convex approximate problem without increasing its problem size, possibly achieving better convergence performance and shorter computation time than SCAPD \cite{scutari2017parallel}.

2) We consider two nonconvex problems with coupling equality and inequality constraints.
In particular, the coupling equality constraints of the first problem are generally nonlinear, whereas those of the second problem are linear. 
For the first nonconvex problem, we propose a Dual Decomposition Method (DD-M), which no longer relies on the original problem’s strong duality, the subproblems' optimal points, and the master problem's convexity.
Based on \mbox{DD-M}, we propose a Dual Decomposition Algorithm (DD-A), which utilizes SCA-based algorithms to solve the master problem with its variables updated in closed form.
For the second nonconvex problem, we propose a Successive Dual Decomposition Method \mbox{(SDD-M)}, which converts the original nonconvex problem into a sequence of successively refined convex subproblems and master (dual) problems and does not rely on the original problem’s strong duality.
Based on SDD-M, we present a Successive Dual Decomposition Algorithm (SDD-A), which slightly extends SCADD \cite{scutari2017parallel} to nonconvex problems with coupling linear equality constraints.

3) We compare the proposed methods and algorithms (i.e., \mbox{PD-M/A}, \mbox{SPD-M/A}, \mbox{DD-M/A}, and \mbox{SDD-M/A}) for nonconvex problems,  extend them to indirect and two-level decomposition methods and algorithms, and provide examples as well as numerical results to demonstrate their strengths.
Specifically, \mbox{PD-M/A} and \mbox{DD-M/A} apply to a broader range of nonconvex problems, exploit more decomposition structures, and yield possibly simpler algorithm structures but require stronger conditions than \mbox{SPD-M/A} and \mbox{SDD-M/A}. 
In addition, the proposed four algorithms exhibit their own strengths in the tradeoff between convergence performance and computation time in respective nonconvex problems.
On the other hand, \mbox{PD-M} and \mbox{DD-M} can reduce to the basic primal and dual decomposition methods for convex problems~\cite{boyd2007notes,palomar2006tutorial}, respectively, and PD-A, DD-A, SPD-M/A, and SDD-M/A can serve as new decomposition methods and algorithms for convex problems, all naturally producing optimal points. 
Furthermore, we extend the proposed methods and algorithms for nonconvex problems with coupling variables or constraints to indirect decomposition methods and algorithms for nonconvex problems without explicit coupling variables or constraints and two-level decomposition methods and algorithms for nonconvex problems with explicit coupling variables and constraints.

\begin{figure}[t]
	\centering
	\includegraphics[width=0.4\textwidth]{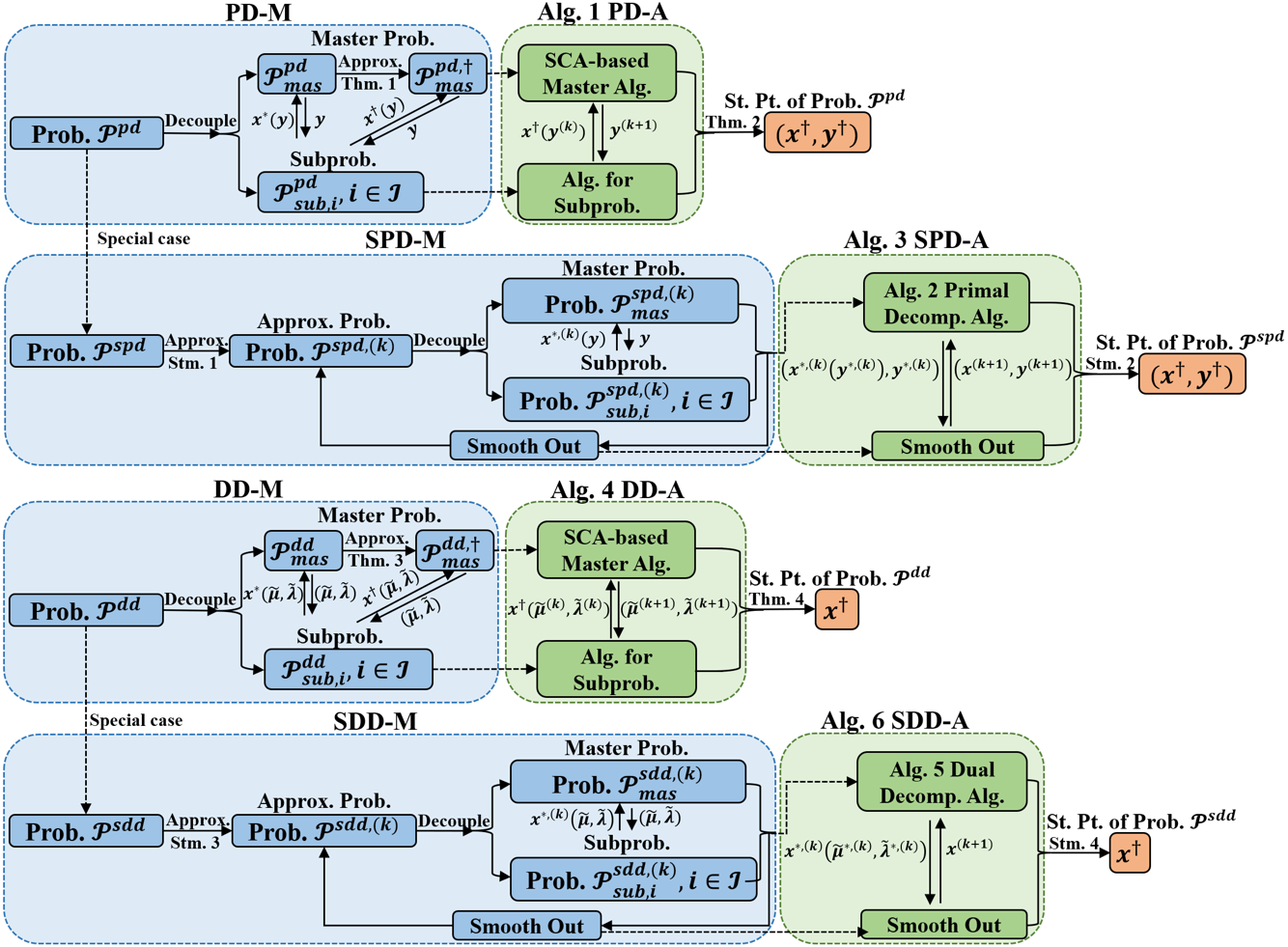}
	\caption{Solution framework.}
	\label{fig framework}
\end{figure}

Fig.~\ref{fig framework} illustrates the solution framework of this paper.   
The rest of this paper is organized as follows. 
Section~\ref{sec pre} introduces some fundamental definitions and assumptions employed in this paper.
Sections~\ref{primal section} and \ref{sp section} investigate nonconvex problems with coupling variables and propose PD-M/A and SPD-M/A, respectively.
Sections~\ref{dual section} and \ref{sd section} study nonconvex problems with coupling constraints and develop DD-M/A and \mbox{SDD-M/A}, respectively.
Section~\ref{sec compare} provides comparisons of the proposed methods and algorithms for nonconvex problems and elucidates their connections with the basic decomposition methods and algorithms for convex problems.
Section~\ref{section extension} extends the proposed methods and algorithms to indirect and two-level decomposition methods and algorithms. 
Section~\ref{sec exm} and \ref{sec numerical results} provide examples of the proposed algorithms and their numerical results, respectively.
Finally, Section~\ref{sec conclusion} draws some conclusions and discusses potential future research directions.

\textbf{Notation}: The set of positive integers smaller than or equal to a positive integer $I$ is denoted by $\mathcal{I}\triangleq\{1, \cdots, I\}$.
The identity matrix of size $n$ is denoted by $\mathbf{I}_{n}$, and the $n\times m$ zero matrix is denoted by $\mathbf{0}_{n\times m}$.
The complement of a set $\mathcal{A}\subseteq \mathcal{C}$ is denoted by $\mathcal{A}^c$.
The interior of a set $\mathcal{A}$ is denoted by $\operatorname{int}(\mathcal{A})$.
Superscripts $(\cdot)^{T}$ denote the transpose.
The Hadamard product of two vectors $\mathbf{x}$ and $\mathbf{y}$ is denoted by $\mathbf{x} \odot \mathbf{y}$. 
The $i$-th coordinate of a vector $\mathbf{x}$ is denoted by $\langle\mathbf{x}\rangle_i$.
$\mathbf{x}\triangleq(\mathbf{x}_{1},\cdots,\mathbf{x}_{I})$ with $\mathbf{x}_{i}\in\mathbb{R}^{n_{i}}$, $i\in\mathcal{I}$ represents a $\sum_{i\in \mathcal{I}}n_{i}$-dimensional vector.
$\mathbf{A}\triangleq[\mathbf{x}_{1}\cdots\mathbf{x}_{I}]$ with $\mathbf{x}_{i}\in\mathbb{R}^{n}$, $i\in\mathcal{I}$ represents a $n\times I$ matrix.
The $\ell_{2}$-norm of a vector $\mathbf{x}$ is denoted by $\|\mathbf{x}\|_2$. 
Operator $\operatorname{P}_{\mathcal{C}}: \mathbb{R}^{n} \rightarrow \mathcal{C}$ represents the Euclidean projection of a vector in $\mathbb{R}^{n}$ onto the nonempty closed convex set $\mathcal{C}$ and $\operatorname{P}_{\mathbb{R}_{+}^{n}}$ is denoted by $[\cdot]_{+}$ for short. 
The submatrix consisting of the $i$-th to $l$-th rows and the $j$-th to $k$-th columns of a matrix $\mathbf{A}$ is denoted by $\mathbf{A}[i:l,  j:k]$.
Function $f:\mathbb{R}^{n}\rightarrow \mathbb{R}$ represents a real-valued function, and $\nabla f(\mathbf{x})\in \mathbb{R}^{n}$ and $\partial f(\mathbf{x})\in \mathbb{R}^{n}$ represent its gradient and subgradient at $\mathbf{x}\in\mathbb{R}^{n}$, respectively.
Function $\mathbf{f}:\mathbb{R}^{n}\rightarrow \mathbb{R}^{m}$ represents a vector-valued function, and $\nabla \mathbf{f}(\mathbf{x})\in\mathbb{R}^{n\times m}$ represents its gradient, i.e., the transpose of its Jacobian matrix, at $\mathbf{x}\in\mathbb{R}^{n}$.
Unless otherwise specified, the superscript $(\cdot)^{(k)}$ in $\mathbf{x}^{(k)}$ is reserved for the algorithm iteration and $\mathbf{x}^{(k)}$ represents the value of $\mathbf{x}$ at the $k$-th iteration.
$\mathcal{N}(\mu,\sigma^2)$ and $\mathcal{U}(a,b)$ represent  normal distribution with mean $\mu$ and variance $\sigma^2$ and uniform distribution on $[a,b]$, respectively.
Other key notations used in this paper are listed in Table~\ref{table notation}.

\begin{table}[t]
	\centering
	\caption{Key notation.}
	\begin{tabular}{|p{3.15cm}|p{4.75cm}|} 
		\hline
		Notation & Explanation  \\ 
		\hline
		$\mathcal{P}$, M, A & Problem, method, algorithm  \\ 
		\hline	
		PD ($pd$), SPD ($spd$) & Primal decomposition, successive primal decomposition\\ 
		\hline	
		DD ($dd$), SDD ($sdd$) & Dual decomposition, successive dual decomposition\\ 
		\hline	
		$sub$, $mas$ & Subproblem, master problem\\
		\hline	
		$(\cdot)^*$, $(\cdot)^\dagger$ & Related to optimal point, stationary point\\
		\hline	
		$\mathbf{x}$, $\mathbf{y}$, $\mathbf{x}_{i}$ & Primal variables\\
		\hline	
		$f$ ($f_{i}$) & Objective function (its component) \\
		\hline	
		$\mathbf{g}$, $\tilde{\mathbf{g}}_{i}$, $\mathbf{g}_{i}$, $\mathbf{h}$, $\tilde{\mathbf{h}}_{i}$ & Constraint functions or their components\\
		\hline	
		$\boldsymbol{\mu}$,  $\boldsymbol{\mu}_{i}$, $\tilde{\boldsymbol{\mu}}$, $\tilde{\boldsymbol{\mu}}_{i}$, $\boldsymbol{\lambda}$,  $\tilde{\boldsymbol{\lambda}}$, $\tilde{\boldsymbol{\lambda}}_{i}$ & Dual variables\\
		\hline	
		$L$ ($L_{i}$), $q$ & Lagrangian (its component), dual function \\
		\hline	
		$F$,  $F_{i}$, $F^{\dagger}$, $\mathbf{G}$, $\mathbf{G}_{i}$, $\tilde{\mathbf{G}}_{i}$,  $Q^{\dagger}$ & Approximate functions\\
		\hline	
		$\gamma$, $\gamma_{in}$ /  $k$, $t$ & Stepsize / iteration index\\
		\hline	
	\end{tabular}
	\label{table notation}
\end{table}

\textbf{General Statements}: 
Optimization problems are feasible and, in general, nonconvex. 
Objective and inequality constraint functions are generally nonconvex. The goal of solving each generally nonconvex problem is to find its stationary points.\footnote{Most iterative algorithms for convex and nonconvex problems are designed to solve the KKT conditions and return stationary points. For convex problems, any stationary point is a  globally optimal point. For nonconvex problems, a stationary point can be a local/global minimum point, local/global maximum point, or saddle point. } 
All subproblems are separately solved (in parallel or sequentially and in a centralized or distributed  manner).

\section{Preliminaries}\label{sec pre}
In this section, we present some fundamental definitions and assumptions employed in this paper.
Consider a nonconvex problem $\mathcal{P}(\boldsymbol{\theta})$ parameterized by $\boldsymbol{\theta}\in \Theta$:
\begin{align}
	\mathcal{P}(\boldsymbol{\theta}):\ \min_{\mathbf{x}} \ &f(\mathbf{x},\boldsymbol{\theta})\nonumber\\  
	s.t. \ 	&\mathbf{g}(\mathbf{x},\boldsymbol{\theta})\preceq \mathbf{0},\label{pre ineq-cons}\\
	&\mathbf{h}(\mathbf{x},\boldsymbol{\theta})=\mathbf{0},\\
	&\mathbf{x}\in \mathcal{X},
\end{align}
where $\mathbf{x}\in \mathbb{R}^{n}$,
$\mathcal{X}$ is a nonempty, closed, and convex set contained in an open set $\mathcal{U}$, and $f:\mathcal{U}\times \Theta\rightarrow \mathbb{R}$, $\mathbf{g}:\mathcal{U}\times \Theta\rightarrow \mathbb{R}^{r}$, and $\mathbf{h}:\mathcal{U}\times \Theta\rightarrow \mathbb{R}^{m}$ are continuously differentiable on $\mathcal{U}\times\Theta$.
Notably, $\mathbf{h}$ is generally not affine in $\mathbf{x}$.

First, we introduce some basic definitions that are used throughout this paper.

\begin{definition}[Karush-Kuhn-Tucker (KKT) Conditions {\cite[Section 5.3.3]{boyd2004convex}}]
	\label{definition kkt-conditions}
	The KKT conditions for Problem $\mathcal{P}(\boldsymbol{\theta})$ for fixed $\boldsymbol{\theta}$ are given by:
	\begin{gather}
		\begin{gathered}
			(\nabla_{\mathbf{x}} f(\mathbf{x},\boldsymbol{\theta})\! +\! \nabla_{\mathbf{x}}\mathbf{g}(\mathbf{x},\!\boldsymbol{\theta})\boldsymbol{\mu}\!+\! \nabla_{\mathbf{x}}\mathbf{h}(\mathbf{x},\boldsymbol{\theta})\boldsymbol{\lambda})^{T}(\mathbf{x}'\!-\!\mathbf{x})
			\!\geq\!0, \\
			\forall \mathbf{x}'\in\mathcal{X},\label{KKT-1}
		\end{gathered}
		\\
\boldsymbol{\mu}\odot\mathbf{g}(\mathbf{x},\boldsymbol{\theta})=\mathbf{0},\label{KKT-2}\\
		\boldsymbol{\mu}\succeq\mathbf{0}, \label{KKT-3}
		\\
		\mathbf{g}(\mathbf{x},\boldsymbol{\theta})\preceq \mathbf{0},
		\mathbf{h}(\mathbf{x},\boldsymbol{\theta})=\mathbf{0}, 
		\mathbf{x}\in \mathcal{X}.\label{KKT-4}
	\end{gather}
\end{definition}

If the point $\mathbf{x}$ in the KKT conditions is an interior point of $\mathcal{X}$, i.e., $\mathbf{x}\in\operatorname{int}(\mathcal{X})$, then \eqref{KKT-1} reduces to:
\begin{align}
	\nabla_{\mathbf{x}} f(\mathbf{x},\boldsymbol{\theta})\! + \! \nabla_{\mathbf{x}}\mathbf{g}(\mathbf{x},\boldsymbol{\theta})\boldsymbol{\mu}\!+\! \nabla_{\mathbf{x}}\mathbf{h}(\mathbf{x},\boldsymbol{\theta})\boldsymbol{\lambda} =\mathbf{0}.\label{KKT-5}
\end{align}

\begin{definition}[KKT system]\label{def KKT-system}
	Given a point $\mathbf{x}\in\mathcal{U}$, its KKT system for Problem $\mathcal{P}(\boldsymbol{\theta})$ for fixed $\boldsymbol{\theta}$ is given by:\footnote{The KKT system incorporates the KKT conditions in (\ref{KKT-2}) and (\ref{KKT-5}).}
	\begin{equation*}\label{KKT sys}
		\begin{aligned}
			\begin{cases}
				\nabla_\mathbf{x} \mathbf{g}(\mathbf{x}, \boldsymbol{\theta})\boldsymbol{\mu}+\nabla_\mathbf{x} \mathbf{h}(\mathbf{x}, \boldsymbol{\theta})\boldsymbol{\lambda}=-\nabla_\mathbf{x} f(\mathbf{x}, \boldsymbol{\theta}),\\
				\langle\boldsymbol{\mu}\rangle_{j}=0, \quad j\notin \mathcal{A}(\mathbf{x}, \boldsymbol{\theta}),
			\end{cases}
		\end{aligned}
	\end{equation*}
	where $\mathcal{A}(\mathbf{x}, \boldsymbol{\theta})\triangleq\{j\in\{1, \cdots, r\}  |  \langle\mathbf{g}(\mathbf{x}, \boldsymbol{\theta})\rangle_{j}=0\}$ represents the index set of the inequality constraints in \eqref{pre ineq-cons} of Problem $\mathcal{P}(\boldsymbol{\theta})$ that are active at $\mathbf{x}$.
\end{definition}

\begin{definition}[KKT Function]\label{definition kkt-function}
	A vector function $\mathbf{k}:\mathcal{U}\times\mathbb{R}^{r}\times\mathbb{R}^{m}\times\Theta\rightarrow \mathbb{R}^{n+r+m}$, given by:\footnote{The KKT function incorporates the functions in the equality conditions of the KKT conditions in (\ref{KKT-2}), (\ref{KKT-4}), and (\ref{KKT-5}). It is introduced for notation simplicity and will be used in Assumptions~\ref{primal assump-stationary-2} and~\ref{dual assump-stationary-2}.} 
	\begin{align*}
		\mathbf{k}(\mathbf{x},\boldsymbol{\mu},\boldsymbol{\lambda},\boldsymbol{\theta})
		\triangleq\begin{pmatrix}
			\nabla_\mathbf{x} f(\mathbf{x}, \boldsymbol{\theta})+\nabla_\mathbf{x} \mathbf{g}(\mathbf{x}, \boldsymbol{\theta})\boldsymbol{\mu}+\nabla_\mathbf{x} \mathbf{h}(\mathbf{x}, \boldsymbol{\theta})\boldsymbol{\lambda}
			\\
			\boldsymbol{\eta}(\mathbf{x},\boldsymbol{\mu},\boldsymbol{\theta})
			\\
			\mathbf{h}(\mathbf{x}, \boldsymbol{\theta})
		\end{pmatrix},
	\end{align*}
	is called the KKT function of Problem $\mathcal{P}(\boldsymbol{\theta})$ for $\boldsymbol{\theta}\in\Theta$, where
	\begin{align*}
		\langle\boldsymbol{\eta}(\mathbf{x},\boldsymbol{\mu},\boldsymbol{\theta})\rangle_{j}\triangleq\begin{cases}
			\langle\mathbf{g}(\mathbf{x}, \boldsymbol{\theta})\rangle_{j} & j\in \mathcal{A}(\mathbf{x}, \boldsymbol{\theta})\\
			\langle\boldsymbol{\mu}\rangle_{j} & j\notin \mathcal{A}(\mathbf{x}, \boldsymbol{\theta})
		\end{cases}, \ j=1, \cdots, r.
	\end{align*}
\end{definition}

\begin{definition}[Stationary Points {\cite[Definition 2]{scutari2017parallel}}]\label{def stationary point}
	A point $\mathbf{x}^{\dagger}(\boldsymbol{\theta})$ is a stationary point of Problem $\mathcal{P}(\boldsymbol{\theta})$ for fixed $\boldsymbol{\theta}$, if there are Lagrange multipliers $\boldsymbol{\mu}(\boldsymbol{\theta})$ and $\boldsymbol{\lambda}(\boldsymbol{\theta})$, that together with $\mathbf{x}^{\dagger}(\boldsymbol{\theta})$, satisfy the KKT conditions in \eqref{KKT-1}-\eqref{KKT-4}. 
\end{definition}

Given a stationary point $\mathbf{x}^{\dagger}(\boldsymbol{\theta})$, if $\mathbf{x}^{\dagger}(\boldsymbol{\theta})\in\operatorname{int}(\mathcal{X})$, we can find the corresponding Lagrange multipliers $\boldsymbol{\mu}(\boldsymbol{\theta})$ and $\boldsymbol{\lambda}(\boldsymbol{\theta})$ by solving its KKT system (involving only linear equalities) for $\boldsymbol{\mu}\succeq\mathbf{0}$ and $\boldsymbol{\lambda}$; otherwise, we need to solve the KKT conditions (consisting of inequalities).\footnote{This will be utilized in Step~5 of Algorithm~\ref{primal alg:alg1}, Step~5 of Algorithm~\ref{sp alg:alg1-PD}, and Step~5 of Algorithm~\ref{dual alg:alg1}.} 
Besides, stationary points of a nonconvex problem can be local minimum points.
However, in some rare cases \cite[Examples 4.1.1 and 4.3.6]{bertsekas2016nonlinear}, no local minimum points are stationary~points.

\begin{definition}[Regular Points {\cite[Definition 1]{scutari2017parallel}}]\label{def regularity}
	A feasible point $\mathbf{x}^{\ddagger}(\boldsymbol{\theta})$ of Problem $\mathcal{P}(\boldsymbol{\theta})$ for fixed $\boldsymbol{\theta}$ is called regular if the Mangasarian-Fromovitz Constraint Qualification (MFCQ) holds at $\mathbf{x}^{\ddagger}(\boldsymbol{\theta})$, i.e., if the following equations:
	\begin{gather*}
		(\nabla \mathbf{g}(\mathbf{x}^{\ddagger}(\boldsymbol{\theta}),\boldsymbol{\theta})\boldsymbol{\mu}\!+\!\nabla\mathbf{h}(\mathbf{x}^{\ddagger}(\boldsymbol{\theta}),\boldsymbol{\theta})\boldsymbol{\lambda})^{T}\!(\mathbf{x}-\mathbf{x}^{\ddagger}(\boldsymbol{\theta}))\!\geq\!0, \forall \mathbf{x}\in\mathcal{X},\\
		\boldsymbol{\mu}\succeq \mathbf{0}, \ \langle\boldsymbol{\mu}\rangle_{j}=0, \ \forall j\notin\mathcal{A}(\mathbf{x}^{\ddagger}(\boldsymbol{\theta}),\boldsymbol{\theta}),
	\end{gather*}
	imply $\boldsymbol{\mu}=\mathbf{0}$ and $\boldsymbol{\lambda}=\mathbf{0}$.
\end{definition}

Every regular local minimum point of a nonconvex problem is a stationary point \cite[Proposition 4.3.6]{bertsekas2016nonlinear}.

\begin{definition}[Equivalent Nonconvex Problems]\label{definition equivalent-problem}
	Two nonconvex problems are equivalent if from any stationary point of one, a stationary point of the other is readily found, and vice versa.
\end{definition}

Definition~\ref{definition equivalent-problem} extends the definition of equivalent problem for convex problems and is more reasonable for nonconvex problems (see Appendix~\ref{app equivalent nonconvex prob} for some general transformations that yield equivalent nonconvex problems).

Next, we present some assumptions that are used throughout the paper.
To simplify the presentation, we assume the following regularity condition for each problem in the paper.
\begin{assumption}\label{regularity}
	All feasible points of a problem are regular.\footnote{In practice, Assumption~\ref{regularity} is generally satisfied by a large class of problems.
		Even if it is not satisfied, one could relax this assumption and require regularity only at specific points \cite{scutari2017parallel}.} 
\end{assumption}

Assumption~\ref{regularity} ensures that every local minimum point of a problem is regular and hence stationary.
Under Assumption~\ref{regularity}, we can obtain local minimum points of nonconvex problems from stationary points.

Consider fixed $\boldsymbol{\theta}$, for simplicity, we omit $\boldsymbol{\theta}$ in $f(\mathbf{x},\boldsymbol{\theta})$ and $\mathbf{g}(\mathbf{x},\boldsymbol{\theta})$ and write them as $f(\mathbf{x})$ and $\mathbf{g}(\mathbf{x})$, respectively.
Let $F:\mathcal{U}\times \mathcal{X}\rightarrow \mathbb{R}$ denote the approximate function of the objective function $f$, 
and let $\mathbf{G}:\mathcal{U}\times \mathcal{X}\rightarrow \mathbb{R}^{r}$ denote the approximate function of the inequality constraint function $\mathbf{g}$.
These approximate functions are basic components of SCA-based algorithms \cite{scutari2017parallel,scutari2018parallel}, which will be used in this paper to design methods and algorithms.
We make the following assumptions on $F$ and $\mathbf{G}$, respectively.\footnote{Please see \cite{scutari2017parallel} for detailed explanations of Assumptions~\ref{SCA-assump-surrogate-object} and \ref{SCA-assump-surrogate-constriant} and examples of qualified approximate functions.}

\begin{assumption}[{Approximate Function $F$ \cite[Assumption~2]{scutari2017parallel}}]\label{SCA-assump-surrogate-object}
	The function $F:\mathcal{U}\times \mathcal{X}\rightarrow \mathbb{R}$ is continuously~differentiable with respect to (w.r.t.) the first argument and such~that:
	\begin{enumerate}
		\item $F(\cdot; \mathbf{y})$ is uniformly strongly convex on $\mathcal{X}$ with constant $c_{F}>0$, i.e. for all $\mathbf{x}, \mathbf{x}'\in \mathcal{X}$, and $\mathbf{y}\in \mathcal{X}$,
		\begin{align*}
			(\mathbf{x}- \mathbf{x}')^{T}\left(\nabla_{\mathbf{x}}F(\mathbf{x}; \mathbf{y})- \nabla_{\mathbf{x}}F(\mathbf{x}'; \mathbf{y})\right)\geq c_{F}\left\rVert\mathbf{x}- \mathbf{x}'\right\rVert_{2}^{2};
		\end{align*}
		
		\item $\nabla_{\mathbf{x}}F(\mathbf{y}; \mathbf{y})=\nabla f(\mathbf{y})$ for all $\mathbf{y}\in \mathcal{X}$;
		
		\item $\nabla_{\mathbf{x}}F(\cdot; \cdot)$ is continuous on $\mathcal{U}\times \mathcal{X}$.
	\end{enumerate}
\end{assumption}

\begin{assumption}[{Approximate Function $\mathbf{G}$ \cite[Assumption~3]{scutari2017parallel}}]\label{SCA-assump-surrogate-constriant}
	The vector function $\mathbf{G}:\mathcal{U}\times \mathcal{X}\rightarrow \mathbb{R}^{r}$ satisfies:
	\begin{enumerate}
		\item $\mathbf{G}(\cdot; \mathbf{y})$ is convex on $\mathcal{X}$ for all $\mathbf{y}\in \mathcal{X}$;
		
		\item $\mathbf{G}(\mathbf{y}; \mathbf{y})= \mathbf{g}(\mathbf{y})$ for all $\mathbf{y}\in \mathcal{X}$;
		
		\item $\mathbf{G}(\mathbf{x}; \mathbf{y})- \mathbf{g}(\mathbf{x})\succeq \mathbf{0}$ for all $\mathbf{x}, \mathbf{y}\in \mathcal{X}$;
		
		\item $\mathbf{G}(\cdot; \cdot)$ is continuous on $\mathcal{X}\times \mathcal{X}$;
		
		\item $\nabla_{\mathbf{x}}\mathbf{G}(\mathbf{y}; \mathbf{y})=\nabla_{\mathbf{x}}\mathbf{g}(\mathbf{y})$ for all $\mathbf{y}\in \mathcal{X}$;
		
		\item $\nabla_{\mathbf{x}}\mathbf{G}(\cdot; \cdot)$ is continuous on $\mathcal{X}\times \mathcal{X}$.
	\end{enumerate}
\end{assumption}

\section{Primal Decomposition}\label{primal section}
In this section, we consider a nonconvex problem with coupling variables and propose a new primal decomposition method and a corresponding algorithm for this~problem, referred to as PD-M and \mbox{PD-A}, respectively.

\subsection{Problem Formulation}\label{primal section-problem}
Consider the following nonconvex problem:
\begin{align}
	\mathcal{P}^{pd}:\ \min_{\mathbf{x}, \mathbf{y}} \ &f(\mathbf{x},\mathbf{y})\triangleq f_0(\mathbf{y})+\sum_{i\in \mathcal{I}}f_i(\mathbf{x}_i,\mathbf{y}) \nonumber\\  
	s.t. \ 
	&\tilde{\mathbf{g}}_i(\mathbf{x}_i,\mathbf{y})\preceq \mathbf{0},\ i\in \mathcal{I},  \label{primal coup-ineq-cons}\\
	&\tilde{\mathbf{h}}_i(\mathbf{x}_i,\mathbf{y})= \mathbf{0},\ i\in \mathcal{I},  \label{primal coup-eq-cons}\\
	&\mathbf{g}_{i}(\mathbf{x}_{i})\preceq \mathbf{0},\ i\in \mathcal{I},  &&\label{primal decoup-ineq-cons-x}\\
	&\mathbf{x}_{i}\in \mathcal{X}_i,\ i\in \mathcal{I},  \label{primal decoup-cvx-cons-x}\\
	&\mathbf{g}_0(\mathbf{y})\preceq \mathbf{0}, \label{primal decoup-ineq-cons-y}\\ 
	&\mathbf{y}\in\mathcal{Y},  \label{primal decoup-cvx-cons-y}       
\end{align}        
where $\mathbf{x}\triangleq(\mathbf{x}_1,\mathbf{x}_2,\cdots,\mathbf{x}_{I})\in \mathbb{R}^{n}$ with $\mathbf{x}_i\in \mathbb{R}^{n_i}$, $i\in \mathcal{I}$, 
$\mathbf{y}\in \mathbb{R}^{n_0}$, 
$f_i:\mathcal{U}_i\times \mathcal{V}\rightarrow \mathbb{R}$,
$\tilde{\mathbf{g}}_i:\mathcal{U}_i\times \mathcal{V}\rightarrow \mathbb{R}^{\tilde{r}_i}$, 
$\tilde{\mathbf{h}}_i:\mathcal{U}_i\times \mathcal{V}\rightarrow \mathbb{R}^{\tilde{m}_i}$
$\mathbf{g}_{i}:\mathcal{U}_{i}\rightarrow \mathbb{R}^{r_i}$,
$i\in \mathcal{I}$,
$f_0:\mathcal{V}\rightarrow \mathbb{R}$,
and $\mathbf{g}_0:\mathcal{V}\rightarrow \mathbb{R}^{r_0}$. Note that $\tilde{\mathbf{h}}_i$ for all $i\in\mathcal{I}$ is generally not affine in $(\mathbf{x}_{i},\mathbf{y})$.\footnote{If $\tilde{\mathbf{h}}_{i}$ for all $i\in\mathcal{I}$ is affine in $(\mathbf{x}_{i},\mathbf{y})$, then the generally nonlinear equality constraints in (\ref{primal coup-eq-cons}) reduce to the linear equality constraints in (\ref{sp coup-eq-cons}).}

\begin{assumption}[Assumptions on Problem $\mathcal{P}^{pd}$]\label{primal assump-p}
1) For all $i\in \mathcal{I}$, $\mathcal{X}_i$ is a nonempty, closed, and convex set that belongs to the open set $\mathcal{U}_i\subseteq\mathbb{R}^{n_i}$; 
2) $\mathcal{Y}$ is a nonempty, closed, and convex set that belongs to the open set $\mathcal{V}\subseteq\mathbb{R}^{n_0}$; 
3) For all $i\in \mathcal{I}$, $f_i$ is continuously differentiable on $\mathcal{U}_i\times \mathcal{V}$,  
and $\nabla f_i$ is Lipschitz continuous on $\mathcal{X}_i\times \mathcal{Y}$; 
4) $f_0$ is continuously differentiable on $\mathcal{V}$,   
and $\nabla f_0$ is Lipschitz continuous on $\mathcal{Y}$; 
5) For all $i\in \mathcal{I}$, $\tilde{\mathbf{g}}_i$ is continuously differentiable on $\mathcal{U}_i\times \mathcal{V}$;
6) For all $i\in \mathcal{I}$, $\tilde{\mathbf{h}}_i$ is continuously differentiable on $\mathcal{U}_i\times \mathcal{V}$; 
7) For all $i\in \mathcal{I}$, $\mathbf{g}_{i}$ is continuously differentiable on $\mathcal{U}_{i}$;  
8) $\mathbf{g}_0$ is continuously differentiable on $\mathcal{V}$;  
9) $f$ is bounded below.
\end{assumption}

Once $\mathbf{y}$ is fixed, both the objective and constraints of Problem $\mathcal{P}^{pd}$ decouple in $\mathbf{x}_{i}$, $i\in\mathcal{I}$.
Thus, $\mathbf{y}$ is the coupling variable of Problem $\mathcal{P}^{pd}$.
Following the primal decomposition method initially proposed for convex problems \cite{palomar2006tutorial, boyd2007notes}, we fix $\mathbf{y}$ and define the (primal) subproblems as follows: for $i\in\mathcal{I}$,
\begin{align}
		\mathcal{P}_{sub, i}^{pd}:\ \min_{\mathbf{x}_i} \ &f_i(\mathbf{x}_i,\mathbf{y}) \nonumber\\  
		s.t. \
		&\tilde{\mathbf{g}}_i(\mathbf{x}_i,\mathbf{y})\preceq \mathbf{0},  \label{primal sub-coup-ineq-cons}\\
		&\tilde{\mathbf{h}}_i(\mathbf{x}_i,\mathbf{y})= \mathbf{0},  \label{primal sub-coup-eq-cons}\\
		&\mathbf{g}_{i}(\mathbf{x}_{i})\preceq \mathbf{0}, \label{primal sub-decoup-ineq-cons-x}\\
		&\mathbf{x}_{i}\in \mathcal{X}_i.  \label{primal sub-decoup-cvx-cons-x}       
	\end{align} 
Then, Problem $\mathcal{P}^{pd}$ is equivalent to the following master (primal) problem:
\begin{align}
		\mathcal{P}^{pd}_{mas}:\ \min_{\mathbf{y}} \ &f^*(\mathbf{y})\triangleq f_0(\mathbf{y})+\sum_{i\in \mathcal{I}}f_i(\mathbf{x}^*_i(\mathbf{y}), \mathbf{y})\nonumber\\
		s.t. \
		&\text{\eqref{primal decoup-ineq-cons-y}, \eqref{primal decoup-cvx-cons-y}},\nonumber
	\end{align} 
where $\mathbf{x}^*_i(\mathbf{y})$ represents an optimal point of the subproblem $\mathcal{P}_{sub, i}^{pd}$ for $\mathbf{y}\in \operatorname{dom} f^*\triangleq\{\mathbf{y}\in \mathcal{V}\ | \ 
	\tilde{\mathbf{g}}_i(\mathbf{x}_i,\mathbf{y})\preceq \mathbf{0},
	\tilde{\mathbf{h}}_i(\mathbf{x}_i,\mathbf{y})= \mathbf{0}, \mathbf{g}_{i}(\mathbf{x}_{i})\preceq \mathbf{0},
	\ \text{for some}\ \mathbf{x}_{i} \in \mathcal{X}_{i}, \ \text{for all} \ i\in\mathcal{I}
	\}$.
Note that $\mathbf{y}\in\operatorname{dom} f^*$ guarantees that for all $i\in\mathcal{I}$, the subproblem $\mathcal{P}_{sub, i}^{pd}$ is feasible, and hence $\mathbf{x}_{i}^{*}(\mathbf{y})$ exists and $f^*(\mathbf{y})<+\infty$.
Each subproblem $\mathcal{P}_{sub, i}^{pd}$ for $\mathbf{y}\in \operatorname{dom} f^*$ may have multiple optimal points but a unique optimal value, and hence $f^*(\mathbf{y})$ is unique.

Since Problem $\mathcal{P}^{pd}$ is nonconvex, all subproblems $\mathcal{P}_{sub, i}^{pd}$, $i\in\mathcal{I}$ 
are generally nonconvex (or cannot be shown to be convex).
Thus, it is typically challenging to obtain an optimal point $\mathbf{x}^*_i(\mathbf{y})$ of the subproblem $\mathcal{P}_{sub, i}^{pd}$ for all $i\in\mathcal{I}$ and the objective value $f^*(\mathbf{y})$ of the master problem $\mathcal{P}^{pd}_{mas}$ at $\mathbf{y}\in\operatorname{dom} f^*$. 
Besides, the nonuniqueness of a subproblem $\mathcal{P}_{sub, i}^{pd}$'s optimal Lagrange multipliers associated with the constraints in \eqref{primal sub-coup-ineq-cons} and \eqref{primal sub-coup-eq-cons}
and the nonconvexity of $f_{i}$, $i\in\mathcal{I}\cup\{0\}$ can lead to the nondifferentiability and nonconvexity of $f^*$, respectively.
Thus, gradient-based methods and subgradient-based methods (which are applicable only for convex problems) no longer work for the generally nonconvex master problem $\mathcal{P}^{pd}_{mas}$.
Consequently, the primal decomposition method (given by the subproblems $\mathcal{P}_{sub, i}^{pd}$, $i\in\mathcal{I}$ and the master problem $\mathcal{P}^{pd}_{mas}$) and the subgradient-based primal decomposition algorithm for convex problems in \cite{boyd2007notes,palomar2006tutorial} are not applicable to nonconvex Problem $\mathcal{P}^{pd}$.

\subsection{PD-M and Its Theoretical Analysis}\label{primal section-method}
To address the issues discussed above, we propose PD-M for Problem $\mathcal{P}^{pd}$. Specifically, we approximate the master problem $\mathcal{P}^{pd}_{mas}$ with the following problem:
\begin{equation*}\label{primal mas appro}
	\begin{aligned}
		\mathcal{P}^{pd, \dag}_{mas}:\ \min_{\mathbf{y}} \ &f^{\dag}(\mathbf{y})\triangleq f_0(\mathbf{y})+		
		f_{\mathcal{I}}^{\dagger}(\mathbf{y})
		\\ 
		s.t. \
		&\text{\eqref{primal decoup-ineq-cons-y}, \eqref{primal decoup-cvx-cons-y}},
	\end{aligned} 
\end{equation*}
where  $f_{\mathcal{I}}^{\dagger}(\mathbf{y})\triangleq\sum_{i\in \mathcal{I}} f_{i}^{\dagger}(\mathbf{y})$ with $f_{i}^{\dagger}(\mathbf{y})\triangleq f_i(\mathbf{x}^{\dag}_{i}(\mathbf{y}), \mathbf{y})$ and $\mathbf{x}^{\dag}_i(\mathbf{y})$ being a (selected) stationary point of the subproblem $\mathcal{P}_{sub, i}^{pd}$ for $\mathbf{y}\in\operatorname{dom}f^{\dagger}=\operatorname{dom}f^*$.
Note that each subproblem $\mathcal{P}_{sub, i}^{pd}$ for  $\mathbf{y}\in\operatorname{dom}f^{\dagger}$ may have multiple stationary points, and hence $f^{\dag}(\mathbf{y})$ may change with the selected stationary points, which is different from $f^*(\mathbf{y})$.
We also call Problem $\mathcal{P}^{pd, \dag}_{mas}$ the master problem.
The proposed PD-M for Problem $\mathcal{P}^{pd}$ is given by the subproblems $\mathcal{P}_{sub, i}^{pd}$, $i\in\mathcal{I}$ and the master problem $\mathcal{P}^{pd, \dag}_{mas}$.

Next, we verify the effectiveness of PD-M by analyzing the relationship between the stationary points of the master problem $\mathcal{P}^{pd, \dag}_{mas}$ and Problem $\mathcal{P}^{pd}$ under the following assumptions.

\begin{assumption}\label{primal assump-stationary-1}
	There exists an open set $\mathcal{N}\subseteq\operatorname{dom} f^{\dagger}$ and single-valued continuously differentiable functions $\mathbf{X}^{\dag}_i:\mathcal{N}\rightarrow \mathcal{U}_{i}$, $\widetilde{\mathbf{M}}_i:\mathcal{N}\rightarrow \mathbb{R}_+^{\tilde{r}_i}$,
	$\tilde{\boldsymbol{\Lambda}}_i:\mathcal{N}\rightarrow \mathbb{R}^{\tilde{m}_i}$,
	$\mathbf{M}_i:\mathcal{N}\rightarrow \mathbb{R}_+^{r_i}$,  $i\in\mathcal{I}$ such that for all $i\in\mathcal{I}$ and $\mathbf{y}\in \mathcal{N}$, $\mathbf{X}^{\dag}_i(\mathbf{y})\in\operatorname{int}(\mathcal{X}_{i})$ is a stationary point of the subproblem $\mathcal{P}_{sub, i}^{pd}$, and $\widetilde{\mathbf{M}}_i(\mathbf{y})$, $\tilde{\boldsymbol{\Lambda}}_i(\mathbf{y})$,  and $\mathbf{M}_i(\mathbf{y})$ are the Lagrange multipliers associated with the constraints in \eqref{primal sub-coup-ineq-cons}, \eqref{primal sub-coup-eq-cons}, and \eqref{primal sub-decoup-ineq-cons-x}, respectively.
\end{assumption}

If for all $i\in\mathcal{I}$ and $\mathbf{y}\in\operatorname{dom}f^{\dagger}$, we can obtain a closed-form stationary point $\mathbf{x}_{i}^{\dagger}(\mathbf{y})$ of the subproblem $\mathcal{P}^{pd}_{sub,i}$ and the related closed-form Lagrange multipliers $\tilde{\boldsymbol{\mu}}_i(\mathbf{y})$,  $\tilde{\boldsymbol{\lambda}}_i(\mathbf{y})$, and $\boldsymbol{\mu}_i(\mathbf{y})$, then it is relatively easy to check Assumption~\ref{primal assump-stationary-1}.
If not, we provide an alternative assumption in the following, for which it is sufficient to obtain a numerical stationary point of Problem $\mathcal{P}^{pd}$ and its numerical Lagrange multipliers.

\begin{myassumptionprimal}\label{primal assump-stationary-2}
1) $f_i$, $\tilde{\mathbf{g}}_i$, $\tilde{\mathbf{h}}_i$, and $\mathbf{g}_i$, $i\in\mathcal{I}$ are twice continuously differentiable;
2) There exists a stationary point $(\mathbf{x}^{\ddagger},\mathbf{y}^{\ddagger})\in \prod_{i\in\mathcal{I}}\operatorname{int}(\mathcal{X}_{i})\times \mathcal{Y}$ of Problem $\mathcal{P}^{pd}$ with Lagrange multipliers $\tilde{\boldsymbol{\mu}}^{\ddagger}_i$,   $\tilde{\boldsymbol{\lambda}}^{\ddagger}_i$, $\boldsymbol{\mu}^{\ddagger}_i$, $i\in\mathcal{I}$ associated with the constraints in \eqref{primal coup-ineq-cons}, \eqref{primal coup-eq-cons}, and \eqref{primal decoup-ineq-cons-x}, respectively, such that for all $i\in\mathcal{I}$, the gradient of the subproblem $\mathcal{P}^{pd}_{sub,i}$'s KKT function $\mathbf{k}_{i}$ for $\mathbf{y}\in\operatorname{dom}f^{\dagger}$ 
w.r.t. $(\mathbf{x}_i,\tilde{\boldsymbol{\mu}}_i, \tilde{\boldsymbol{\lambda}}_i, \boldsymbol{\mu}_i)$ at $(\mathbf{x}_i^{\ddagger},  \tilde{\boldsymbol{\mu}}^{\ddagger}_i,   \tilde{\boldsymbol{\lambda}}^{\ddagger}_i,  \boldsymbol{\mu}^{\ddagger}_i, \mathbf{y}^{\ddagger})$, i.e., $\nabla_{(\mathbf{x}_i,\tilde{\boldsymbol{\mu}}_i,  \tilde{\boldsymbol{\lambda}}_i, \boldsymbol{\mu}_i)} \mathbf{k}_i(\mathbf{x}_i^{\ddagger},  \tilde{\boldsymbol{\mu}}^{\ddagger}_i,    \tilde{\boldsymbol{\lambda}}^{\ddagger}_i, \boldsymbol{\mu}^{\ddagger}_i, \mathbf{y}^{\ddagger})$, is invertible.
\end{myassumptionprimal}

In practice, we usually do not explicitly verify Assumption~\ref{primal assump-stationary-2}.2 and observe the effectiveness of PD-M based on numerical results.\footnote{A similar situation occurs in the local convergence analysis of Newton’s method \cite[Section 1.4]{bertsekas2016nonlinear}.} 
Now, we present the relationship between Assumptions~\ref{primal assump-stationary-1} and \ref{primal assump-stationary-2} below.

\begin{lemma}\label{primal lem1}
	Suppose that Assumptions~\ref{primal assump-p} and \ref{primal assump-stationary-2} are satisfied.
	Then there exists a neighborhood $\mathcal{N}_{\mathbf{y}^{\ddagger}}$ of $\mathbf{y}^{\ddagger}$ and neighborhoods $\mathcal{N}_{\mathbf{x}_{i}^{\ddagger}}$, $\mathcal{N}_{\tilde{\boldsymbol{\mu}}_{i}^{\ddagger}}$, 
	$\mathcal{N}_{\tilde{\boldsymbol{\lambda}}_{i}^{\ddagger}}$,
	$\mathcal{N}_{\boldsymbol{\mu}_{i}^{\ddagger}}$,  $i\in\mathcal{I}$ of $\mathbf{x}_{i}^{\ddagger}$, $\tilde{\boldsymbol{\mu}}_{i}^{\ddagger}$,  
	$\tilde{\boldsymbol{\lambda}}_{i}^{\ddagger}$,
	$\boldsymbol{\mu}_{i}^{\ddagger}$,  $i\in\mathcal{I}$, respectively, such that
	\begin{enumerate}
		\item Assumption~\ref{primal assump-stationary-1} holds with $\mathcal{N}$ being $\mathcal{N}_{\mathbf{y}^{\ddagger}}$;
		
		\item For all $i\in\mathcal{I}$ and $\mathbf{y}\in \mathcal{N}_{\mathbf{y}^{\ddagger}}$, 
		the subproblem $\mathcal{P}^{pd}_{sub,i}$ has a unique stationary point in $\mathcal{N}_{\mathbf{x}_{i}^{\ddagger}}$ that has Lagrange multipliers associated with the constraints in \eqref{primal sub-coup-ineq-cons}, \eqref{primal sub-coup-eq-cons}, and \eqref{primal sub-decoup-ineq-cons-x}  in $\mathcal{N}_{\tilde{\boldsymbol{\mu}}_{i}^{\ddagger}}$, 
		$\mathcal{N}_{\tilde{\boldsymbol{\lambda}}_{i}^{\ddagger}}$,
		and $\mathcal{N}_{\boldsymbol{\mu}_{i}^{\ddagger}}$, respectively, and the Lagrange multipliers are unique.
	\end{enumerate}
\end{lemma}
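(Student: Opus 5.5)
The plan is to apply the implicit function theorem to each subproblem's KKT function $\mathbf{k}_i$ at the point $(\mathbf{x}_i^{\ddagger},\tilde{\boldsymbol{\mu}}_i^{\ddagger},\tilde{\boldsymbol{\lambda}}_i^{\ddagger},\boldsymbol{\mu}_i^{\ddagger},\mathbf{y}^{\ddagger})$, treating $\mathbf{y}$ as the parameter, and then to check that the implicitly defined solution is a genuine stationary point for nearby $\mathbf{y}$.

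\textbf{Step 1: the reference point is a root of $\mathbf{k}_i$.} Since $(\mathbf{x}^{\ddagger},\mathbf{y}^{\ddagger})$ is a stationary point of $\mathcal{P}^{pd}$ with $\mathbf{x}_i^{\ddagger}\in\operatorname{int}(\mathcal{X}_i)$, the $\mathbf{x}_i$-block of \eqref{KKT-1} for $\mathcal{P}^{pd}$ reduces to the gradient condition \eqref{KKT-5} for $\mathcal{P}^{pd}_{sub,i}$ at $\mathbf{y}=\mathbf{y}^{\ddagger}$. This holds because the objective and the constraints involving $\mathbf{x}_i$ separate across $i$. Together with feasibility and complementary slackness, this gives $\mathbf{k}_i(\mathbf{x}_i^{\ddagger},\tilde{\boldsymbol{\mu}}_i^{\ddagger},\tilde{\boldsymbol{\lambda}}_i^{\ddagger},\boldsymbol{\mu}_i^{\ddagger},\mathbf{y}^{\ddagger})=\mathbf{0}$. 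Hence $\mathbf{x}_i^{\ddagger}$ is a stationary point of $\mathcal{P}^{pd}_{sub,i}$ at $\mathbf{y}^{\ddagger}$, and $\mathbf{y}^{\ddagger}\in\operatorname{dom}f^\dagger$.

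\textbf{Step 2: a local $C^1$ structure for $\mathbf{k}_i$.} The difficulty is that $\mathbf{k}_i$ depends on the active set $\mathcal{A}$, which may change with the point. I will freeze the active set at $\mathcal{A}_i^{\ddagger}=\mathcal{A}_i(\mathbf{x}_i^{\ddagger},\mathbf{y}^{\ddagger})$ and define the smooth map $\bar{\mathbf{k}}_i$. Its $\boldsymbol{\eta}$-block uses the constraint components for $j\in\mathcal{A}_i^{\ddagger}$ and the multiplier components for $j\notin\mathcal{A}_i^{\ddagger}$. By Assumption~\ref{primal assump-stationary-2}.1, $\bar{\mathbf{k}}_i$ is $C^1$, and its Jacobian at the reference point coincides with the invertible matrix of Assumption~\ref{primal assump-stationary-2}.2.

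The implicit function theorem then gives neighborhoods $\mathcal{N}^i_{\mathbf{y}^{\ddagger}}$, $\mathcal{N}_{\mathbf{x}_i^{\ddagger}}$, $\mathcal{N}_{\tilde{\boldsymbol{\mu}}_i^{\ddagger}}$, $\mathcal{N}_{\tilde{\boldsymbol{\lambda}}_i^{\ddagger}}$, $\mathcal{N}_{\boldsymbol{\mu}_i^{\ddagger}}$ and unique $C^1$ maps $(\mathbf{X}^\dagger_i,\widetilde{\mathbf{M}}_i,\tilde{\boldsymbol{\Lambda}}_i,\mathbf{M}_i)$ solving $\bar{\mathbf{k}}_i=\mathbf{0}$. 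I set $\mathcal{N}_{\mathbf{y}^{\ddagger}}=\bigcap_i\mathcal{N}^i_{\mathbf{y}^{\ddagger}}$, which is open since $\mathcal{I}$ is finite.

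\textbf{Step 3 (main obstacle): the implicit solution satisfies the full KKT conditions.} Solving $\bar{\mathbf{k}}_i=\mathbf{0}$ only enforces equalities, so I must recover the sign conditions, feasibility, and membership in $\operatorname{int}(\mathcal{X}_i)$. The tool is continuity after shrinking neighborhoods.

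For inactive indices $j\notin\mathcal{A}_i^{\ddagger}$, the constraint values are strictly negative at the reference point, so they stay negative nearby, and the corresponding multipliers vanish by construction. For active indices the constraints hold with equality by construction, so feasibility holds. The multipliers must additionally be nonnegative. This follows from continuity whenever strict complementarity holds, i.e., $\langle\boldsymbol{\mu}\rangle_j>0$ on the active set.

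I expect to argue that invertibility of $\nabla\mathbf{k}_i$ forces this. Suppose an active multiplier were zero. Then the rows $\nabla g_j$ (from the $\boldsymbol{\eta}$-block) and the column of the stationarity block for that multiplier interact degenerately. If this route fails, I would instead shrink the multiplier neighborhoods $\mathcal{N}_{\tilde{\boldsymbol{\mu}}_i^{\ddagger}}$ and $\mathcal{N}_{\boldsymbol{\mu}_i^{\ddagger}}$ so that they consist of nonnegative vectors, intersecting with the nonnegative orthant as the lemma's codomain $\mathbb{R}_+$ suggests. In that case I would argue, as is implicitly done, that this is consistent with the reference multipliers being in the relative interior on the active set.

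Interiority $\mathbf{X}_i^\dagger(\mathbf{y})\in\operatorname{int}(\mathcal{X}_i)$ follows by continuity from $\mathbf{x}_i^{\ddagger}\in\operatorname{int}(\mathcal{X}_i)$. Therefore \eqref{KKT-5} implies \eqref{KKT-1}, and so $\mathbf{X}_i^\dagger(\mathbf{y})$ is a stationary point. Feasibility of the subproblems gives $\mathcal{N}_{\mathbf{y}^{\ddagger}}\subseteq\operatorname{dom}f^\dagger$, which proves part 1).

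\textbf{Step 4: uniqueness (part 2).} Take any stationary point in $\mathcal{N}_{\mathbf{x}_i^{\ddagger}}$ with multipliers in the given neighborhoods. After shrinking, its active set coincides with $\mathcal{A}_i^{\ddagger}$ on the inactive indices. Its multipliers are zero on indices where the constraint is inactive. Hence it solves $\bar{\mathbf{k}}_i=\mathbf{0}$, and the uniqueness clause of the implicit function theorem identifies it with $(\mathbf{X}^\dagger_i,\widetilde{\mathbf{M}}_i,\tilde{\boldsymbol{\Lambda}}_i,\mathbf{M}_i)(\mathbf{y})$. This yields uniqueness of both the stationary point and its Lagrange multipliers.
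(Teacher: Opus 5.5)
Your frozen-active-set map $\bar{\mathbf{k}}_i$, the use of the implicit function theorem, and your recovery of feasibility, of zero multipliers on inactive indices, and of interiority are all fine. The gap is exactly the step you flag in Step 3, and neither of your two resolutions closes it.

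Route (a) is false. Multiplier values enter $\nabla\bar{\mathbf{k}}_i$ only through the Hessian of the Lagrangian, so invertibility says nothing about whether an active multiplier is positive. Route (b) either presupposes strict complementarity (that is what ``relative interior on the active set'' means) or fails. Shrinking the multiplier neighborhoods does not make the set of $\mathbf{y}$ on which the implicit multipliers are nonnegative into a neighborhood of $\mathbf{y}^{\ddagger}$.

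Concretely, take $I=1$, $f_0\equiv 0$, $f_1(x,y)=x^2$, $\tilde{g}_1(x,y)=x-y$, no other constraints, and $\mathcal{X}_1=\mathcal{Y}=\mathbb{R}$. Then $(x^{\ddagger},y^{\ddagger})=(0,0)$ with $\tilde{\mu}^{\ddagger}=0$ is a stationary point of $\mathcal{P}^{pd}$ with the constraint active. The frozen Jacobian $\begin{pmatrix}2&1\\1&0\end{pmatrix}$ is invertible, so Assumptions~\ref{primal assump-p} and~\ref{primal assump-stationary-2} hold.
\begin{itemize}
\item The implicit solution of $\bar{\mathbf{k}}_1=\mathbf{0}$ is $x=y$, $\tilde{\mu}=-2y$, which is negative for every $y>0$.
\item The true unique stationary point of $\min_x\{x^2: x\le y\}$ is $\min\{0,y\}$ with multiplier $\max\{0,-2y\}$. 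This is not differentiable at $y=0$, so part 1) of the lemma itself fails here.
\item The same example breaks your Step 4. You check agreement with $\mathcal{A}_i^{\ddagger}$ only on the inactive indices, but $\bar{\mathbf{k}}_i=\mathbf{0}$ also requires the reference-active constraints to stay active. For $y>0$ the stationary pair $(x,\tilde{\mu})=(0,0)$ lies in every neighborhood of the reference, yet $\bar{\mathbf{k}}_1=(0,-y)\neq\mathbf{0}$.
\end{itemize}

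What is missing is a strict complementarity hypothesis at the reference point: $\langle\tilde{\boldsymbol{\mu}}_i^{\ddagger}\rangle_j>0$ for all $j\in\tilde{\mathcal{A}}(\mathbf{x}_i^{\ddagger},\mathbf{y}^{\ddagger})$, and $\langle\boldsymbol{\mu}_i^{\ddagger}\rangle_j>0$ for all $j\in\mathcal{A}(\mathbf{x}_i^{\ddagger})$. It cannot be derived from the stated assumptions, so you must add it. With it your argument is complete:
\begin{itemize}
\item In Step 3, nonnegativity follows by continuity.
\item In Step 4, shrink the multiplier neighborhoods so the active components stay positive. Complementary slackness then forces any competing stationary point to keep those constraints active, so it solves $\bar{\mathbf{k}}_i=\mathbf{0}$ and the uniqueness clause of the implicit function theorem applies.
\end{itemize}

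The paper's proof takes the same implicit-function route but skips this issue. It asserts that $\tilde{\mathcal{A}}(\mathbf{x}_i,\mathbf{y})$ and $\mathcal{A}(\mathbf{x}_i)$ are constant, and the inequality constraints hold, on a whole neighborhood of the reference point. That is false whenever some inequality is active, since invertibility forces the active constraint gradients to be nonzero. It also simply takes $\mathcal{N}_{\tilde{\boldsymbol{\mu}}_i^{\ddagger}}\subseteq\mathbb{R}_+^{\tilde{r}_i}$ and $\mathcal{N}_{\boldsymbol{\mu}_i^{\ddagger}}\subseteq\mathbb{R}_+^{r_i}$. Your freezing of the active set is the right way to make that argument rigorous, but state strict complementarity as an extra hypothesis rather than trying to prove it.
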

\begin{IEEEproof}
	See Appendix~\ref{appendix primal lem1}.
\end{IEEEproof}

Lemma~\ref{primal lem1}.1 shows that Assumption~\ref{primal assump-stationary-2} together with Assumption~\ref{primal assump-p} implies Assumption~\ref{primal assump-stationary-1}.
Lemma~\ref{primal lem1}.2 indicates the uniqueness of each subproblem $\mathcal{P}^{pd}_{sub,i}$'s stationary point and its Lagrange multipliers, implying that for all $i\in\mathcal{I}$ and $\mathbf{y}\in \mathcal{N}_{\mathbf{y}^{\ddagger}}$, Linear Independence Constraint Qualification (LICQ) holds at the stationary point $\mathbf{X}_{i}^{\dagger}(\mathbf{y})$ of the subproblem $\mathcal{P}^{pd}_{sub,i}$ with $\mathbf{X}_{i}^{\dagger}$ specified in Lemma~\ref{primal lem1}.1.
Under LICQ, the KKT system of the stationary point $\mathbf{X}_{i}^{\dagger}(\mathbf{y})\in\operatorname{int}(\mathcal{X}_{i})$ has a unique solution, providing an efficient method for calculating Lagrange multipliers of a given stationary point.

\begin{theorem}\label{primal thm1}
	Suppose that Assumptions~\ref{primal assump-p} and \ref{primal assump-stationary-1} (or \ref{primal assump-stationary-2}) are satisfied.
	Let $\mathcal{N}$ (or $\mathcal{N}_{\mathbf{y}^{\ddagger}}$) denote the open set specified in Assumption~\ref{primal assump-stationary-1} (or Lemma~\ref{primal lem1}.1).
	Then the following results~hold.
	\begin{enumerate}
		\item  $f^{\dag}_{\mathcal{I}}$, where $\mathbf{x}^{\dag}_{i}(\mathbf{y})=\mathbf{X}^{\dag}_{i}(\mathbf{y})$ with $\mathbf{X}_i^{\dag}$ specified in Assumption \ref{primal assump-stationary-1} (or Lemma~\ref{primal lem1}.1), is continuously differentiable on $\mathcal{N}$ (or $\mathcal{N}_{\mathbf{y}^{\ddagger}}$), and for all $\mathbf{y}\in \mathcal{N}$ (or $\mathcal{N}_{\mathbf{y}^{\ddagger}}$), the gradient $\nabla f^{\dagger}_{\mathcal{I}}(\mathbf{y})=\sum_{i\in\mathcal{I}} \nabla f_{i}^{\dagger}(\mathbf{y})$,  
		where 
		$\nabla f_{i}^{\dagger}(\mathbf{y})$ is given by 
		\begin{equation}\label{primal mas-obj-grad-y-i}
		\hspace{-0.4cm}	\begin{aligned}
				\nabla f_{i}^{\dagger}(\mathbf{y})= &\nabla_{\mathbf{y}}f_i(\mathbf{X}^{\dagger}_i(\mathbf{y}), \mathbf{y})
				+ \nabla_{\mathbf{y}}\tilde{\mathbf{g}}_i(\mathbf{X}^{\dagger}_i(\mathbf{y}), \mathbf{y})\widetilde{\mathbf{M}}_{i}(\mathbf{y})\\
				&+ \nabla_{\mathbf{y}}\tilde{\mathbf{h}}_i(\mathbf{X}^{\dagger}_i(\mathbf{y}), \mathbf{y})\tilde{\boldsymbol{\Lambda}}_{i}(\mathbf{y}),
			\end{aligned}
		\end{equation}
		with $\widetilde{\mathbf{M}}_i$ and $\tilde{\boldsymbol{\Lambda}}_i$ specified in Assumption~\ref{primal assump-stationary-1} (or Lemma~\ref{primal lem1}.1);

		\item If $\mathbf{y}^{\dag}\in\mathcal{N}$ (or $\mathcal{N}_{\mathbf{y}^{\ddagger}}$) is a stationary point of the master problem $\mathcal{P}^{pd, \dag}_{mas}$, then 	$(\mathbf{X}^{\dag}(\mathbf{y}^{\dag}), \mathbf{y}^{\dag})$ is a stationary point of Problem $\mathcal{P}^{pd}$, where $\mathbf{X}^{\dag}(\mathbf{y}^{\dag})=(\mathbf{X}_1^{\dag}(\mathbf{y}^{\dag}), \cdots, \mathbf{X}^{\dag}_I(\mathbf{y}^{\dag}))$ with $\mathbf{X}^{\dag}_i(\mathbf{y}^{\dag})$ being the stationary point of the subproblem $\mathcal{P}^{pd}_{sub, i}$ and $\mathbf{X}^{\dag}_i$ specified in Assumption \ref{primal assump-stationary-1} (or Lemma~\ref{primal lem1}.1). 
	\end{enumerate}
\end{theorem}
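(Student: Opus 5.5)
Part 1 is an envelope-type computation. By Assumption~\ref{primal assump-stationary-1} (or Lemma~\ref{primal lem1}.1, which reduces the alternative case to it with $\mathcal{N}=\mathcal{N}_{\mathbf{y}^{\ddagger}}$), the maps $\mathbf{X}^{\dag}_i$, $\widetilde{\mathbf{M}}_i$, $\tilde{\boldsymbol{\Lambda}}_i$, $\mathbf{M}_i$ are $C^1$ on $\mathcal{N}$. Hence $f_i^{\dagger}(\mathbf{y})=f_i(\mathbf{X}^{\dag}_i(\mathbf{y}),\mathbf{y})$ is $C^1$ by the chain rule, with
\[
\nabla f_i^{\dagger}(\mathbf{y})=\nabla_{\mathbf{y}}f_i+\nabla\mathbf{X}^{\dag}_i(\mathbf{y})\nabla_{\mathbf{x}_i}f_i .
\]
Since $\mathbf{X}^{\dag}_i(\mathbf{y})\in\operatorname{int}(\mathcal{X}_i)$, the stationarity condition reduces to the form \eqref{KKT-5}:
\[
\nabla_{\mathbf{x}_i}f_i=-\nabla_{\mathbf{x}_i}\tilde{\mathbf{g}}_i\widetilde{\mathbf{M}}_i-\nabla_{\mathbf{x}_i}\tilde{\mathbf{h}}_i\tilde{\boldsymbol{\Lambda}}_i-\nabla\mathbf{g}_i\mathbf{M}_i .
\]
Substituting this leaves three cross terms, which I eliminate by differentiating identities that hold on all of $\mathcal{N}$.

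\begin{itemize}
\item \textbf{Equality term.} Differentiating $\tilde{\mathbf{h}}_i(\mathbf{X}^{\dag}_i(\mathbf{y}),\mathbf{y})\equiv\mathbf{0}$ gives $\nabla\mathbf{X}^{\dag}_i\nabla_{\mathbf{x}_i}\tilde{\mathbf{h}}_i=-\nabla_{\mathbf{y}}\tilde{\mathbf{h}}_i$. So this term becomes $+\nabla_{\mathbf{y}}\tilde{\mathbf{h}}_i\tilde{\boldsymbol{\Lambda}}_i$.
\item \textbf{Coupling inequality term.} Differentiate the complementarity identity $\widetilde{\mathbf{M}}_i(\mathbf{y})\odot\tilde{\mathbf{g}}_i(\mathbf{X}^{\dag}_i(\mathbf{y}),\mathbf{y})\equiv\mathbf{0}$ componentwise. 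With $\phi_j(\mathbf{y})=\langle\tilde{\mathbf{g}}_i(\mathbf{X}^{\dag}_i(\mathbf{y}),\mathbf{y})\rangle_j$, this gives $\nabla\langle\widetilde{\mathbf{M}}_i\rangle_j\,\phi_j+\langle\widetilde{\mathbf{M}}_i\rangle_j\nabla\phi_j=\mathbf{0}$. At points where $\phi_j=0$ the first summand vanishes, so $\langle\widetilde{\mathbf{M}}_i\rangle_j\nabla\phi_j=\mathbf{0}$. At points where $\phi_j<0$ we have $\langle\widetilde{\mathbf{M}}_i\rangle_j=0$, so the product is trivially zero. Summing over $j$ yields $\nabla\mathbf{X}^{\dag}_i\nabla_{\mathbf{x}_i}\tilde{\mathbf{g}}_i\widetilde{\mathbf{M}}_i=-\nabla_{\mathbf{y}}\tilde{\mathbf{g}}_i\widetilde{\mathbf{M}}_i$.
\item \textbf{Local inequality term.} The same argument with $\mathbf{g}_i(\mathbf{X}^{\dag}_i(\mathbf{y}))$ gives $\nabla\mathbf{X}^{\dag}_i\nabla\mathbf{g}_i\mathbf{M}_i=\mathbf{0}$, since $\mathbf{g}_i$ has no direct $\mathbf{y}$-dependence.
\end{itemize}

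Collecting the three terms yields \eqref{primal mas-obj-grad-y-i}, and summing over $i$ gives $\nabla f^{\dagger}_{\mathcal{I}}$. The main obstacle is the complementarity step: the active set may change with $\mathbf{y}$, so one cannot simply differentiate the active constraints. The pointwise case split above handles this. It uses only differentiability of the multipliers and of the product identity, which is zero on the whole open set $\mathcal{N}$.

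Part 2 assembles KKT conditions. Let $\mathbf{y}^{\dag}\in\mathcal{N}$ be stationary for $\mathcal{P}^{pd,\dag}_{mas}$. Then there is $\boldsymbol{\mu}_0\succeq\mathbf{0}$, complementary with $\mathbf{g}_0(\mathbf{y}^{\dag})$, such that
\[
\bigl(\nabla f_0+\textstyle\sum_i\nabla f_i^{\dagger}+\nabla\mathbf{g}_0\boldsymbol{\mu}_0\bigr)^T(\mathbf{y}'-\mathbf{y}^{\dag})\ge0
\]
for all $\mathbf{y}'\in\mathcal{Y}$. Take the candidate multipliers for $\mathcal{P}^{pd}$ at $(\mathbf{X}^{\dag}(\mathbf{y}^{\dag}),\mathbf{y}^{\dag})$ to be $\widetilde{\mathbf{M}}_i(\mathbf{y}^{\dag})$, $\tilde{\boldsymbol{\Lambda}}_i(\mathbf{y}^{\dag})$, $\mathbf{M}_i(\mathbf{y}^{\dag})$ and $\boldsymbol{\mu}_0$.

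\begin{itemize}
\item \textbf{$\mathbf{x}_i$-block.} The gradient of the Lagrangian vanishes by subproblem stationarity at the interior point. Hence the variational inequality over $\mathcal{X}_i$ holds with equality.
\item \textbf{$\mathbf{y}$-block.} Substituting \eqref{primal mas-obj-grad-y-i} shows that the $\mathbf{y}$-gradient of the full Lagrangian is exactly $\nabla f_0+\sum_i\nabla f_i^{\dagger}+\nabla\mathbf{g}_0\boldsymbol{\mu}_0$. The master variational inequality is therefore the required one over $\mathcal{Y}$.
\item \textbf{Remaining conditions.} Feasibility, sign constraints and complementarity are inherited from the subproblems and from the master problem.
\end{itemize}

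Since the product $\prod_i\mathcal{X}_i\times\mathcal{Y}$ is a product set, the block-wise variational inequalities combine into the joint one. This gives \eqref{KKT-1}--\eqref{KKT-4} for $\mathcal{P}^{pd}$.
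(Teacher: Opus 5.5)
Your proposal is correct and follows essentially the paper's route: it reduces the alternative assumption to Assumption~\ref{primal assump-stationary-1} via Lemma~\ref{primal lem1}, uses the chain rule plus interior-point subproblem stationarity for Part~1, and assembles block-wise KKT conditions for Part~2. The only difference is cosmetic. The paper rewrites $f_i^{\dagger}$ as the subproblem Lagrangian and shows $\nabla\widetilde{\mathbf{M}}_i(\mathbf{y})\tilde{\mathbf{g}}_i(\mathbf{X}^{\dagger}_i(\mathbf{y}),\mathbf{y})=\mathbf{0}$ by a continuity argument on inactive constraints. You instead differentiate $f_i$ directly and remove the cross terms by applying the product rule to the feasibility and complementarity identities. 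This is an equivalent computation, and your pointwise case split is valid without that neighborhood argument.
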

\begin{IEEEproof}
	See Appendix~\ref{appendix primal thm1}.
\end{IEEEproof}

Theorem~\ref{primal thm1}.1 provides a method for calculating the gradient $\nabla f_{\mathcal{I}}^{\dag}(\mathbf{y})$, which does not require the analytical expression of $f_{\mathcal{I}}^{\dag}$. 
In addition, Theorem~\ref{primal thm1}.1 also implies that $f^{\dag}$ is continuously differentiable on $\mathcal{N}$ (or $\mathcal{N}_{\mathbf{y}^{\ddagger}}$), which is a necessary condition for gradient-based algorithms that can be used to solve the master problem $\mathcal{P}^{pd, \dag}_{mas}$.
Theorem~\ref{primal thm1}.2 indicates that 
a stationary point of Problem $\mathcal{P}^{pd}$ can be obtained from a stationary point $\mathbf{y}^{\dag}$ of the master problem $\mathcal{P}^{pd, \dag}_{mas}$ and stationary points $\mathbf{x}^{\dag}_i(\mathbf{y}^{\dag})$, $i\in\mathcal{I}$ of the subproblems $\mathcal{P}_{sub, i}^{pd}$, $i\in\mathcal{I}$, showcasing the effectiveness of PD-M in producing stationary points of Problem $\mathcal{P}^{pd}$.

\subsection{PD-A and Its Local Convergence Analysis}\label{primal section-algorithm}
By Theorem~\ref{primal thm1}, the key to designing a corresponding primal decomposition algorithm for Problem $\mathcal{P}^{pd}$ is to design a master algorithm for the master problem $\mathcal{P}^{pd,\dagger}_{mas}$. 
Generally, the closed-form expression for $f_{\mathcal{I}}^{\dag}$ (i.e., the component of the master problem $\mathcal{P}^{pd,\dagger}_{mas}$'s objective function $f^{\dag}$) is hard to obtain, and only local information about $f_{\mathcal{I}}^{\dag}$ (e.g., its value and gradient at a point) is accessible. 
Thus, we need to iteratively solve the subproblems $\mathcal{P}^{pd}_{sub, i}$, $i\in\mathcal{I}$ and update the coupling variable $\mathbf{y}$ based on the  local information accessible from $f_{\mathcal{I}}^{\dag}$.

Based on this, we propose PD-A for Problem $\mathcal{P}^{pd}$, which uses the SCA-based algorithm \cite{scutari2017parallel} to
solve the master problem $\mathcal{P}^{pd,\dagger}_{mas}$.
Specifically, at iteration $k$, for fixed $\mathbf{y}^{(k)}$ (which is obtained at iteration $k-1$), we can divide the algorithm process into two parts.
In the first part, we focus on solving the subproblems $\mathcal{P}_{sub, i}^{pd}$, $i\in\mathcal{I}$ separately. 
First, we obtain an arbitrary stationary point $\mathbf{x}^{\dag}_i(\mathbf{y}^{(k)})$ of the subproblem $\mathcal{P}_{sub, i}^{pd}$ using algorithms for generally nonconvex problems (e.g., Majorization-Minimization (MM) algorithms \cite{sun2017mm}, SCA algorithms \cite{scutari2018parallel}, and Sequential Quadratic Programming (SQP)\cite{gill2011sequential}).\footnote{For a nonconvex problem, once a stationary point is obtained via an iterative algorithm, its performance can be assessed to determine whether it satisfies practical needs. To further enhance the performance, one can find multiple stationary points by running the iterative algorithm multiple times with different initial points, chosen randomly or based on some heuristics, and select the best stationary point among the obtained ones.} 
Then, we can obtain the corresponding Lagrange multipliers $\tilde{\boldsymbol{\mu}}_i(\mathbf{y}^{(k)})$,  $\tilde{\boldsymbol{\lambda}}_i(\mathbf{y}^{(k)})$, and $\boldsymbol{\mu}_i(\mathbf{y}^{(k)})$ by solving the KKT system or KKT conditions (see the discussion below Definition~\ref{def stationary point}).
Note that $\boldsymbol{\mu}_i(\mathbf{y}^{(k)})$ is not utilized in the followed algorithm process but is essential for the local convergence analysis. 
Next, we compute $\nabla f_{i}^{\dag}(\mathbf{y}^{(k)})$ according to \eqref{primal mas-obj-grad-y-i}~where $\mathbf{X}^{\dagger}_i(\mathbf{y}^{(k)})\!=\!\mathbf{x}_{i}^{\dag}(\mathbf{y}^{(k)})$, $\widetilde{\mathbf{M}}_{i}(\mathbf{y}^{(k)})\!=\!\tilde{\boldsymbol{\mu}}_i(\mathbf{y}^{(k)})$, and $\tilde{\boldsymbol{\Lambda}}_{i}(\mathbf{y}^{(k)})\!=\!\tilde{\boldsymbol{\lambda}}_i(\mathbf{y}^{(k)})$.

In the second part, we focus on solving the master problem $\mathcal{P}^{pd, \dag}_{mas}$ by the SCA-based algorithm \cite{scutari2017parallel}. 
First, we compute $\nabla f_{\mathcal{I}}^{\dag}(\mathbf{y}^{(k)})=\sum_{i\in \mathcal{I}}\nabla f_{i}^{\dag}(\mathbf{y}^{(k)})$ 
and choose the following approximate function of $f^{\dag}$ at $\mathbf{y}^{(k)}$:
\begin{align*}\label{primal mas-appro-appro-object}
	F^{\dag}(\mathbf{y}; \mathbf{y}^{(k)})\triangleq F_{0}(\mathbf{y}; \mathbf{y}^{(k)}) + F_{\mathcal{I}}^{\dag}(\mathbf{y}; \mathbf{y}^{(k)}),
\end{align*}
where $F_{0}(\mathbf{y}; \mathbf{y}^{(k)})$ is any approximate function of $f_0$ at $\mathbf{y}^{(k)}$ satisfying Assumption~\ref{SCA-assump-surrogate-object}, and $F_{\mathcal{I}}^{\dag}(\mathbf{y}; \mathbf{y}^{(k)})$ is the approximate function of $f^{\dagger}_{\mathcal{I}}$ at $\mathbf{y}^{(k)}$ (i.e., the second-order Taylor approximation of $f_{\mathcal{I}}^{\dag}$ near $\mathbf{y}^{(k)}$ \cite{scutari2017parallel}), given by:
\begin{align*}
	F_{\mathcal{I}}^{\dag}(\mathbf{y}; \mathbf{y}^{(k)})\triangleq\frac{\tau}{2}\left\Vert\mathbf{y}-\mathbf{y}^{(k)}\right\Vert_2^2 + 
	\nabla f_{\mathcal{I}}^{\dag}(\mathbf{y}^{(k)})^{T}(\mathbf{y}-\mathbf{y}^{(k)}),
\end{align*}
where $\tau\!\geq\!0$.\footnote{
The strong convexity of $F_{0}$ in Assumption~\ref{SCA-assump-surrogate-object}.1 can reduce to convexity when $\tau>0$, since the strong convexity of $F^{\dagger}$ is already guaranteed by the strong convexity of $F^\dagger_{\mathcal{I}}$.}
Notably, $F^{\dagger}$ satisfies Assumption~\ref{SCA-assump-surrogate-object}.
In addition, we choose an approximation of $\mathbf{g}_0$ at $\mathbf{y}^{(k)}$, denoted by $\mathbf{G}_0(\mathbf{y}; \mathbf{y}^{(k)})$, which satisfies Assumption~\ref{SCA-assump-surrogate-constriant}.\footnote{The approximate function $F_{\mathcal{I}}^{\dag}$ of $f_{\mathcal{I}}^{\dag}$ is specified, 
whereas the approximate functions $F_{0}$ of $f_{0}$ and $\mathbf{G}_{0}$ of $\mathbf{g}_{0}$ can be chosen more flexibly.
The reason is that only $f^{\dag}_{\mathcal{I}}$'s local information (e.g., its value and gradient at a point) is known, whereas the expressions of $f_{0}$ and $\mathbf{g}_{0}$ are available.} 
The corresponding approximate problem of the master problem $\mathcal{P}^{pd, \dag}_{mas}$~is:
\begin{align}
	\mathcal{P}^{pd, \dag, (k)}_{mas}:\ \min_{\mathbf{y}} \ &F^{\dag}(\mathbf{y}; \mathbf{y}^{(k)})\nonumber\\
	s.t. \ &\mathbf{G}_0(\mathbf{y}; \mathbf{y}^{(k)})\preceq \mathbf{0},\\
	\ &\text{\eqref{primal decoup-cvx-cons-y}}.\nonumber               
\end{align}
This problem is strongly convex since $F^{\dagger}$ and $\mathbf{G}_{0}$ satisfy Assumptions~\ref{SCA-assump-surrogate-object}.1 and~\ref{SCA-assump-surrogate-constriant}.1, respectively.
Then, we solve Problem $\mathcal{P}^{pd, \dag, (k)}_{mas}$ to obtain its unique optimal point $\mathbf{y}^*(\mathbf{y}^{(k)})$ using algorithms for convex problems (e.g., interior-point methods). 
Next, the coupling variable $\mathbf{y}$ is updated according to:
\begin{equation}\label{primal y-smooth}
	\mathbf{y}^{(k+1)}=\mathbf{y}^{(k)}+\gamma^{(k)}(\mathbf{y}^*(\mathbf{y}^{(k)}) - \mathbf{y}^{(k)}),
\end{equation} 
where $\gamma^{(k)}$ is a step size satisfying \cite{scutari2017parallel}:
\begin{align}\label{sca step size}
	\gamma^{(k)}\in(0,1],\ \lim_{k\rightarrow\infty}\gamma^{(k)} = 0,\ \text{and}\ \sum_{k=0}^{\infty}\gamma^{(k)}=\infty.
\end{align}

\begin{algorithm}[t]
	\caption{PD-A}\label{primal alg:alg1}
	\begin{algorithmic}[1]\label{alg1}\small
		\STATE \textbf{initialization}: Set $k=0$ and choose any feasible point $\mathbf{y}^{(0)}$, $\tau \geq 0$, and  $\{\gamma^{(k)}\}_{k\in\mathbb{N}}\subseteq(0,1]$ satisfying (\ref{sca step size}).
		\REPEAT
		\FOR{all $i\in\mathcal{I}$}
		\STATE Get an arbitrary stationary point $\mathbf{x}^{\dag}_i(\mathbf{y}^{(k)})$ of the generally nonconvex subproblem $\mathcal{P}_{sub, i}^{pd}$.
		\STATE Obtain the corresponding Lagrange multipliers $\tilde{\boldsymbol{\mu}}_i(\mathbf{y}^{(k)})$, 
		$\tilde{\boldsymbol{\lambda}}_i(\mathbf{y}^{(k)})$, and $\boldsymbol{\mu}_i(\mathbf{y}^{(k)})$.
		\STATE Compute $\nabla f_{i}^{\dag}(\mathbf{y}^{(k)})$ according to (\ref{primal mas-obj-grad-y-i}).
		\ENDFOR
		\STATE Compute $\nabla f_{\mathcal{I}}^{\dag}(\mathbf{y}^{(k)})=\sum_{i\in \mathcal{I}}\nabla f_{i}^{\dag}(\mathbf{y}^{(k)})$. 
		\STATE Obtain the unique optimal point $\mathbf{y}^*(\mathbf{y}^{(k)})$ of the convex approximate problem $\mathcal{P}^{pd, \dag, (k)}_{mas}$.
		\STATE Update $\mathbf{y}$ according to (\ref{primal y-smooth}).
		\STATE Set $k \gets k+1$.
		\UNTIL{Some termination criterion is met.} 
	\end{algorithmic}
\end{algorithm}

The detailed procedure is summarized in Algorithm~\ref{primal alg:alg1}.\footnote{In Steps~6, 8, and 10, parallel computations can be applied to conduct matrix (vector) multiplications and additions.}
Note that it is possible that $\mathbf{y}^{(k)}\notin\operatorname{dom} f^{\dagger}$ for some $k$, i.e., there exist infeasible subproblems for some $k$.\footnote{This issue seldom happens in Algorithm~\ref{primal alg:alg1} for Problem $\mathcal{P}^{pd}$ or in the standard primal decomposition algorithm for convex problems \cite{boyd2007notes}. In \cite{boyd2007notes}, the issue is also addressed by additional procedure, which is not presented in the formal algorithm description for ease of exposition.} 
In this case, we skip Steps~3-10 and update $\mathbf{y}^{(k)}$ by $\mathbf{y}^{(k+1)}\!=\!\mathbf{y}^{(k-1)}\!+\!\delta^{(k)}(\mathbf{y}^{(k)}\!-\!\mathbf{y}^{(k-1)})$ with $\delta^{(k)}\!\in\!(0,1]$ such that  $\mathbf{y}^{(k+1)}\!\in\!\operatorname{dom} f^{\dagger}$.

To prove the local convergence of Algorithm~\ref{primal alg:alg1} under Assumptions~\ref{primal assump-p} and \ref{primal assump-stationary-2}, we make the following assumption on $\nabla f^{\dag}(\mathbf{y})$ which exists by Theorem~\ref{primal thm1}.1.

\begin{assumption}\label{primal assump-mas-appro-object}
	$\nabla f^{\dag}(\mathbf{y})$ is Lipschitz continuous on a neighborhood of $\mathbf{y}^{\ddagger}$ with $\mathbf{y}^{\ddagger}$ specified in Assumption~\ref{primal assump-stationary-2}.2.\footnote{
			If functions $f_i$, $\tilde{\mathbf{g}}_i$, $\tilde{\mathbf{h}}_i$, $\mathbf{g}_i$, $i\in\mathcal{I}$ are three times continuously differentiable, 
			then Assumption~\ref{primal assump-mas-appro-object} can be concluded from Assumptions~\ref{primal assump-p}~and~\ref{primal assump-stationary-2}.}
\end{assumption}

Then, based on Theorem~\ref{primal thm1} and \cite[Theorem~2]{scutari2017parallel}, we can show the local convergence of Algorithm~\ref{primal alg:alg1}.

\begin{theorem}\label{primal thm2}
	Suppose that Assumptions~\ref{regularity}-\ref{primal assump-p}, \ref{primal assump-stationary-2}, and \ref{primal assump-mas-appro-object} are satisfied.
	The points $\mathbf{y}^{\ddagger}$, $\mathbf{x}_{i}^{\ddagger}$, $\tilde{\boldsymbol{\mu}}_{i}^{\ddagger}$,  
	$\tilde{\boldsymbol{\lambda}}_{i}^{\ddagger}$, $\boldsymbol{\mu}_{i}^{\ddagger}$,  $i\in\mathcal{I}$ are specified in Assumption~\ref{primal assump-stationary-2}.2.
	The sequences $\{\mathbf{y}^{(k)}\}_{k\in\mathbb{N}}$, 
	$\{\mathbf{x}^{\dag}_i(\mathbf{y}^{(k)})\}_{k\in\mathbb{N}}$,
	$\{\tilde{\boldsymbol{\mu}}_i(\mathbf{y}^{(k)})\}_{k\in\mathbb{N}}$,  
	$\{\tilde{\boldsymbol{\lambda}}_i(\mathbf{y}^{(k)})\}_{k\in\mathbb{N}}$,
	$\{\boldsymbol{\mu}_i(\mathbf{y}^{(k)})\}_{k\in\mathbb{N}}$,  $i\in\mathcal{I}$,
	are generated by Algorithm~\ref{primal alg:alg1}.
	Then, there exists a neighborhood $\mathcal{N}_{\mathbf{y}^{\ddagger}}$ of $\mathbf{y}^{\ddagger}$ and neighborhoods $\mathcal{N}_{\mathbf{x}_{i}^{\ddagger}}$, $\mathcal{N}_{\tilde{\boldsymbol{\mu}}_{i}^{\ddagger}}$, 
	$\mathcal{N}_{\tilde{\boldsymbol{\lambda}}_{i}^{\ddagger}}$,
	$\mathcal{N}_{\boldsymbol{\mu}_{i}^{\ddagger}}$,  $i\in\mathcal{I}$ of $\mathbf{x}_{i}^{\ddagger}$, $\tilde{\boldsymbol{\mu}}_{i}^{\ddagger}$, 
	$\tilde{\boldsymbol{\lambda}}_{i}^{\ddagger}$, $\boldsymbol{\mu}_{i}^{\ddagger}$,  $i\in\mathcal{I}$ such that if $\{\mathbf{y}^{(k)}\}_{k\in\mathbb{N}}\subseteq\mathcal{N}_{\mathbf{y}^{\ddagger}}$,  
	$\{\mathbf{x}^{\dag}_i(\mathbf{y}^{(k)})\}_{k\in\mathbb{N}}\subseteq\mathcal{N}_{\mathbf{x}_{i}^{\ddagger}}$,
	$\{\tilde{\boldsymbol{\mu}}_i(\mathbf{y}^{(k)})\}_{k\in\mathbb{N}}\subseteq\mathcal{N}_{\tilde{\boldsymbol{\mu}}_{i}^{\ddagger}}$, $\{\tilde{\boldsymbol{\lambda}}_i(\mathbf{y}^{(k)})\}_{k\in\mathbb{N}}\subseteq\mathcal{N}_{\tilde{\boldsymbol{\lambda}}_{i}^{\ddagger}}$,  $\{\boldsymbol{\mu}_i(\mathbf{y}^{(k)})\}_{k\in\mathbb{N}}\subseteq\mathcal{N}_{\boldsymbol{\mu}_{i}^{\ddagger}}$,  $i\in\mathcal{I}$,
	then the following results hold:
	\begin{enumerate}
		\item There exists at least one convergent subsequence $\{\mathbf{y}^{(k)}\}_{k\in\mathcal{K}}$ with $\mathcal{K}\subseteq\mathbb{N}$  whose limit point is a stationary point of the master problem $\mathcal{P}^{pd, \dag}_{mas}$.\footnote{We can also show the asymptotic behaviour of the norm between successively generated points, i.e.,
		$\lim_{k\rightarrow \infty} \|\mathbf{y}^{(k+1)}-\mathbf{y}^{(k)}\|_{2}=0$.}
		If, in addition, $\mathcal{Y}$ is compact, $\mathbf{G}_0(\cdot; \cdot)$ is Lipschitz continuous on $\mathcal{Y}\times \mathcal{Y}$, and 
		$\nabla_{\mathbf{y}}F_0(\cdot;\mathbf{z})$ and $\nabla_{\mathbf{y}}F_0(\mathbf{y};\cdot)$ are  uniformly Lipschitz continuous on $\mathcal{Y}$,
		then the limit point of any convergent~subsequence is a stationary point of the master problem~$\mathcal{P}^{pd, \dag}_{mas}$.\footnote{The compactness condition is usually satisfied in practice since the constraint sets of most practical problems are bounded. Besides, examples of $\mathbf{G}_{0}$ satisfying the Lipschitz continuity condition and examples of $F_{0}$ whose gradients $\nabla_{\mathbf{y}}F_0(\cdot;\mathbf{z})$ and $\nabla_{\mathbf{y}}F_0(\mathbf{y};\cdot)$ satisfy the uniformly Lipschitz continuity condition can be found in \cite{scutari2017parallel}.}
		
		\item For all $i\in\mathcal{I}$, the subsequence 
		$\{\mathbf{x}^{\dag}_i(\mathbf{y}^{(k)})\}_{k\in\mathcal{K}}$ with $\mathcal{K}$ specified in 1) converges, and  $(\mathbf{x}^{(\infty)},\mathbf{y}^{(\infty)})$ is a stationary point of Problem $\mathcal{P}^{pd}$, where $\mathbf{x}^{(\infty)}=(\mathbf{x}^{(\infty)}_{1},\cdots, \mathbf{x}^{(\infty)}_{I})$, $\mathbf{x}^{(\infty)}_{i}\in \operatorname{int}(\mathcal{X}_{i})$ is the limit point of $\{\mathbf{x}^{\dag}_i(\mathbf{y}^{(k)})\}_{k\in\mathcal{K}}$, and $\mathbf{y}^{(\infty)}$ is the limit point of $\{\mathbf{y}^{(k)}\}_{k\in\mathcal{K}}$.
	\end{enumerate}	
\end{theorem}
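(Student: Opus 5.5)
The plan is to reduce Theorem~\ref{primal thm2} to the convergence result for SCA, \cite[Theorem~2]{scutari2017parallel}, applied to the master problem $\mathcal{P}^{pd,\dag}_{mas}$ restricted to a neighborhood of $\mathbf{y}^{\ddagger}$. Part~2) will then follow from Theorem~\ref{primal thm1}.2 together with continuity of the implicit maps.

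\textbf{Step 1 (localization).} Invoke Lemma~\ref{primal lem1} to obtain $\mathcal{N}_{\mathbf{y}^{\ddagger}}$ and the neighborhoods $\mathcal{N}_{\mathbf{x}_{i}^{\ddagger}},\mathcal{N}_{\tilde{\boldsymbol{\mu}}_{i}^{\ddagger}},\mathcal{N}_{\tilde{\boldsymbol{\lambda}}_{i}^{\ddagger}},\mathcal{N}_{\boldsymbol{\mu}_{i}^{\ddagger}}$. Shrink $\mathcal{N}_{\mathbf{y}^{\ddagger}}$ if needed so that the Lipschitz property of Assumption~\ref{primal assump-mas-appro-object} holds on it. By Lemma~\ref{primal lem1}.2, the stationary point in $\mathcal{N}_{\mathbf{x}_{i}^{\ddagger}}$ with multipliers in the prescribed neighborhoods is unique. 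Hence, under the hypothesis that all iterates lie in these neighborhoods, the arbitrary stationary points and multipliers returned in Steps~4--5 of Algorithm~\ref{primal alg:alg1} coincide with $\mathbf{X}_i^{\dag}(\mathbf{y}^{(k)}),\widetilde{\mathbf{M}}_i(\mathbf{y}^{(k)}),\tilde{\boldsymbol{\Lambda}}_i(\mathbf{y}^{(k)}),\mathbf{M}_i(\mathbf{y}^{(k)})$. Consequently, Step~6 computes exactly $\nabla f_i^{\dag}(\mathbf{y}^{(k)})$ for the single-valued, continuously differentiable function $f^{\dag}$ of Theorem~\ref{primal thm1}.1. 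This identification is the crux, since it turns the algorithm into a plain SCA iteration on a fixed smooth objective.

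\textbf{Step 2 (verify SCA hypotheses).} The surrogate $F^{\dag}(\cdot;\mathbf{y}^{(k)})=F_0+F_{\mathcal{I}}^{\dag}$ satisfies Assumption~\ref{SCA-assump-surrogate-object}: it is strongly convex, its gradient at $\mathbf{y}^{(k)}$ equals $\nabla f_0(\mathbf{y}^{(k)})+\nabla f^{\dag}_{\mathcal{I}}(\mathbf{y}^{(k)})=\nabla f^{\dag}(\mathbf{y}^{(k)})$, and its gradient is continuous in both arguments because $\nabla f^{\dag}_{\mathcal{I}}$ is continuous by Theorem~\ref{primal thm1}.1. The function $\mathbf{G}_0$ satisfies Assumption~\ref{SCA-assump-surrogate-constriant} by choice. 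Regularity comes from Assumption~\ref{regularity}, and lower boundedness of $f^{\dag}$ comes from Assumption~\ref{primal assump-p}.9, since $f^{\dag}(\mathbf{y})=f(\mathbf{X}^{\dag}(\mathbf{y}),\mathbf{y})$. Lipschitz continuity of $\nabla f^{\dag}$ on the region containing the iterates is Assumption~\ref{primal assump-mas-appro-object}. The step sizes satisfy \eqref{sca step size}.

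\textbf{Step 3 (apply the SCA theorem).} \cite[Theorem~2]{scutari2017parallel} is stated on the whole feasible set, while here smoothness is only local. I will therefore re-run its descent argument, which uses only the descent lemma along segments $[\mathbf{y}^{(k)},\mathbf{y}^{(k+1)}]$ and the optimality of $\mathbf{y}^*(\mathbf{y}^{(k)})$. I expect this to be the main obstacle: one must make sure the segments stay in $\mathcal{N}_{\mathbf{y}^{\ddagger}}$. I will try taking $\mathcal{N}_{\mathbf{y}^{\ddagger}}$ convex (a ball) so that segments between iterates stay inside. The usual argument then gives the following: $f^{\dag}(\mathbf{y}^{(k)})$ eventually decreases by at least a constant times $\gamma^{(k)}\|\mathbf{y}^*(\mathbf{y}^{(k)})-\mathbf{y}^{(k)}\|^2$, so $\liminf\|\mathbf{y}^*(\mathbf{y}^{(k)})-\mathbf{y}^{(k)}\|=0$. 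Because the iterates are bounded (they lie in a bounded neighborhood), some subsequence $\mathcal{K}$ converges. A fixed point of $\mathbf{y}\mapsto\mathbf{y}^*(\mathbf{y})$ is a stationary point of $\mathcal{P}^{pd,\dag}_{mas}$. Under the additional compactness and Lipschitz conditions, the $\limsup$ part of \cite[Theorem~2]{scutari2017parallel} upgrades this to every limit point.

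\textbf{Step 4 (part 2).} Since $\mathbf{y}^{(k)}\to\mathbf{y}^{(\infty)}$ along $\mathcal{K}$ and $\mathbf{X}_i^{\dag}$ is continuous, $\mathbf{x}^{\dag}_i(\mathbf{y}^{(k)})=\mathbf{X}_i^{\dag}(\mathbf{y}^{(k)})\to\mathbf{X}_i^{\dag}(\mathbf{y}^{(\infty)})\in\operatorname{int}(\mathcal{X}_i)$, using Assumption~\ref{primal assump-stationary-1} via Lemma~\ref{primal lem1}.1. It remains to have $\mathbf{y}^{(\infty)}\in\mathcal{N}_{\mathbf{y}^{\ddagger}}$, which I will ensure by working with a slightly smaller closed neighborhood. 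Theorem~\ref{primal thm1}.2 then shows that $(\mathbf{x}^{(\infty)},\mathbf{y}^{(\infty)})$ is stationary for $\mathcal{P}^{pd}$.
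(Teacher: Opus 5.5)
Your proposal follows the paper's proof. Lemma~\ref{primal lem1}.2 identifies the algorithm's stationary points and multipliers with the implicit maps, so Steps~8--10 become an SCA iteration on the smooth master objective $f^{\dag}$; the hypotheses of \cite[Theorem~2]{scutari2017parallel} are then checked; and part~2) follows from continuity of $\mathbf{X}_i^{\dag}$ together with Theorem~\ref{primal thm1}.2.

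The one difference is how you handle the fact that $f^{\dag}$ is only locally smooth. The paper simply asserts that $f^{\dag}$ extends to a $C^1$ function on $\mathcal{V}$ with Lipschitz gradient on $\mathcal{Y}$, and then cites \cite[Theorem~2]{scutari2017parallel} as a black box for both statements. You instead re-run the descent argument on a ball and shrink to a closed sub-neighborhood, so that $\mathbf{y}^{(\infty)}$ stays in the domain of $\mathbf{X}_i^{\dag}$. That is more careful than the paper, but then the ``every limit point'' upgrade must be localized in the same way rather than cited directly.
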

\begin{IEEEproof}
	See Appendix~\ref{primal proof-thm2}.
\end{IEEEproof}

\section{Successive Primal Decomposition}\label{sp section}
In this section, we consider a nonconvex problem with coupling variables and propose a successive primal decomposition method and a corresponding algorithm for this problem, referred to as SPD-M and \mbox{SPD-A}, respectively.

\subsection{Problem Formulation}
Consider the following nonconvex problem:
\begin{align}
	\mathcal{P}^{spd}:\ \min_{\mathbf{x}, \mathbf{y}} \ &f(\mathbf{x}, \mathbf{y})\triangleq f_0(\mathbf{y})+ \sum_{i\in\mathcal{I}}f_i(\mathbf{x}_i, \mathbf{y})\nonumber\\
	s.t. \ 
	&\mathbf{A}_{i,\mathbf{x}}\mathbf{x}_i+\mathbf{A}_{i,\mathbf{y}}\mathbf{y}+ \mathbf{b}_{i}=\mathbf{0},
	\ i\in \mathcal{I}, \label{sp coup-eq-cons}\\
	&\text{\eqref{primal coup-ineq-cons}, \eqref{primal decoup-ineq-cons-x}-\eqref{primal decoup-cvx-cons-y}},      
	\nonumber
\end{align}
where $\mathbf{x}$, $\mathbf{y}$, $f_i$, $\tilde{\mathbf{g}}_i$, $\mathbf{g}_{i}$, $i\in\mathcal{I}$, $f_0$, and $\mathbf{g}_{0}$ are the same as in Problem $\mathcal{P}^{pd}$, 
$\mathbf{A}_{i,\mathbf{x}}\in\mathbb{R}^{\tilde{m}_{i}\times n_i}$, $\mathbf{A}_{i,\mathbf{y}}\in\mathbb{R}^{\tilde{m}_{i}\times n_0}$,  $\mathbf{b}_i\in\mathbb{R}^{\tilde{m}_{i}}$, $i\in\mathcal{I}$.
In contrast, the coupling linear equality constraints of Problem $\mathcal{P}^{spd}$ in \eqref{sp coup-eq-cons} are less general than the coupling (generally) nonlinear equality constraints of Problem $\mathcal{P}^{pd}$ in \eqref{primal coup-eq-cons}.
Thus, Problem $\mathcal{P}^{spd}$ is a special case of Problem $\mathcal{P}^{pd}$. 
Accordingly, by removing 6) in Assumption~\ref{primal assump-p}, we have the following assumptions about Problem $\mathcal{P}^{spd}$.
\begin{assumption}[Assumptions on Problem $\mathcal{P}^{spd}$]\label{sp p}
	Assumptions~\ref{primal assump-p}.1-\ref{primal assump-p}.5 and \ref{primal assump-p}.7-\ref{primal assump-p}.9 hold true.
\end{assumption}

As discussed in Section~\ref{primal section-problem}, the basic primal decomposition method and algorithm for convex problems in \cite{boyd2007notes,palomar2006tutorial} are also not applicable to nonconvex Problem $\mathcal{P}^{spd}$.

\subsection{SPD-M and Its Theoretical Analysis}\label{sp section-method}
Now, we present SPD-M.
Specifically, first, we approximate nonconvex Problem $\mathcal{P}^{spd}$ with a sequence of~successively~refined convex approximate problems $\mathcal{P}^{spd,(k)}, k\in\mathbb{N}$.
Each convex problem is obtained by approximating the objective function and constraints of Problem $\mathcal{P}^{spd}$ at $(\mathbf{x}^{(k)}, \mathbf{y}^{(k)})$:
	\begin{align}
			\mathcal{P}^{spd,(k)}:\ 
			\min_{\mathbf{x}, \mathbf{y}} \ &F(\mathbf{x}, \mathbf{y}; \mathbf{x}^{(k)}, \mathbf{y}^{(k)})\nonumber\\
			&\triangleq F_0(\mathbf{y}; \mathbf{y}^{(k)})+
			\sum_{i\in\mathcal{I}}F_{i}(\mathbf{x}_i, \mathbf{y}; \mathbf{x}^{(k)}_{i}, \mathbf{y}^{(k)})\nonumber\\ 
			s.t. \ 
			&\widetilde{\mathbf{G}}_i(\mathbf{x}_i, \mathbf{y}; \mathbf{x}^{(k)}_{i}, \mathbf{y}^{(k)})
			\preceq \mathbf{0}, i\in\mathcal{I}, \\
			&\mathbf{G}_{i}(\mathbf{x}_{i}; \mathbf{x}_{i}^{(k)})\preceq \mathbf{0},\ i\in\mathcal{I},  \\
			&\mathbf{G}_0(\mathbf{y}; \mathbf{y}^{(k)})\preceq \mathbf{0},\label{sp appro-cons-decoup-ineq-y}\\
			&\text{\eqref{sp coup-eq-cons},  \eqref{primal decoup-cvx-cons-x}, \eqref{primal decoup-cvx-cons-y}}, \nonumber
		\end{align}  
where $F_{0}$ and $F_{i}$ are strongly convex approximations of $f_0$ and $f_i$, respectively, satisfying Assumption~\ref{SCA-assump-surrogate-object},
and $\widetilde{\mathbf{G}}_i$, $\mathbf{G}_{i}$, and $\mathbf{G}_{0}$ are convex~approximations of $\widetilde{\mathbf{g}}_i$, $\mathbf{g}_{i}$, and $\mathbf{g}_{0}$, respectively, satisfying~Assumption~\ref{SCA-assump-surrogate-constriant}.\footnote{
When $F_{i}$ for some $i\in\mathcal{I}\cup\{0\}$ satisfies Assumption~\ref{SCA-assump-surrogate-object}.1, the strong convexity in $\mathbf{y}$ in Assumption~\ref{SCA-assump-surrogate-object}.1 for $F_{j}$ can reduce to convexity, for all $j\in\mathcal{I}\cup\{0\}-\{i\}$.}

Then, we apply the basic primal decomposition method for convex problems \cite{boyd2007notes,palomar2006tutorial} to Problem $\mathcal{P}^{spd,(k)}$.
Specifically, we fix the coupling variable $\mathbf{y}$ and decouple Problem~$\mathcal{P}^{spd,(k)}$ into $I$ (primal) subproblems, one for each $i\in\mathcal{I}$, as follows:
	\begin{align}
		\mathcal{P}^{spd, (k)}_{sub, i}:\ \min_{\mathbf{x}_{i}} \ &F_{i}(\mathbf{x}_i, \mathbf{y}; \mathbf{x}^{(k)}_{i}, \mathbf{y}^{(k)})\nonumber\\
		s.t. \ 
		&\widetilde{\mathbf{G}}_i(\mathbf{x}_i, \mathbf{y}; \mathbf{x}^{(k)}_{i}, \mathbf{y}^{(k)})\preceq \mathbf{0}, \label{sp appro-sub-coup-ineq}\\
		&\mathbf{A}_{i,\mathbf{x}}\mathbf{x}_i+\mathbf{A}_{i,\mathbf{y}}\mathbf{y}+ \mathbf{b}_{i}=\mathbf{0},\label{sp appro-sub-coup-eq}\\
		&\mathbf{G}_{i}(\mathbf{x}_{i}; \mathbf{x}_{i}^{(k)})\preceq \mathbf{0},  \label{sp appro-sub-decoup-ineq}\\
		&\mathbf{x}_{i}\in \mathcal{X}_i.  \label{sp appro-sub-decoup-cvx}    
	\end{align}
Then, Problem $\mathcal{P}^{spd,(k)}$ is equivalent to the following master (primal) problem:
	\begin{align}
		\mathcal{P}^{spd,(k)}_{mas}:\ \min_{\mathbf{y}} \ &F^*(\mathbf{y}; \mathbf{x}^{(k)}, \mathbf{y}^{(k)})\nonumber\\
		&\triangleq \!\! F_0(\mathbf{y}; \mathbf{y}^{(k)})+\sum_{i\in\mathcal{I}}\! F_{i}^*(\mathbf{y}; \mathbf{x}^{(k)}, \mathbf{y}^{(k)})
		\nonumber\\
		s.t. \ 
		&\text{\eqref{primal decoup-cvx-cons-y}, \eqref{sp appro-cons-decoup-ineq-y}},\nonumber     
	\end{align}  
where $F_{i}^*(\mathbf{y}; \mathbf{x}^{(k)}, \mathbf{y}^{(k)})\triangleq F_{i}(\mathbf{x}^{*,(k)}_{i}(\mathbf{y}), \mathbf{y}; \mathbf{x}^{(k)}_{i}, \mathbf{y}^{(k)})$ with $\mathbf{x}^{*,(k)}_{i}(\mathbf{y})$ being the unique optimal point of the convex subproblem $\mathcal{P}_{sub, i}^{spd, (k)}$ for $\mathbf{y}\!\in\!\operatorname{dom}F^*(\cdot; \mathbf{x}^{(k)}, \mathbf{y}^{(k)})$. 
Denote $\mathbf{y}^{*, (k)}$ as the unique optimal point of the convex master problem $\mathcal{P}^{spd, (k)}_{mas}$.
Then, $(\mathbf{x}^{*,(k)}(\mathbf{y}^{*, (k)}), \mathbf{y}^{*, (k)})$ is the unique optimal point of convex Problem $\mathcal{P}^{spd, (k)}$.

Next, taking a step from the given $(\mathbf{x}^{(k)}, \mathbf{y}^{(k)})$ towards $(\mathbf{x}^{*,(k)}(\mathbf{y}^{*, (k)}), \mathbf{y}^{*, (k)})$, we set:
\begin{equation}\label{sp smooth}
	\begin{gathered}
		\hspace{-3.2cm}(\mathbf{x}^{(k+1)}, \mathbf{y}^{(k+1)})=(\mathbf{x}^{(k)}, \mathbf{y}^{(k)})+\\
		\qquad\quad\gamma^{(k)}\left( (\mathbf{x}^{*,(k)}(\mathbf{y}^{*, (k)}), \mathbf{y}^{*, (k)})-(\mathbf{x}^{(k)}, \mathbf{y}^{(k)})\right),
	\end{gathered}
\end{equation}
where $\gamma^{(k)}$ is a step size satisfying \eqref{sca step size}. 
The proposed \mbox{SPD-M} for Problem $\mathcal{P}^{spd}$ is given by the convex approximate problems $\mathcal{P}^{spd,(k)}$,  the subproblems $\mathcal{P}_{sub, i}^{spd, (k)}$, $i\in\mathcal{I}$, the master problems $\mathcal{P}^{spd, (k)}_{mas}$, and the updates in \eqref{sp smooth}, $k\in\mathbb{N}$.

The effectiveness of SPD-M can be easily concluded based on \cite[Theorem~2]{scutari2017parallel}.
\begin{statement}\label{sp thm-1}
	If Assumptions~\ref{regularity}-\ref{SCA-assump-surrogate-constriant} and \ref{sp p} are satisfied, then at least one limit point of $\{(\mathbf{x}^{(k)}, \mathbf{y}^{(k)})\}_{k\in\mathbb{N}}$ generated by SPD-M is a stationary point of Problem $\mathcal{P}^{spd}$.
\end{statement}

\subsection{SPD-A and Its Convergence Analysis}\label{sp section-alg}
In SPD-M, only the value of $F^*_i$ (i.e., the component of the master problem $\mathcal{P}^{spd,(k)}_{mas}$'s objective function $F^{*}$) is accessible. 
To solve the master problem $\mathcal{P}^{spd, (k)}_{mas}$, more information of $F^*_i$ (e.g., its subgradients w.r.t. $\mathbf{y}$) is required. 
To obtain such information, we need the following assumptions.

\begin{assumption}\label{sp assump-surrogate-object-i-2}
	For all $i\in\mathcal{I}$, $F_{i}$ is separable on $\mathbf{x}_{i}$ and $\mathbf{y}$, i.e.,  $F_{i}(\mathbf{x}_i, \mathbf{y}; \mathbf{z}, \mathbf{w})=F_{i,\mathbf{x}}(\mathbf{x}_i; \mathbf{z}, \mathbf{w})+F_{i,\mathbf{y}}(\mathbf{y}; \mathbf{z}, \mathbf{w})$, where $F_{i,\mathbf{x}}:\mathcal{U}_i\times (\mathcal{X}_i\times\mathcal{Y})\rightarrow \mathbb{R}$ and $F_{i,\mathbf{y}}:\mathcal{V}\times (\mathcal{X}_i\times\mathcal{Y})\rightarrow \mathbb{R}$ are continuously differentiable w.r.t. the first argument.\footnote{Please see \cite[Examples 5-7]{scutari2017parallel} for examples of $F_i$  satisfying Assumptions~\ref{SCA-assump-surrogate-object} and \ref{sp assump-surrogate-object-i-2}.}
\end{assumption}

\begin{assumption}\label{sp assump-surrogate-constriant-i-c-2}
	For all $i\in\mathcal{I}$, $\widetilde{\mathbf{G}}_{i}$ is separable on $\mathbf{x}_{i}$ and $\mathbf{y}$ and linear on $\mathbf{y}$, i.e., $\widetilde{\mathbf{G}}_{i}(\mathbf{x}_{i}, \mathbf{y}; \mathbf{z},\mathbf{w})=\widetilde{\mathbf{G}}_{i,\mathbf{x}}(\mathbf{x}_{i}; \mathbf{z},\mathbf{w})+\mathbf{C}_{i}( \mathbf{z},\mathbf{w})\mathbf{y}$, where $\widetilde{\mathbf{G}}_{i,\mathbf{x}}:\mathcal{U}_{i}\times (\mathcal{X}_{i}\times\mathcal{Y})\rightarrow \mathbb{R}^{r_{i}}$ and $\mathbf{C}_{i}:\mathcal{X}_{i}\times\mathcal{Y}\rightarrow\mathbb{R}^{\tilde{r}_{i}\times n_0}$ are continuous.
\end{assumption}

Two examples of $\widetilde{\mathbf{G}}_i$ that satisfies Assumptions~\ref{SCA-assump-surrogate-constriant} and~\ref{sp assump-surrogate-constriant-i-c-2} are: 
1) If $\tilde{\mathbf{g}}_i$ is separable on $\mathbf{x}_i$ and $\mathbf{y}$ and linear on $\mathbf{y}$, i.e., $\tilde{\mathbf{g}}_i(\mathbf{x}_i,\mathbf{y})=\tilde{\mathbf{g}}_{i,\mathbf{x}}(\mathbf{x}_i)+\mathbf{C}_{i}\mathbf{y}$, then $\widetilde{\mathbf{G}}_{i}$ can be chosen by constructing approximate function $\widetilde{\mathbf{G}}_{i,\mathbf{x}}$ of $\tilde{\mathbf{g}}_{i,\mathbf{x}}$ that satisfies Assumption~\ref{SCA-assump-surrogate-constriant} and keeping $\mathbf{C}_{i}\mathbf{y}$ unchanged, i.e., $\widetilde{\mathbf{G}}_{i}(\mathbf{x}_{i},\mathbf{y};\mathbf{z},\mathbf{w})=\widetilde{\mathbf{G}}_{i,\mathbf{x}}(\mathbf{x}_{i};\mathbf{z})+\mathbf{C}_{i}\mathbf{y}$;
2) If $\tilde{\mathbf{g}}_i$ has a Difference of Convex (DC) structure, i.e., $\tilde{\mathbf{g}}_i(\mathbf{x}_i,\mathbf{y})=\tilde{\mathbf{g}}_{i}^{+}(\mathbf{x}_i)-\tilde{\mathbf{g}}_{i}^{-}(\mathbf{x}_i,\mathbf{y})$ with two convex and continuously differentiable functions $\tilde{\mathbf{g}}_{i}^{+}$ and $\tilde{\mathbf{g}}_{i}^{-}$, then  $\widetilde{\mathbf{G}}_{i}$ can be chosen by linearizing the concave part $-\tilde{\mathbf{g}}_{i}^{-}$ and keeping the convex part $\tilde{\mathbf{g}}_{i}^{+}$ unchanged, i.e., $\widetilde{\mathbf{G}}_{i}(\mathbf{x}_{i}, \mathbf{y}; \mathbf{z},\mathbf{w})\!=\!\tilde{\mathbf{g}}_{i}^{+}(\mathbf{x}_{i})\!-\!\tilde{\mathbf{g}}_{i}^{-}(\mathbf{z},\mathbf{w})\!-\!\nabla_{\mathbf{x}_{i}} \tilde{\mathbf{g}}_{i}^{-}(\mathbf{z},\mathbf{w})^{T}(\mathbf{x}_{i}\!-\!\mathbf{z})\!-\!\nabla_{\mathbf{y}} \tilde{\mathbf{g}}_{i}^{-}(\mathbf{z},\mathbf{w})^{T}(\mathbf{y}\!-\!\mathbf{w})$.

Under Assumption~\ref{sp assump-surrogate-constriant-i-c-2}, the constraints in \eqref{sp appro-sub-coup-ineq} reduce to 
\begin{align}
	\widetilde{\mathbf{G}}_{i,\mathbf{x}}(\mathbf{x}_i; \mathbf{x}^{(k)}_{i}, \mathbf{y}^{(k)})+\mathbf{C}_{i}(\mathbf{x}^{(k)}_{i}, \mathbf{y}^{(k)})\mathbf{y}\preceq \mathbf{0}.\label{sp appro-sub-coup-ineq-2}
\end{align}

\begin{assumption}\label{sp assump-slater and feasible}
	For all $i\in\mathcal{I}$ and $k\in\mathbb{N}$, the subproblem $\mathcal{P}^{spd, (k)}_{sub, i}$ 
	satisfies Slater's condition.
\end{assumption}

Assumption~\ref{sp assump-slater and feasible} ensures that for all $i\!\in\!\mathcal{I}$ and $\mathbf{y}\!\in\!\operatorname{dom}F^*(\cdot; \mathbf{x}^{(k)}, \mathbf{y}^{(k)})$, $\mathbf{x}^{*,(k)}_{i}(\mathbf{y})$ is a stationary point of the subproblem $\mathcal{P}^{spd, (k)}_{sub, i}$, and its Lagrange multipliers $\tilde{\boldsymbol{\mu}}_{i}^{(k)}(\mathbf{y})$,  $\tilde{\boldsymbol{\lambda}}_{i}^{(k)}(\mathbf{y})$, and $\boldsymbol{\mu}_{i}^{(k)}(\mathbf{y})$ associated with the constraints in \eqref{sp appro-sub-coup-ineq-2}, \eqref{sp appro-sub-coup-eq}, and \eqref{sp appro-sub-decoup-ineq}, are optimal Lagrange multipliers \cite[pp. 244]{boyd2004convex}.

\begin{algorithm}[t]
	\caption{Primal Decomposition Algorithm \cite{boyd2007notes,palomar2006tutorial} for Problem $\mathcal{P}^{spd,(k)}$}\label{sp alg:alg1-PD}
	\begin{algorithmic}[1]\label{alg1}\small
		\STATE \textbf{initialization}: 
		Set $t=0$ and $\mathbf{y}^{(0)}=\mathbf{y}^{(k)}$; 
		choose
		$\{\gamma_{in}^{(t)}\}_{t\in\mathbb{N}}\subseteq(0,+\infty)$ satisfying  $\sum_{t=0}^{\infty}\gamma_{in}^{(t)}=\infty$ and  $\sum_{t=0}^{\infty}(\gamma_{in}^{(t)})^2<\infty$.
		\REPEAT
		\FOR{all $i\in\mathcal{I}$}
		\STATE Get the unique optimal point $\mathbf{x}_{i}^{*,(k)}(\mathbf{y}^{(t)})$ of the convex subproblem $\mathcal{P}^{spd,(k)}_{sub, i}$.
		\STATE Obtain the corresponding Lagrange multipliers $\tilde{\boldsymbol{\mu}}_{i}^{(k)}(\mathbf{y}^{(t)})$  and $\tilde{\boldsymbol{\lambda}}_{i}^{(k)}(\mathbf{y}^{(t)})$.
		\STATE Compute $\partial_{\mathbf{y}}F_{i}^{*}(\mathbf{y}^{(t)}; \mathbf{x}^{(k)}, \mathbf{y}^{(k)})$ according to:
		\begin{equation*}
			\begin{aligned}
				&\partial_{\mathbf{y}}F_{i}^{*}(\mathbf{y}^{(t)}; \mathbf{x}^{(k)}, \mathbf{y}^{(k)})=\nabla_{\mathbf{y}}F_{i,\mathbf{y}}(\mathbf{y}^{(t)}; \mathbf{x}^{(k)}, \mathbf{y}^{(k)})\\
				&\mathbf{C}_{i}(\mathbf{x}^{(k)}_{i}, \mathbf{y}^{(k)})^{T}\tilde{\boldsymbol{\mu}}^{(k)}_{i}(\mathbf{y}^{(t)})+\mathbf{A}_{i,\mathbf{y}}^{T}\tilde{\boldsymbol{\lambda}}^{(k)}_{i}(\mathbf{y}^{(t)}).
			\end{aligned}
		\end{equation*}
		\ENDFOR
		\STATE Compute $\partial_{\mathbf{y}}F^{*}(\mathbf{y}^{(t)}; \mathbf{x}^{(k)}, \mathbf{y}^{(k)})$ according to:
		\begin{equation*}
			\begin{aligned}
				\partial_{\mathbf{y}}F^{*}(\mathbf{y}^{(t)}; \mathbf{x}^{(k)}, \mathbf{y}^{(k)})=&\nabla_{\mathbf{y}}F_0(\mathbf{y}^{(t)}; \mathbf{y}^{(k)})\\
				&+\sum_{i\in\mathcal{I}}
				\partial_{\mathbf{y}}F_{i}^{*}(\mathbf{y}^{(t)}; \mathbf{x}^{(k)}, \mathbf{y}^{(k)}).
			\end{aligned}
		\end{equation*}
		\STATE Update $\mathbf{y}$ according to: 
		$$
		\mathbf{y}^{(t+1)}= \operatorname{P}_{\mathcal{C}^{(k)}}\left(\mathbf{y}^{(t)} - \gamma_{in}^{(t)} \partial_{\mathbf{y}}F^{*}(\mathbf{y}^{(t)}; \mathbf{x}^{(k)}, \mathbf{y}^{(k)})\right),
		$$
		where $\mathcal{C}^{(k)}$ is the feasible set of the master problem $\mathcal{P}^{spd, (k)}_{mas}$.
		\STATE Set $t \gets t+1$.
		\UNTIL{Some termination criterion is met.}
	\end{algorithmic}
	\label{alg1}
\end{algorithm}

\begin{algorithm}[t]
	\caption{SPD-A}\label{sp alg:alg1-SCA}
	\begin{algorithmic}[1]\label{alg1}\small
		\STATE \textbf{initialization}: Set $k=0$ and choose any feasible point $(\mathbf{x}^{(0)},\mathbf{y}^{(0)})$ and $\{\gamma^{(k)}\}_{k\in\mathbb{N}}\subseteq(0,1]$ satisfying (\ref{sca step size}).
		\REPEAT
		\STATE Obtain the unique optimal point $(\mathbf{x}^{*,(k)}(\mathbf{y}^{*, (k)}), \mathbf{y}^{*, (k)})$ of the convex approximate problem $\mathcal{P}^{spd,(k)}$ by Algorithm~\ref{sp alg:alg1-PD}.
		\STATE Update $(\mathbf{x}, \mathbf{y})$ by (\ref{sp smooth}).
		\STATE Set $k \gets k+1$.
		\UNTIL{Some termination criterion is met.}
	\end{algorithmic}
	\label{alg1}
\end{algorithm}

Now, we are ready to propose SPD-A for Problem $\mathcal{P}^{spd}$. 
Specifically, first, for fixed $k$, we adopt the basic primal decomposition algorithm for convex problems \cite{boyd2007notes,palomar2006tutorial} to solve Problem $\mathcal{P}^{spd,(k)}$. 
The detailed procedure is summarized in 
Algorithm~\ref{sp alg:alg1-PD}.\footnote{In Steps~6, 8, and 9, parallel computations can be applied to conduct matrix (vector) multiplications and additions.}
In Step 4, we can solve the convex subproblem $\mathcal{P}^{spd,(k)}_{sub, i}$ using algorithms for convex problems, e.g., interior-point methods.
In Step 5, 
we can obtain the corresponding Lagrange multipliers $\tilde{\boldsymbol{\mu}}_{i}^{(k)}(\mathbf{y}^{(t)})$ and $\tilde{\boldsymbol{\lambda}}_{i}^{(k)}(\mathbf{y}^{(t)})$ 
by solving the KKT system or the KKT conditions. 
Then, we present the complete SPD-A in Algorithm~\ref{sp alg:alg1-SCA}.
Notably, in each iteration, the convex approximate problem in SPD-A is directly solved by Algorithm~\ref{sp alg:alg1-PD}, while that in SCAPD \cite{scutari2017parallel} is solved by first introducing slack variables and additional constraints and then applying the standard primal decomposition algorithm for convex problems \cite{boyd2007notes,palomar2006tutorial}. 
Thus, SPD-A may achieve better convergence performance and shorter computation time than SCAPD \cite{scutari2017parallel}, which will be shown in Section~\ref{sec numerical results}.

Based on the convergence results of the projected subgradient algorithm \cite[Proposition 8.2.6]{bertsekas2003convex} and Statement~\ref{sp thm-1},  
we can give the convergence of Algorithm~\ref{sp alg:alg1-SCA}.
\begin{statement}\label{sp thm-2}
	If Assumptions~\ref{regularity}-\ref{SCA-assump-surrogate-constriant} and~\ref{sp p}-\ref{sp assump-slater and feasible} are satisfied, then at least one limit point of $\{(\mathbf{x}^{(k)}, \mathbf{y}^{(k)})\}_{k\in\mathbb{N}}$ generated by Algorithm~\ref{sp alg:alg1-SCA} is a stationary point of Problem $\mathcal{P}^{spd}$.
\end{statement}

\section{Dual Decomposition}\label{dual section}
In this section, we consider a nonconvex problem with coupling constraints and propose a new dual decomposition method and a corresponding algorithm for this problem, referred to as DD-M and \mbox{DD-A}, respectively.

\subsection{Problem Formulation}\label{dual section-problem}
Consider the following nonconvex problem:
\begin{align}
	\mathcal{P}^{dd}:\ \min_{\mathbf{x}} \ &f(\mathbf{x})\triangleq\sum_{i\in\mathcal{I}}f_i(\mathbf{x}_i)&&\nonumber\\  
	s.t. \ 
	&\sum_{i\in\mathcal{I}}\tilde{\mathbf{g}}_i(\mathbf{x}_i)\preceq\mathbf{0},&&\label{dual coup-ineq-cons}\\
	&\sum_{i\in\mathcal{I}}\tilde{\mathbf{h}}_i(\mathbf{x}_i)=\mathbf{0},&&\label{dual coup-eq-cons}\\
	&\mathbf{g}_{i}(\mathbf{x}_{i})\preceq \mathbf{0},\ i\in\mathcal{I},  &&\label{dual decoup-ineq-cons}\\
	&\mathbf{x}_{i}\in \mathcal{X}_i,\ i\in\mathcal{I}, && \label{dual decoup-cvx-cons}       
\end{align}    
where $\mathbf{x}\triangleq(\mathbf{x}_1,\mathbf{x}_2,\cdots,\mathbf{x}_{I})\in \mathbb{R}^{n}$ with $\mathbf{x}_i\in \mathbb{R}^{n_i}$, $i\in \mathcal{I}$, $f_i:\mathcal{U}_i\rightarrow \mathbb{R}$,
$\tilde{\mathbf{g}}_i:\mathcal{U}_i\rightarrow \mathbb{R}^{\tilde{r}}$,
$\tilde{\mathbf{h}}_i:\mathcal{U}_i\rightarrow \mathbb{R}^{\tilde{m}}$
$\mathbf{g}_{i}:\mathcal{U}_{i}\rightarrow \mathbb{R}^{r_i}$, $i\in\mathcal{I}$. 
Notably, $\tilde{\mathbf{h}}_i$ for all $i\in\mathcal{I}$ is generally not affine in $\mathbf{x}_{i}$.\footnote{If $\tilde{\mathbf{h}}_{i}$ for all $i\in\mathcal{I}$ is affine in $\mathbf{x}_{i}$, then the generally nonlinear equality constraints in (\ref{dual coup-eq-cons}) reduce to the linear equality constraints in (\ref{sd coup-eq-cons}).}

\begin{assumption}[Assumptions on Problem $\mathcal{P}^{dd}$]\label{dual assump-p}
1) For all $i\in \mathcal{I}$, $\mathcal{X}_i$ is a nonempty, closed, and convex set that belongs to the open set $\mathcal{U}_i\subseteq\mathbb{R}^{n_i}$;
2) For all $i\in \mathcal{I}$, $f_i$ is continuously differentiable on $\mathcal{U}_i$,  
	and $\nabla f_i$ is Lipschitz continuous on $\mathcal{X}_i$; 
3) For all $i\in \mathcal{I}$, $\tilde{\mathbf{g}}_i$ and $\mathbf{g}_{i}$ are continuously differentiable  on $\mathcal{U}_i$; 
4) For all $i\in \mathcal{I}$, $\tilde{\mathbf{h}}_i$ is continuously differentiable  on $\mathcal{U}_i$; 
5) $f$ is bounded below.
\end{assumption}

Observe that if the coupling constraints in \eqref{dual coup-ineq-cons} and \eqref{dual coup-eq-cons} are relaxed, then Problem $\mathcal{P}^{dd}$ decouples.
Now, we form the partial Lagrangian by augmenting the objective function with a weighted sum of the constraints in \eqref{dual coup-ineq-cons} and \eqref{dual coup-eq-cons}:
$L(\mathbf{x}, \tilde{\boldsymbol{\mu}}, \tilde{\boldsymbol{\lambda}} )\triangleq \sum_{i\in\mathcal{I}}L_{i}(\mathbf{x}_{i}, \tilde{\boldsymbol{\mu}}, \tilde{\boldsymbol{\lambda}})$,
where $L_{i}(\mathbf{x}_{i}, \tilde{\boldsymbol{\mu}}, \tilde{\boldsymbol{\lambda}} )\triangleq f_i(\mathbf{x}_i)+\tilde{\boldsymbol{\mu}}^T\tilde{\mathbf{g}}_i(\mathbf{x}_i)+\tilde{\boldsymbol{\lambda}}^T\tilde{\mathbf{h}}_i(\mathbf{x}_i)$, and the weights $\tilde{\boldsymbol{\mu}}\in \mathbb{R}^{\tilde{r}}_{+}$ and $\tilde{\boldsymbol{\lambda}}\in \mathbb{R}^{\tilde{m}}$ are the Lagrange multipliers associated with the constraints in \eqref{dual coup-ineq-cons} and \eqref{dual coup-eq-cons}, respectively. 
$L$ is separable in $\mathbf{x}_i$, $i\in\mathcal{I}$.
Thus, the problem for obtaining the dual function, i.e.,
$q(\tilde{\boldsymbol{\mu}}, \tilde{\boldsymbol{\lambda}})=\inf\{L(\mathbf{x}, \tilde{\boldsymbol{\mu}}, \tilde{\boldsymbol{\lambda}})\ | \ \mathbf{g}_{i}(\mathbf{x}_{i})\preceq \mathbf{0}, \mathbf{x}_{i}\in\mathcal{X}_i, i\in\mathcal{I}\}$, 
can be separated into $I$ (dual) subproblems, 
one for each $i\in\mathcal{I}$:
	\begin{align}
		\mathcal{P}^{dd}_{sub, i}:\ \min_{\mathbf{x}_i} \ &L_{i}(\mathbf{x}_{i}, \tilde{\boldsymbol{\mu}}, \tilde{\boldsymbol{\lambda}})\nonumber\\ 
		s.t. \ 
		&\mathbf{g}_{i}(\mathbf{x}_{i})\preceq \mathbf{0},\label{dual sub-decoup-ineq}\\
		&\mathbf{x}_{i}\in \mathcal{X}_i.\label{dual sub-decoup-cvx}
	\end{align}
Then the master (dual) problem (i.e., dual problem) is:
	\begin{align}
		\mathcal{P}^{dd}_{mas}:\ \max_{\tilde{\boldsymbol{\mu}}, \tilde{\boldsymbol{\lambda}}} \ &q(\tilde{\boldsymbol{\mu}}, \tilde{\boldsymbol{\lambda}}) = \sum_{i\in \mathcal{I}}L_{i}(\mathbf{x}^{*}_{i}(\tilde{\boldsymbol{\mu}}, \tilde{\boldsymbol{\lambda}}), \tilde{\boldsymbol{\mu}}, \tilde{\boldsymbol{\lambda}} )\nonumber\\
		s.t. \ 
		&\tilde{\boldsymbol{\mu}}\succeq\mathbf{0},\label{dual mas-decoup}
	\end{align}
where $\mathbf{x}^*_i(\tilde{\boldsymbol{\mu}}, \tilde{\boldsymbol{\lambda}})$ represents an optimal point of the subproblem $\mathcal{P}_{sub, i}^{dd}$ for $(\tilde{\boldsymbol{\mu}}, \tilde{\boldsymbol{\lambda}})\in\mathbb{R}_{+}^{\tilde{r}}\times\mathbb{R}^{\tilde{m}}$.
Each subproblem $\mathcal{P}_{sub, i}^{dd}$ for $(\tilde{\boldsymbol{\mu}}, \tilde{\boldsymbol{\lambda}})\in\mathbb{R}_{+}^{\tilde{r}}\times\mathbb{R}^{\tilde{m}}$ may have multiple optimal points but a unique optimal value, and hence $q(\tilde{\boldsymbol{\mu}}, \tilde{\boldsymbol{\lambda}})$ is unique.
Besides, the master problem $\mathcal{P}^{dd}_{mas}$ is always convex.\footnote{A Lagrange dual problem is always convex whether or not the original problem is convex.}

Since Problem $\mathcal{P}^{dd}$ is nonconvex, all subproblems are generally nonconvex (or cannot be shown to be convex).
Thus, it is usually difficult to obtain an optimal point of the subproblem $\mathcal{P}_{sub, i}^{dd}$ for all $i\in\mathcal{I}$ and the objective value $q(\tilde{\boldsymbol{\mu}}, \tilde{\boldsymbol{\lambda}})$ of the master problem $\mathcal{P}^{dd}_{mas}$ at $(\tilde{\boldsymbol{\mu}}, \tilde{\boldsymbol{\lambda}})\in\mathbb{R}_{+}^{\tilde{r}}\times\mathbb{R}^{\tilde{m}}$.  
Besides, strong duality does not generally hold for nonconvex Problem $\mathcal{P}^{dd}$.
Hence, the optimal point of the master problem $\mathcal{P}^{dd}_{mas}$ may not lead to a stationary point of Problem $\mathcal{P}^{dd}$.
That is, the dual decomposition method (given by the subproblems $\mathcal{P}_{sub, i}^{dd}$, $i\in\mathcal{I}$ and the master problem $\mathcal{P}^{dd}_{mas}$) for convex problems \cite{boyd2007notes,palomar2006tutorial} cannot produce a stationary point of nonconvex Problem $\mathcal{P}^{dd}$.
Consequently, the dual decomposition algorithm for convex problems \cite{boyd2007notes,palomar2006tutorial}, which is based on the dual decomposition method, is not applicable to nonconvex Problem $\mathcal{P}^{dd}$.

\subsection{DD-M and Its Theoretical Analysis}\label{dual section-method}
To address the issues discussed above, we propose \mbox{DD-M} for Problem $\mathcal{P}^{dd}$. 
Specifically, we approximate the master problem $\mathcal{P}^{dd}_{mas}$ with the following problem:
\begin{equation*}
	\begin{aligned}
		\mathcal{P}^{dd,\dag}_{mas}:\ \max_{\tilde{\boldsymbol{\mu}}, \tilde{\boldsymbol{\lambda}}} \ &q^{\dag}(\tilde{\boldsymbol{\mu}}, \tilde{\boldsymbol{\lambda}})\triangleq \sum_{i\in \mathcal{I}}L_{i}(\mathbf{x}^{\dag}_{i}(\tilde{\boldsymbol{\mu}}, \tilde{\boldsymbol{\lambda}}), \tilde{\boldsymbol{\mu}}, \tilde{\boldsymbol{\lambda}})\\
		s.t. \ 
		&\text{\eqref{dual mas-decoup}}, 
	\end{aligned} 
\end{equation*}
where $\mathbf{x}^{\dag}_i(\tilde{\boldsymbol{\mu}}, \tilde{\boldsymbol{\lambda}})$ represents a (selected) stationary point of the subproblem $\mathcal{P}_{sub, i}^{dd}$ for $(\tilde{\boldsymbol{\mu}}, \tilde{\boldsymbol{\lambda}})\in\mathbb{R}_{+}^{\tilde{r}}\times\mathbb{R}^{\tilde{m}}$. 
Note that each subproblem $\mathcal{P}_{sub, i}^{dd}$ for $(\tilde{\boldsymbol{\mu}}, \tilde{\boldsymbol{\lambda}})\in\mathbb{R}_{+}^{\tilde{r}}\times\mathbb{R}^{\tilde{m}}$ may have multiple stationary points, and hence $q^{\dag}(\tilde{\boldsymbol{\mu}}, \tilde{\boldsymbol{\lambda}})$ may change with the selected stationary points, which is different from $q(\tilde{\boldsymbol{\mu}}, \tilde{\boldsymbol{\lambda}})$.
In addition, Problem $\mathcal{P}^{dd,\dag}_{mas}$ is 
generally nonconvex (or cannot be shown to be convex), in contrast to the convex master problem $\mathcal{P}^{dd}_{mas}$.
We also call Problem $\mathcal{P}^{dd,\dag}_{mas}$ the master problem.
The proposed DD-M for Problem $\mathcal{P}^{dd}$ is given by the subproblems $\mathcal{P}_{sub, i}^{dd}$, $i\in\mathcal{I}$ and the master problem $\mathcal{P}^{dd,\dag}_{mas}$.

Next, we verify the effectiveness of DD-M by analyzing the relationship between the stationary points of the master problem $\mathcal{P}^{dd,\dag}_{mas}$ and Problem $\mathcal{P}^{dd}$ under the following assumptions.

\begin{assumption}\label{dual assump-stationary-1}
	There exists an open set $\mathcal{N}\subseteq\mathbb{R}_{+}^{\tilde{r}}\times\mathbb{R}^{\tilde{m}}$ and single-valued continuously differentiable functions $\mathbf{X}^{\dag}_i:\mathcal{N}\rightarrow \mathcal{U}_{i}$, $\mathbf{M}_i:\mathcal{N}\rightarrow \mathbb{R}_+^{r_i}$, $i\in\mathcal{I}$ such that for all $i\in\mathcal{I}$ and $(\tilde{\boldsymbol{\mu}}, \tilde{\boldsymbol{\lambda}})\in \mathcal{N}$, $\mathbf{X}^{\dag}_i(\tilde{\boldsymbol{\mu}}, \tilde{\boldsymbol{\lambda}})\in\operatorname{int}(\mathcal{X}_{i})$ is a stationary point of the subproblem $\mathcal{P}_{sub, i}^{dd}$, and $\mathbf{M}_i(\tilde{\boldsymbol{\mu}}, \tilde{\boldsymbol{\lambda}})$ is the Lagrange multiplier associated with the constraint in \eqref{dual sub-decoup-ineq}.
\end{assumption}

\begin{myassumptiondual}\label{dual assump-stationary-2}
1) $f_i$, $\tilde{\mathbf{g}}_i$, $\tilde{\mathbf{h}}_i$, and $\mathbf{g}_i$, $i\in\mathcal{I}$ are twice continuously differentiable;
2) There exists a stationary point $\mathbf{x}^{\ddagger}\in \prod_{i\in\mathcal{I}}\operatorname{int}(\mathcal{X}_{i})$ of Problem $\mathcal{P}^{dd}$ with Lagrange multipliers $\tilde{\boldsymbol{\mu}}^{\ddagger}$, $\tilde{\boldsymbol{\lambda}}^{\ddagger}$,  and $\boldsymbol{\mu}^{\ddagger}_i$, $i\in\mathcal{I}$ associated with the constraints in \eqref{dual coup-ineq-cons}, \eqref{dual coup-eq-cons}, and \eqref{dual decoup-ineq-cons}, respectively, such that for all $i\in\mathcal{I}$, the gradient of the subproblem $\mathcal{P}^{dd}_{sub, i}$'s KKT function $\mathbf{k}_{i}$ for $(\tilde{\boldsymbol{\mu}}, \tilde{\boldsymbol{\lambda}})\in\mathbb{R}_{+}^{\tilde{r}}\times\mathbb{R}^{\tilde{m}}$ w.r.t. $(\mathbf{x}_i,\boldsymbol{\mu}_i)$ at $(\mathbf{x}_i^{\ddagger}, \boldsymbol{\mu}^{\ddagger}_i, \tilde{\boldsymbol{\mu}}^{\ddagger}, \tilde{\boldsymbol{\lambda}}^{\ddagger})$, i.e., $\nabla_{(\mathbf{x}_i,\boldsymbol{\mu}_i)} \mathbf{k}_i(\mathbf{x}_i^{\ddagger},  \boldsymbol{\mu}^{\ddagger}_i, \tilde{\boldsymbol{\mu}}^{\ddagger}, \tilde{\boldsymbol{\lambda}}^{\ddagger})$, is invertible.
\end{myassumptiondual}

\begin{lemma}\label{dual lem1}
	Suppose that Assumptions~\ref{dual assump-p} and \ref{dual assump-stationary-2} are satisfied.
	Then there exists a neighborhood $\mathcal{N}_{(\tilde{\boldsymbol{\mu}}^{\ddagger}, \tilde{\boldsymbol{\lambda}}^{\ddagger})}\subseteq\mathbb{R}_{+}^{\tilde{r}}\times\mathbb{R}^{\tilde{m}}$ of $(\tilde{\boldsymbol{\mu}}^{\ddagger}, \tilde{\boldsymbol{\lambda}}^{\ddagger})$ and neighborhoods $\mathcal{N}_{\mathbf{x}_i^{\ddagger}}$,
	$\mathcal{N}_{\boldsymbol{\mu}^{\ddagger}_i}$, $i\in\mathcal{I}$ of $\mathbf{x}_i^{\ddagger}$, $\boldsymbol{\mu}^{\ddagger}_i$, $i\in\mathcal{I}$, respectively, such that: 
	\begin{enumerate}
		\item Assumption~\ref{dual assump-stationary-1} holds with $\mathcal{N}$ being $\mathcal{N}_{(\tilde{\boldsymbol{\mu}}^{\ddagger}, \tilde{\boldsymbol{\lambda}}^{\ddagger})}$;

		\item For all $i\in\mathcal{I}$ and $(\tilde{\boldsymbol{\mu}}, \tilde{\boldsymbol{\lambda}})\in \mathcal{N}_{(\tilde{\boldsymbol{\mu}}^{\ddagger}, \tilde{\boldsymbol{\lambda}}^{\ddagger})}$, the subproblem $\mathcal{P}^{dd}_{sub,i}$ has a unique stationary point in $\mathcal{N}_{\mathbf{x}_{i}^{\ddagger}}$ that has Lagrange multiplier associated with the constraint in \eqref{dual sub-decoup-ineq} in  $\mathcal{N}_{\boldsymbol{\mu}_{i}^{\ddagger}}$, and the Lagrange multiplier is unique.
	\end{enumerate}
\end{lemma}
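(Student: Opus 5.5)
The plan is to apply the implicit function theorem to each subproblem's KKT function, with the dual variables $(\tilde{\boldsymbol{\mu}}, \tilde{\boldsymbol{\lambda}})$ as parameters. This mirrors the argument for Lemma~\ref{primal lem1}. Fix $i\in\mathcal{I}$. The subproblem $\mathcal{P}^{dd}_{sub,i}$ has objective $L_i(\mathbf{x}_i,\tilde{\boldsymbol{\mu}},\tilde{\boldsymbol{\lambda}})$, inequality constraints $\mathbf{g}_i(\mathbf{x}_i)\preceq\mathbf{0}$, and no equality constraints. By Definition~\ref{definition kkt-function}, its KKT function is
\begin{align*}
\mathbf{k}_i(\mathbf{x}_i,\boldsymbol{\mu}_i,\tilde{\boldsymbol{\mu}},\tilde{\boldsymbol{\lambda}})=\begin{pmatrix}\nabla f_i(\mathbf{x}_i)+\nabla\tilde{\mathbf{g}}_i(\mathbf{x}_i)\tilde{\boldsymbol{\mu}}+\nabla\tilde{\mathbf{h}}_i(\mathbf{x}_i)\tilde{\boldsymbol{\lambda}}+\nabla\mathbf{g}_i(\mathbf{x}_i)\boldsymbol{\mu}_i\\ \boldsymbol{\eta}_i(\mathbf{x}_i,\boldsymbol{\mu}_i)\end{pmatrix}.
\end{align*}

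\textbf{Step 1: the anchor point is a zero.} Since $\mathbf{x}^{\ddagger}\in\prod_i\operatorname{int}(\mathcal{X}_i)$ is stationary for $\mathcal{P}^{dd}$ with multipliers $\tilde{\boldsymbol{\mu}}^{\ddagger},\tilde{\boldsymbol{\lambda}}^{\ddagger},\boldsymbol{\mu}_i^{\ddagger}$, the $\mathbf{x}_i$-block of \eqref{KKT-5} gives exactly the first block of $\mathbf{k}_i=\mathbf{0}$. Complementary slackness gives $\langle\boldsymbol{\mu}_i^{\ddagger}\rangle_j=0$ for inactive $j$, and active constraints satisfy $\langle\mathbf{g}_i(\mathbf{x}_i^{\ddagger})\rangle_j=0$. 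Hence $\mathbf{k}_i(\mathbf{x}_i^{\ddagger},\boldsymbol{\mu}_i^{\ddagger},\tilde{\boldsymbol{\mu}}^{\ddagger},\tilde{\boldsymbol{\lambda}}^{\ddagger})=\mathbf{0}$, and $\mathbf{x}_i^{\ddagger}$ is a stationary point of $\mathcal{P}^{dd}_{sub,i}$ at $(\tilde{\boldsymbol{\mu}}^{\ddagger},\tilde{\boldsymbol{\lambda}}^{\ddagger})$.

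\textbf{Step 2: $\mathbf{k}_i$ is $C^1$ near the anchor.} This step needs care, because $\boldsymbol{\eta}_i$ switches definition with the active set $\mathcal{A}(\mathbf{x}_i)$. I would freeze the active set $\mathcal{A}_i^{\ddagger}\triangleq\mathcal{A}(\mathbf{x}_i^{\ddagger})$ and work with the smooth function $\bar{\mathbf{k}}_i$ that uses $\mathcal{A}_i^{\ddagger}$ in place of $\mathcal{A}(\mathbf{x}_i)$. By Assumption~\ref{dual assump-stationary-2}.1, $\bar{\mathbf{k}}_i$ is $C^1$ in $(\mathbf{x}_i,\boldsymbol{\mu}_i,\tilde{\boldsymbol{\mu}},\tilde{\boldsymbol{\lambda}})$. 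It coincides with $\mathbf{k}_i$ at the anchor, and so does its Jacobian in $(\mathbf{x}_i,\boldsymbol{\mu}_i)$ (this is the object Assumption~\ref{dual assump-stationary-2}.2 refers to). The implicit function theorem then yields neighborhoods $\mathcal{N}^i$ of $(\tilde{\boldsymbol{\mu}}^{\ddagger},\tilde{\boldsymbol{\lambda}}^{\ddagger})$, $\mathcal{N}_{\mathbf{x}_i^{\ddagger}}$, and $\mathcal{N}_{\boldsymbol{\mu}_i^{\ddagger}}$, together with unique $C^1$ maps $\mathbf{X}_i^{\dagger},\mathbf{M}_i$ on $\mathcal{N}^i$ such that $\bar{\mathbf{k}}_i(\mathbf{X}_i^{\dagger},\mathbf{M}_i,\cdot)=\mathbf{0}$, with the solution in $\mathcal{N}_{\mathbf{x}_i^{\ddagger}}\times\mathcal{N}_{\boldsymbol{\mu}_i^{\ddagger}}$ unique.

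\textbf{Step 3: the implicit solution is a genuine KKT point.} This is the main obstacle. I must check that $(\mathbf{X}_i^{\dagger},\mathbf{M}_i)$ satisfies the full KKT conditions \eqref{KKT-1}--\eqref{KKT-4} of $\mathcal{P}^{dd}_{sub,i}$, and not just $\bar{\mathbf{k}}_i=\mathbf{0}$. The argument uses continuity and shrinking of the neighborhoods:
\begin{itemize}
\item Inactive constraints $j\notin\mathcal{A}_i^{\ddagger}$ have $\langle\mathbf{g}_i(\mathbf{x}_i^{\ddagger})\rangle_j<0$, so they stay strictly negative near the anchor, and there $\langle\mathbf{M}_i\rangle_j=0$ by construction.
\item Active constraints $j\in\mathcal{A}_i^{\ddagger}$ satisfy $\langle\mathbf{g}_i(\mathbf{X}_i^{\dagger})\rangle_j=0$ by construction. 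For nonnegativity of $\langle\mathbf{M}_i\rangle_j$, I would invoke strict complementarity $\langle\boldsymbol{\mu}_i^{\ddagger}\rangle_j>0$. This is implicit in the paper's treatment, since invertibility of the KKT-function gradient is tied to a stable active set; if needed, I would state it as implied by the invertibility assumption, as in the proof of Lemma~\ref{primal lem1}. Continuity then keeps $\langle\mathbf{M}_i\rangle_j>0$.
\item Since $\mathbf{x}_i^{\ddagger}\in\operatorname{int}(\mathcal{X}_i)$, shrinking keeps $\mathbf{X}_i^{\dagger}\in\operatorname{int}(\mathcal{X}_i)$, so \eqref{KKT-5} suffices for \eqref{KKT-1}.
\item Consequently the active set at $\mathbf{X}_i^{\dagger}$ equals $\mathcal{A}_i^{\ddagger}$, so $\mathbf{k}_i=\bar{\mathbf{k}}_i$ there.
\end{itemize}

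\textbf{Step 4: assembling the neighborhood.} Set $\mathcal{N}_{(\tilde{\boldsymbol{\mu}}^{\ddagger},\tilde{\boldsymbol{\lambda}}^{\ddagger})}=\bigcap_i\mathcal{N}^i\cap(\mathbb{R}_+^{\tilde r}\times\mathbb{R}^{\tilde m})$, which is relatively open in the nonnegative orthant and is a finite intersection. This gives part 1.

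For part 2, suppose some stationary point in $\mathcal{N}_{\mathbf{x}_i^{\ddagger}}$ has a multiplier in $\mathcal{N}_{\boldsymbol{\mu}_i^{\ddagger}}$. After shrinking, its active set must equal $\mathcal{A}_i^{\ddagger}$, for two reasons. Inactive constraints stay inactive near $\mathbf{x}_i^{\ddagger}$. Multipliers near $\boldsymbol{\mu}_i^{\ddagger}$ are positive on $\mathcal{A}_i^{\ddagger}$, which forces those constraints to be active. Hence it is a zero of $\bar{\mathbf{k}}_i$, and the IFT uniqueness gives both the unique stationary point and the unique multiplier.
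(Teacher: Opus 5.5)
Your route is the same implicit-function-theorem argument on the KKT function that the paper uses (through the proof of Lemma~\ref{primal lem1}). You freeze the active set $\mathcal A_i^{\ddagger}$, apply the theorem to the smooth surrogate $\bar{\mathbf k}_i$, and then check that the implicit solution is a genuine KKT point of $\mathcal P^{dd}_{sub,i}$; this is more careful than the paper. There is, however, a genuine gap in Step~3 and its reuse in Step~4. Two things need strict complementarity $\langle\boldsymbol\mu_i^{\ddagger}\rangle_j>0$: nonnegativity of $\langle\mathbf M_i\rangle_j$ for $j\in\mathcal A_i^{\ddagger}$, and the claim that a nearby stationary point with a nearby multiplier has active set exactly $\mathcal A_i^{\ddagger}$. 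Strict complementarity does \emph{not} follow from invertibility of $\nabla_{(\mathbf x_i,\boldsymbol\mu_i)}\mathbf k_i$. With the active set frozen, that matrix is invertible iff two conditions hold. First, the active gradients $\nabla\langle\mathbf g_i\rangle_j$, $j\in\mathcal A_i^{\ddagger}$, must be linearly independent. Second, the $\mathbf x_i$-Hessian of $L_i+\boldsymbol\mu_i^T\mathbf g_i$, reduced to the null space of the active-constraint Jacobian, must be nonsingular. The sign of the active multipliers plays no role; for affine $\mathbf g_i$, $\boldsymbol\mu_i$ does not even appear in the matrix.

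Here is a concrete instance. Take $I=2$, $x_i\in\mathbb R$, $\mathcal X_i=\mathbb R$, $f_i(x_i)=x_i^2$, and no coupling inequality. Set $\tilde h_1(x_1)=x_1$ and $\tilde h_2(x_2)=-x_2$, so the coupling constraint is $x_1=x_2$, and set $g_i(x_i)=x_i$. Then $\mathbf x^{\ddagger}=(0,0)$ is a regular stationary point with $\tilde\lambda^{\ddagger}=0$ and $\mu_i^{\ddagger}=0$, and both constraints $x_i\le 0$ are active. Each frozen KKT Jacobian is $\begin{pmatrix}2&1\\1&0\end{pmatrix}$, which is invertible, so Assumptions~\ref{dual assump-p} and~\ref{dual assump-stationary-2} hold.

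For $\tilde\lambda>0$, your implicit solution of subproblem~1 is $x_1=0$, $\mu_1=-\tilde\lambda<0$, which is not a KKT point. The true, unique stationary point of the strongly convex subproblem $\min\{x_1^2+\tilde\lambda x_1 : x_1\le 0\}$ is $\min\{0,-\tilde\lambda/2\}$, which is not differentiable at $\tilde\lambda^{\ddagger}=0$. Because $\tilde r=0$, every neighborhood of $\tilde\lambda^{\ddagger}$ contains both signs. So Part~1 of the lemma is false here, and no argument from the stated hypotheses can close your gap.

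The paper's proof does not close it either. It asserts that on some neighborhood of $\mathbf x_i^{\ddagger}$ all inequality constraints remain satisfied and the active sets stay constant. That is impossible for an active constraint with nonzero gradient, which the LICQ implied by invertibility guarantees. The paper also never checks the sign of the implicit multipliers.

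You correctly isolated the missing ingredient, but it must be added as an explicit hypothesis rather than derived from Assumption~\ref{dual assump-stationary-2}: strict complementarity at $(\mathbf x_i^{\ddagger},\boldsymbol\mu_i^{\ddagger})$ for every $i$. With that hypothesis, your Steps~3 and~4 go through as written.
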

\begin{IEEEproof}
	The proof is similar to that of Lemma~\ref{primal lem1}. 
\end{IEEEproof}

\begin{theorem}\label{dual thm1}
	Suppose that Assumptions~\ref{dual assump-p} and \ref{dual assump-stationary-1} (or \ref{dual assump-stationary-2}) are satisfied.
	Let $\mathcal{N}$ (or $\mathcal{N}_{(\tilde{\boldsymbol{\mu}}^{\ddag}, \tilde{\boldsymbol{\lambda}}^{\ddagger})}$) denote the open set specified in Assumption~\ref{dual assump-stationary-1} (or Lemma~\ref{dual lem1}.1).
	Then the following results~hold.
	\begin{enumerate}
		\item $q^{\dag}$, where $\mathbf{x}^{\dag}_i(\tilde{\boldsymbol{\mu}}, \tilde{\boldsymbol{\lambda}})=\mathbf{X}^{\dag}_i(\tilde{\boldsymbol{\mu}}, \tilde{\boldsymbol{\lambda}})$ with $\mathbf{X}_{i}^{\dag}$ specified in Assumption \ref{dual assump-stationary-1} (or Lemma~\ref{dual lem1}.1), is continuously differentiable on $\mathcal{N}$ (or $\mathcal{N}_{(\tilde{\boldsymbol{\mu}}^{\ddag}, \tilde{\boldsymbol{\lambda}}^{\ddagger})}$), and for all $(\tilde{\boldsymbol{\mu}}, \tilde{\boldsymbol{\lambda}})\in\mathcal{N}$ (or $\mathcal{N}_{(\tilde{\boldsymbol{\mu}}^{\ddag}, \tilde{\boldsymbol{\lambda}}^{\ddagger})}$), 
		the gradients  
		$\nabla_{\tilde{\boldsymbol{\mu}}}q^{\dag}(\tilde{\boldsymbol{\mu}}, \tilde{\boldsymbol{\lambda}})$ and $\nabla_{\tilde{\boldsymbol{\lambda}}}q^{\dag}(\tilde{\boldsymbol{\mu}}, \tilde{\boldsymbol{\lambda}})$
		are given by: 
		\begin{align}\label{dual mas-obj-grad-mu}
			&\nabla_{\tilde{\boldsymbol{\mu}}}q^{\dag}(\tilde{\boldsymbol{\mu}}, \tilde{\boldsymbol{\lambda}})=\sum_{i\in \mathcal{I}}
			\tilde{\mathbf{g}}_{i}(\mathbf{X}_{i}^{\dagger}(\tilde{\boldsymbol{\mu}}, \tilde{\boldsymbol{\lambda}})),\\
		\label{dual mas-obj-grad-lambda}
			&\nabla_{\tilde{\boldsymbol{\lambda}}}q^{\dag}(\tilde{\boldsymbol{\mu}}, \tilde{\boldsymbol{\lambda}})=\sum_{i\in \mathcal{I}}
			\tilde{\mathbf{h}}_{i}(\mathbf{X}_{i}^{\dagger}(\tilde{\boldsymbol{\mu}}, \tilde{\boldsymbol{\lambda}}));
		\end{align}
		
		\item If $(\tilde{\boldsymbol{\mu}}^{\dag}, \tilde{\boldsymbol{\lambda}}^{\dag})\in\mathcal{N}$ (or $\mathcal{N}_{(\tilde{\boldsymbol{\mu}}^{\ddag}, \tilde{\boldsymbol{\lambda}}^{\ddagger})}$) is a stationary point of the master problem $\mathcal{P}^{dd,\dag}_{mas}$, then $\mathbf{X}^{\dag}(\tilde{\boldsymbol{\mu}}^{\dag}, \tilde{\boldsymbol{\lambda}}^{\dag})$ is a stationary point of Problem $\mathcal{P}^{dd}$, where $\mathbf{X}^{\dag}(\tilde{\boldsymbol{\mu}}^{\dag}, \tilde{\boldsymbol{\lambda}}^{\dag})=(\mathbf{X}_1^{\dag}(\tilde{\boldsymbol{\mu}}^{\dag}, \tilde{\boldsymbol{\lambda}}^{\dag}), \cdots, \mathbf{X}^{\dag}_I(\tilde{\boldsymbol{\mu}}^{\dag}, \tilde{\boldsymbol{\lambda}}^{\dag}))$ with  $\mathbf{X}_{i}^{\dag}(\tilde{\boldsymbol{\mu}}^{\dag}, \tilde{\boldsymbol{\lambda}}^{\dag})$ being the stationary point of the subproblem $\mathcal{P}^{dd}_{sub, i}$ and $\mathbf{X}_{i}^{\dag}$ specified in Assumption \ref{dual assump-stationary-1} (or Lemma~\ref{dual lem1}.1).
	\end{enumerate}
\end{theorem}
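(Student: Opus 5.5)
First, reduce to Assumption~\ref{dual assump-stationary-1}: under Assumption~\ref{dual assump-stationary-2}, Lemma~\ref{dual lem1}.1 shows Assumption~\ref{dual assump-stationary-1} holds with $\mathcal{N}=\mathcal{N}_{(\tilde{\boldsymbol{\mu}}^{\ddagger},\tilde{\boldsymbol{\lambda}}^{\ddagger})}$, so it suffices to work with $\mathcal{N}$, $\mathbf{X}_i^{\dagger}$, $\mathbf{M}_i$ from Assumption~\ref{dual assump-stationary-1}. The argument is an envelope-theorem computation for part 1, followed by matching the KKT conditions for part 2.

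\textbf{Part 1.} Write $q^{\dagger}(\tilde{\boldsymbol{\mu}},\tilde{\boldsymbol{\lambda}})=\sum_i L_i(\mathbf{X}_i^{\dagger}(\tilde{\boldsymbol{\mu}},\tilde{\boldsymbol{\lambda}}),\tilde{\boldsymbol{\mu}},\tilde{\boldsymbol{\lambda}})$. This is a composition of $C^1$ maps ($f_i,\tilde{\mathbf{g}}_i,\tilde{\mathbf{h}}_i$ are $C^1$ by Assumption~\ref{dual assump-p}, and $\mathbf{X}_i^{\dagger}$ is $C^1$), so $q^{\dagger}$ is $C^1$ on $\mathcal{N}$. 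By the chain rule,
\[
\nabla_{\tilde{\boldsymbol{\mu}}} q^{\dagger}=\sum_i\Bigl(\tilde{\mathbf{g}}_i(\mathbf{X}_i^{\dagger})+\nabla_{\tilde{\boldsymbol{\mu}}}\mathbf{X}_i^{\dagger}\,\nabla_{\mathbf{x}_i}L_i(\mathbf{X}_i^{\dagger},\tilde{\boldsymbol{\mu}},\tilde{\boldsymbol{\lambda}})\Bigr),
\]
and similarly for $\tilde{\boldsymbol{\lambda}}$. Because $\mathbf{X}_i^{\dagger}\in\operatorname{int}(\mathcal{X}_i)$, stationarity in the reduced form \eqref{KKT-5} gives $\nabla_{\mathbf{x}_i}L_i=-\nabla\mathbf{g}_i(\mathbf{X}_i^{\dagger})\mathbf{M}_i$. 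The extra term is therefore $-\nabla_{\tilde{\boldsymbol{\mu}}}\mathbf{X}_i^{\dagger}\,\nabla\mathbf{g}_i(\mathbf{X}_i^{\dagger})\mathbf{M}_i=-\nabla_{\tilde{\boldsymbol{\mu}}}[\mathbf{g}_i(\mathbf{X}_i^{\dagger})]\mathbf{M}_i$, and it remains to show this vanishes. \textbf{This is the main obstacle.}

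To handle it, differentiate complementary slackness $\langle\mathbf{M}_i\rangle_j\langle\mathbf{g}_i(\mathbf{X}_i^{\dagger})\rangle_j=0$, which holds identically on the open set $\mathcal{N}$. Fix a point and a component $j$:
\begin{itemize}
\item If $\langle\mathbf{M}_i\rangle_j>0$ there, then by continuity it stays positive nearby. Hence $\langle\mathbf{g}_i(\mathbf{X}_i^{\dagger})\rangle_j\equiv0$ locally, so its gradient is zero.
\item If $\langle\mathbf{M}_i\rangle_j=0$, the term is multiplied by zero.
\end{itemize}
Either way $\nabla[\langle\mathbf{g}_i(\mathbf{X}_i^{\dagger})\rangle_j]\langle\mathbf{M}_i\rangle_j=0$, which yields \eqref{dual mas-obj-grad-mu}--\eqref{dual mas-obj-grad-lambda}. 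Openness of $\mathcal{N}$ is used here; at boundary points of $\mathbb{R}_+^{\tilde r}$ one uses one-sided limits, but $\mathcal{N}$ is open as assumed.

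\textbf{Part 2.} Stationarity of $(\tilde{\boldsymbol{\mu}}^{\dagger},\tilde{\boldsymbol{\lambda}}^{\dagger})$ for the maximization over $\tilde{\boldsymbol{\mu}}\succeq\mathbf{0}$, $\tilde{\boldsymbol{\lambda}}$ free gives, via Part 1:
\begin{itemize}
\item $\sum_i\tilde{\mathbf{h}}_i(\mathbf{X}_i^{\dagger})=\mathbf{0}$;
\item $\sum_i\tilde{\mathbf{g}}_i(\mathbf{X}_i^{\dagger})\preceq\mathbf{0}$, taking $\mathbf{0}$ and $2\tilde{\boldsymbol{\mu}}^{\dagger}+e_j$ as test points in \eqref{KKT-1};
\item $\tilde{\boldsymbol{\mu}}^{\dagger}\odot\sum_i\tilde{\mathbf{g}}_i(\mathbf{X}_i^{\dagger})=\mathbf{0}$.
\end{itemize}
Now take the multipliers $(\tilde{\boldsymbol{\mu}}^{\dagger},\tilde{\boldsymbol{\lambda}}^{\dagger},\mathbf{M}_i(\tilde{\boldsymbol{\mu}}^{\dagger},\tilde{\boldsymbol{\lambda}}^{\dagger}))$ for Problem $\mathcal{P}^{dd}$. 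The subproblem KKT conditions supply, for each $i$:
\begin{itemize}
\item the $\mathbf{x}_i$-block of the gradient condition, namely $\nabla f_i+\nabla\tilde{\mathbf{g}}_i\tilde{\boldsymbol{\mu}}^{\dagger}+\nabla\tilde{\mathbf{h}}_i\tilde{\boldsymbol{\lambda}}^{\dagger}+\nabla\mathbf{g}_i\mathbf{M}_i=\mathbf{0}$;
\item $\mathbf{M}_i\succeq\mathbf{0}$ and $\mathbf{M}_i\odot\mathbf{g}_i=\mathbf{0}$;
\item $\mathbf{g}_i\preceq\mathbf{0}$ and $\mathbf{X}_i^{\dagger}\in\mathcal{X}_i$.
\end{itemize}
Stacking over $i$ together with the three coupling relations gives exactly \eqref{KKT-1}--\eqref{KKT-4} for $\mathcal{P}^{dd}$ at $\mathbf{X}^{\dagger}(\tilde{\boldsymbol{\mu}}^{\dagger},\tilde{\boldsymbol{\lambda}}^{\dagger})$, by Definition~\ref{def stationary point}.
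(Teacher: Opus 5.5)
Your proof is correct and follows the paper's route, which is the dual analogue of the proof of Theorem~\ref{primal thm1}: an envelope-type chain-rule computation using interior stationarity of the subproblems, followed by matching the master problem's KKT conditions with the subproblems' KKT conditions. The only difference is cosmetic. The paper adds the zero term $\mathbf{M}_i^{T}\mathbf{g}_i(\mathbf{X}_i^{\dagger})$ and kills $\nabla\mathbf{M}_i\,\mathbf{g}_i(\mathbf{X}_i^{\dagger})$ via ``inactive constraint $\Rightarrow$ multiplier locally zero.'' You instead kill the complementary product-rule term $\nabla[\mathbf{g}_i(\mathbf{X}_i^{\dagger})]\mathbf{M}_i$ via ``positive multiplier $\Rightarrow$ constraint locally active,'' which needs only continuity, not differentiability, of $\mathbf{M}_i$.
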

\begin{IEEEproof}
	The proof is similar to that of Theorem~\ref{primal thm1}. 
\end{IEEEproof}

Assumptions~\ref{dual assump-stationary-1} and \ref{dual assump-stationary-2}, Lemma~\ref{dual lem1}, and Theorem~\ref{dual thm1} are similar to Assumptions~\ref{primal assump-stationary-1} and \ref{primal assump-stationary-2}, Lemma~\ref{primal lem1}, and Theorem~\ref{primal thm1}, respectively. We omit similar discussions for briefness.

\subsection{DD-A and Its Local Convergence Analysis}\label{dual section-alg}
By Theorem~\ref{dual thm1}, the key to designing a dual decomposition algorithm for Problem $\mathcal{P}^{dd}$ is to design a master algorithm for the master problem $\mathcal{P}^{dd, \dag}_{mas}$.
Generally, the closed-form expression for $q^{\dag}$ is hard to obtain, and only local information about $q^{\dag}$ (e.g., its value and gradient at a point) is accessible.
Thus, we need to iteratively solve the subproblems $\mathcal{P}_{sub, i}^{dd}$, $i\in\mathcal{I}$ and update the dual variables $\tilde{\boldsymbol{\mu}}$ and $\tilde{\boldsymbol{\lambda}}$ based on the accessible local information of $q^{\dag}$.

Based on this, we propose DD-A for Problem $\mathcal{P}^{dd}$, which utilizes the SCA-based algorithm \cite{scutari2018parallel} to solve the master problem $\mathcal{P}^{dd, \dag}_{mas}$. 
Specifically, at iteration $k$, for fixed $(\tilde{\boldsymbol{\mu}}^{(k)}, \tilde{\boldsymbol{\lambda}}^{(k)})$ (which is obtained at iteration $k-1$), we can divide the algorithm process into two parts.
In the first part, we focus on solving the subproblem $\mathcal{P}_{sub, i}^{dd}$, $i\in\mathcal{I}$ separately. 
First, we obtain an arbitrary stationary point $\mathbf{x}^{\dag}_i(\tilde{\boldsymbol{\mu}}^{(k)}, \tilde{\boldsymbol{\lambda}}^{(k)})$ of the subproblem $\mathcal{P}_{sub, i}^{dd}$ using algorithms for generally nonconvex problems (e.g., MM \cite{sun2017mm} and SCA algorithm \cite{scutari2018parallel}).
Then, we can obtain the corresponding Lagrange multiplier $\boldsymbol{\mu}_i(\tilde{\boldsymbol{\mu}}^{(k)}, \tilde{\boldsymbol{\lambda}}^{(k)})$ by solving the KKT system or KKT conditions.
Note that $\boldsymbol{\mu}_i(\tilde{\boldsymbol{\mu}}^{(k)}, \tilde{\boldsymbol{\lambda}}^{(k)})$ is not utilized in the followed algorithm process but is essential for the local convergence analysis.
Next, we compute $\tilde{\mathbf{g}}_{i}(\mathbf{x}_{i}^{\dagger}(\tilde{\boldsymbol{\mu}}^{(k)}, \tilde{\boldsymbol{\lambda}}^{(k)}))$ and $\tilde{\mathbf{h}}_{i}(\mathbf{x}_{i}^{\dagger}(\tilde{\boldsymbol{\mu}}^{(k)}, \tilde{\boldsymbol{\lambda}}^{(k)}))$.

\begin{algorithm}[t]
	\caption{DD-A}\label{dual alg:alg1}
	\begin{algorithmic}[1]\label{alg1}\small
		\STATE \textbf{initialization}: Set $k=0$ and choose any point $(\tilde{\boldsymbol{\mu}}^{(0)}\succeq \mathbf{0}, \tilde{\boldsymbol{\lambda}}^{(0)})$, $\tau>0$, and $\{\gamma^{(k)}\}_{k\in\mathbb{N}}\subseteq(0,1]$ satisfying (\ref{sca step size}). 
		\REPEAT
		\FOR{all $i\in\mathcal{I}$}
		\STATE Get an arbitrary stationary point $\mathbf{x}^{\dagger}_{i}(\tilde{\boldsymbol{\mu}}^{(k)}, \tilde{\boldsymbol{\lambda}}^{(k)})$ of the generally nonconvex subproblem $\mathcal{P}_{sub, i}^{dd}$. 
		\STATE Obtain the corresponding Lagrange multiplier $\boldsymbol{\mu}_i(\tilde{\boldsymbol{\mu}}^{(k)}, \tilde{\boldsymbol{\lambda}}^{(k)})$.
		\STATE Compute $\tilde{\mathbf{g}}_{i}(\mathbf{x}_{i}^{\dagger}(\tilde{\boldsymbol{\mu}}^{(k)}, \tilde{\boldsymbol{\lambda}}^{(k)}))$ and $\tilde{\mathbf{h}}_{i}(\mathbf{x}_{i}^{\dagger}(\tilde{\boldsymbol{\mu}}^{(k)}, \tilde{\boldsymbol{\lambda}}^{(k)}))$.
		\ENDFOR
		\STATE Compute $\nabla_{\tilde{\boldsymbol{\mu}}} q^{\dag}(\tilde{\boldsymbol{\mu}}^{(k)}, \tilde{\boldsymbol{\lambda}}^{(k)})$ and $\nabla_{\tilde{\boldsymbol{\lambda}}} q^{\dag}(\tilde{\boldsymbol{\mu}}^{(k)}, \tilde{\boldsymbol{\lambda}}^{(k)})$ according to (\ref{dual mas-obj-grad-mu}) and (\ref{dual mas-obj-grad-lambda}), respectively.
		\STATE Compute $\tilde{\boldsymbol{\mu}}^{*}(\tilde{\boldsymbol{\mu}}^{(k)}, \tilde{\boldsymbol{\lambda}}^{(k)})$ and $\tilde{\boldsymbol{\lambda}}^{*}(\tilde{\boldsymbol{\mu}}^{(k)}, \tilde{\boldsymbol{\lambda}}^{(k)})$
		according to (\ref{dual mas-appro-appro-solution-mu}) and (\ref{dual mas-appro-appro-solution-lambda}), respectively.
		\STATE Update $\tilde{\boldsymbol{\mu}}$ and  $\tilde{\boldsymbol{\lambda}}$ according to (\ref{dual smooth-mu}) and (\ref{dual smooth-lambda}), respectively.
		\STATE Set $k \gets k+1$.
		\UNTIL{Some termination criterion is met.}
	\end{algorithmic}
	\label{alg1}
\end{algorithm}

In the second part, we focus on solving the master problem $\mathcal{P}^{dd, \dag}_{mas}$ by the SCA-based algorithm \cite{scutari2018parallel}. 
First, we compute $\nabla_{\tilde{\boldsymbol{\mu}}}q^{\dag}(\tilde{\boldsymbol{\mu}}^{(k)}, \tilde{\boldsymbol{\lambda}}^{(k)})$ and $\nabla_{\tilde{\boldsymbol{\lambda}}}q^{\dag}(\tilde{\boldsymbol{\mu}}^{(k)}, \tilde{\boldsymbol{\lambda}}^{(k)})$ according to \eqref{dual mas-obj-grad-mu} and \eqref{dual mas-obj-grad-lambda}, respectively, where $\mathbf{X}^{\dag}_i(\tilde{\boldsymbol{\mu}}^{(k)}, \tilde{\boldsymbol{\lambda}}^{(k)})=\mathbf{x}^{\dag}_i(\tilde{\boldsymbol{\mu}}^{(k)}, \tilde{\boldsymbol{\lambda}}^{(k)})$, and choose the following approximate function of $q^{\dag}$ at $(\tilde{\boldsymbol{\mu}}^{(k)}, \tilde{\boldsymbol{\lambda}}^{(k)})$ (i.e., the second-order Taylor approximation of $q^{\dag}$ near $(\tilde{\boldsymbol{\mu}}^{(k)}, \tilde{\boldsymbol{\lambda}}^{(k)})$) \cite{scutari2018parallel}:
\begin{equation*}\label{dual mas-appro-appro-object}
	\begin{aligned}
		&Q^{\dag}(\tilde{\boldsymbol{\mu}}, \tilde{\boldsymbol{\lambda}}; \tilde{\boldsymbol{\mu}}^{(k)}, \tilde{\boldsymbol{\lambda}}^{(k)})\triangleq\\
		&-\frac{\tau_{\tilde{\boldsymbol{\mu}}}}{2}\|\tilde{\boldsymbol{\mu}}-\tilde{\boldsymbol{\mu}}^{(k)}\|_{2}^2 + 		\nabla_{\tilde{\boldsymbol{\mu}}} q^{\dag}(\tilde{\boldsymbol{\mu}}^{(k)}, \tilde{\boldsymbol{\lambda}}^{(k)})^{T}(\tilde{\boldsymbol{\mu}}-\tilde{\boldsymbol{\mu}}^{(k)})\\
		&-\frac{\tau_{\tilde{\boldsymbol{\lambda}}}}{2}\|\tilde{\boldsymbol{\lambda}}- \tilde{\boldsymbol{\lambda}}^{(k)}\|_{2}^2 + 	\nabla_{\tilde{\boldsymbol{\lambda}}} q^{\dag}(\tilde{\boldsymbol{\mu}}^{(k)}, \tilde{\boldsymbol{\lambda}}^{(k)})^{T}(\tilde{\boldsymbol{\lambda}}- \tilde{\boldsymbol{\lambda}}^{(k)}),
	\end{aligned}
\end{equation*}
with $\tau_{\tilde{\boldsymbol{\mu}}},\tau_{\tilde{\boldsymbol{\lambda}}}> 0$.
Note that $-Q^{\dagger}$ satisfies Assumption~\ref{SCA-assump-surrogate-object}.
The corresponding approximate problem of the master problem $\mathcal{P}^{dd, \dag}_{mas}$ is given by:
\begin{equation*}\label{dual mas-appro-appro}
	\begin{aligned}
		\mathcal{P}^{dd,\dag,(k)}_{mas}:\ \max_{\tilde{\boldsymbol{\mu}},\tilde{\boldsymbol{\lambda}}} \ &Q^{\dag}(\tilde{\boldsymbol{\mu}}, \tilde{\boldsymbol{\lambda}}; \tilde{\boldsymbol{\mu}}^{(k)}, \tilde{\boldsymbol{\lambda}}^{(k)})\\  
		s.t. \ 
		&\text{\eqref{dual mas-decoup}}. 
	\end{aligned} 
\end{equation*}
This problem is strongly convex since $Q^{\dagger}$ is strongly concave. 
Then, by the KKT conditions, we obtain the unique optimal point of Problem $\mathcal{P}^{dd,\dag,(k)}_{mas}$ in closed form:
\begin{align}
	&\hspace{-0.2cm}\tilde{\boldsymbol{\mu}}^{*}(\tilde{\boldsymbol{\mu}}^{(k)}, \tilde{\boldsymbol{\lambda}}^{(k)})= \left[\tilde{\boldsymbol{\mu}}^{(k)}+\frac{ \nabla_{\tilde{\boldsymbol{\mu}}}q^{\dag}(\tilde{\boldsymbol{\mu}}^{(k)}, \tilde{\boldsymbol{\lambda}}^{(k)})}{\tau_{\tilde{\boldsymbol{\mu}}}}\right]_{+}, \label{dual mas-appro-appro-solution-mu}\\
	&\hspace{-0.2cm}\tilde{\boldsymbol{\lambda}}^{*}(\tilde{\boldsymbol{\mu}}^{(k)}, \tilde{\boldsymbol{\lambda}}^{(k)})=\tilde{\boldsymbol{\lambda}}^{(k)}+\frac{\nabla_{\tilde{\boldsymbol{\lambda}}}q^{\dag}(\tilde{\boldsymbol{\mu}}^{(k)}, \tilde{\boldsymbol{\lambda}}^{(k)})}{\tau_{\tilde{\boldsymbol{\lambda}}}}. \label{dual mas-appro-appro-solution-lambda}
\end{align}
Next, 
the dual variables  
$\tilde{\boldsymbol{\mu}}$ and  $\tilde{\boldsymbol{\lambda}}$ are updated according to: 
\begin{align}
	\tilde{\boldsymbol{\mu}}^{(k+1)}=\tilde{\boldsymbol{\mu}}^{(k)}+\gamma^{(k)}(\tilde{\boldsymbol{\mu}}^{*}(\tilde{\boldsymbol{\mu}}^{(k)}, \tilde{\boldsymbol{\lambda}}^{(k)}) - \tilde{\boldsymbol{\mu}}^{(k)}),\label{dual smooth-mu}\\
	\tilde{\boldsymbol{\lambda}}^{(k+1)}=\tilde{\boldsymbol{\lambda}}^{(k)}+\gamma^{(k)}(\tilde{\boldsymbol{\lambda}}^{*}(\tilde{\boldsymbol{\mu}}^{(k)}, \tilde{\boldsymbol{\lambda}}^{(k)}) - \tilde{\boldsymbol{\lambda}}^{(k)}),\label{dual smooth-lambda}
\end{align}
where $\gamma^{(k)}$ is a step size satisfying \eqref{sca step size}.
The detailed procedure is summarized in Algorithm~\ref{dual alg:alg1}.\footnote{Similar to \cite{boyd2007notes,palomar2006tutorial},   $\mathbf{x}^{\dagger}(\tilde{\boldsymbol{\mu}}^{(k)},\tilde{\boldsymbol{\lambda}}^{(k)})$ obtained in Step~4 may be infeasible, 
i.e., $\sum_{i\in\mathcal{I}}\tilde{\mathbf{g}}_i(\mathbf{x}_{i}^{\dagger}(\tilde{\boldsymbol{\mu}}^{(k)}, \tilde{\boldsymbol{\lambda}}^{(k)}))\!\npreceq\!\mathbf{0}$~or~$\sum_{i\in\mathcal{I}}\tilde{\mathbf{h}}_i(\mathbf{x}_{i}^{\dagger}(\tilde{\boldsymbol{\mu}}^{(k)}, \tilde{\boldsymbol{\lambda}}^{(k)}))\!\neq\!\mathbf{0}$ for some $k$, 
and the feasibility is only guaranteed at the limit point of $\{\mathbf{x}^{\dagger}(\tilde{\boldsymbol{\mu}}^{(k)},\tilde{\boldsymbol{\lambda}}^{(k)})\}_{k\in\mathbb{N}}$. In Steps~8-10, parallel computations can be applied for conducting projections, matrix (vector) multiplications and additions.}

To prove the local convergence of Algorithm~\ref{dual alg:alg1} under Assumptions~\ref{dual assump-p} and \ref{dual assump-stationary-2}, we make the following assumption on $\nabla q^{\dag}(\tilde{\boldsymbol{\mu}}, \tilde{\boldsymbol{\lambda}})$ which exists by Theorem~\ref{dual thm1}.1.

\begin{assumption}\label{dual assump-mas-appro-object}
	$q^{\dag}(\tilde{\boldsymbol{\mu}}, \tilde{\boldsymbol{\lambda}})$ is bounded upper and the gradient $\nabla q^{\dag}(\tilde{\boldsymbol{\mu}}, \tilde{\boldsymbol{\lambda}})$ is Lipschitz continuous on a neighborhood of $(\tilde{\boldsymbol{\mu}}^{\ddagger}, \tilde{\boldsymbol{\lambda}}^{\ddagger})$ with $(\tilde{\boldsymbol{\mu}}^{\ddagger}, \tilde{\boldsymbol{\lambda}}^{\ddagger})$ specified in Assumption~\ref{dual assump-stationary-2}.2.
\end{assumption}

Then, based on Theorem~\ref{dual thm1} and \cite[Theorem~3.8]{scutari2018parallel}, we can show the local convergence of Algorithm~\ref{dual alg:alg1}.

\begin{theorem}\label{dual thm2}
	Suppose that Assumptions~\ref{regularity}, \ref{dual assump-p}, \ref{dual assump-stationary-2}, and \ref{dual assump-mas-appro-object} are satisfied. 
	The points $\tilde{\boldsymbol{\mu}}^{\ddagger}$, $\tilde{\boldsymbol{\lambda}}^{\ddagger}$, $\mathbf{x}_{i}^{\ddagger}$,  $\boldsymbol{\mu}_{i}^{\ddagger}$, $i\in\mathcal{I}$ are specified in Assumption~\ref{dual assump-stationary-2}.2.
	The sequences $\{(\tilde{\boldsymbol{\mu}}^{(k)}, \tilde{\boldsymbol{\lambda}}^{(k)})\}_{k\in\mathbb{N}}$ $\{\mathbf{x}^{\dagger}_{i}(\tilde{\boldsymbol{\mu}}^{(k)}, \tilde{\boldsymbol{\lambda}}^{(k)})\}_{k\in\mathbb{N}}$, $\{\boldsymbol{\mu}_i(\tilde{\boldsymbol{\mu}}^{(k)}, \tilde{\boldsymbol{\lambda}}^{(k)})\}_{k\in\mathbb{N}}$, $i\in\mathcal{I}$, are generated by Algorithm~\ref{dual alg:alg1}.
	Then, there exists a neighborhood $\mathcal{N}_{(\tilde{\boldsymbol{\mu}}^{\ddagger}, \tilde{\boldsymbol{\lambda}}^{\ddagger})}$ of $(\tilde{\boldsymbol{\mu}}^{\ddagger}, \tilde{\boldsymbol{\lambda}}^{\ddagger})$ and neighborhoods $\mathcal{N}_{\mathbf{x}_{i}^{\ddagger}}$,
	$\mathcal{N}_{\boldsymbol{\mu}_{i}^{\ddagger}}$, $i\in\mathcal{I}$ of $\mathbf{x}_{i}^{\ddagger}$, $\boldsymbol{\mu}_{i}^{\ddagger}$, $i\in\mathcal{I}$, respectively, such that if $\{(\tilde{\boldsymbol{\mu}}^{(k)}, \tilde{\boldsymbol{\lambda}}^{(k)})\}_{k\in\mathbb{N}}\subseteq\mathcal{N}_{(\tilde{\boldsymbol{\mu}}^{\ddagger}, \tilde{\boldsymbol{\lambda}}^{\ddagger})}$, $\{\mathbf{x}^{\dagger}_{i}(\tilde{\boldsymbol{\mu}}^{(k)}, \tilde{\boldsymbol{\lambda}}^{(k)})\}_{k\in\mathbb{N}}\subseteq\mathcal{N}_{\mathbf{x}_{i}^{\ddagger}}$,  $\{\boldsymbol{\mu}_i(\tilde{\boldsymbol{\mu}}^{(k)}, \tilde{\boldsymbol{\lambda}}^{(k)})\}_{k\in\mathbb{N}}\subseteq\mathcal{N}_{\boldsymbol{\mu}_{i}^{\ddagger}}$, $i \in\mathcal{I}$,
	then the following results hold:
	\begin{enumerate}
		\item The limit point of any convergent subsequence (at least one exists) $\{(\tilde{\boldsymbol{\mu}}^{(k)}, \tilde{\boldsymbol{\lambda}}^{(k)})\}_{k\in\mathcal{K}}$ with $\mathcal{K}\subseteq\mathbb{N}$, denoted by $(\tilde{\boldsymbol{\mu}}^{(\infty)}, \tilde{\boldsymbol{\lambda}}^{(\infty)})$, is a stationary point of the master problem~$\mathcal{P}^{dd, \dag}_{mas}$;
		
		\item For all $i\in\mathcal{I}$, the subsequence $\{\mathbf{x}_{i}^{\dag}(\tilde{\boldsymbol{\mu}}^{(k)}, \tilde{\boldsymbol{\lambda}}^{(k)})\}_{k\in\mathcal{K}}$ with $\mathcal{K}$ specified in 1) converges, and $\mathbf{x}^{(\infty)}=(\mathbf{x}^{(\infty)}_{1}, \cdots, \mathbf{x}^{(\infty)}_{I})$ is a stationary point of Problem $\mathcal{P}^{dd}$, where $\mathbf{x}^{(\infty)}_{i}\in \operatorname{int}(\mathcal{X}_{i})$ is the limit point of $\{\mathbf{x}_{i}^{\dag}(\tilde{\boldsymbol{\mu}}^{(k)}, \tilde{\boldsymbol{\lambda}}^{(k)})\}_{k\in\mathcal{K}}$.
	\end{enumerate}
\end{theorem}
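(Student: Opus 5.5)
The proof mirrors that of Theorem~\ref{primal thm2}, with the SCA convergence result for the master problem now taken from \cite[Theorem~3.8]{scutari2018parallel}. The plan is to first pin down the neighborhoods, then recast Algorithm~\ref{dual alg:alg1} as an SCA method for the smooth master problem, and finally lift the dual limit point back to the primal problem.

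\textbf{Step 1: fixing the neighborhoods.} We take $\mathcal{N}_{(\tilde{\boldsymbol{\mu}}^{\ddagger}, \tilde{\boldsymbol{\lambda}}^{\ddagger})}$, $\mathcal{N}_{\mathbf{x}_{i}^{\ddagger}}$, and $\mathcal{N}_{\boldsymbol{\mu}_{i}^{\ddagger}}$ from Lemma~\ref{dual lem1}. If necessary, we shrink them so that three things hold:
\begin{itemize}
\item $\mathcal{N}_{(\tilde{\boldsymbol{\mu}}^{\ddagger}, \tilde{\boldsymbol{\lambda}}^{\ddagger})}$ lies inside the neighborhood of Assumption~\ref{dual assump-mas-appro-object}, with compact closure contained in the domain of the implicit functions;
\item $\mathcal{N}_{\mathbf{x}_{i}^{\ddagger}}$ lies inside $\operatorname{int}(\mathcal{X}_i)$;
\item the closures of all these sets still lie in the region where the implicit function theorem applies.
\end{itemize}
By Lemma~\ref{dual lem1}.2, the hypothesis that the generated sequences stay in these neighborhoods forces uniqueness. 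Concretely, $\mathbf{x}^{\dagger}_i(\tilde{\boldsymbol{\mu}}^{(k)}, \tilde{\boldsymbol{\lambda}}^{(k)}) = \mathbf{X}^{\dagger}_i(\tilde{\boldsymbol{\mu}}^{(k)}, \tilde{\boldsymbol{\lambda}}^{(k)})$ and $\boldsymbol{\mu}_i(\cdot) = \mathbf{M}_i(\cdot)$ along the iterates. So the stationary points returned by the subproblem solvers are exactly the branch selected by the implicit function.

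\textbf{Step 2: the master iteration is SCA on a smooth problem.} By Theorem~\ref{dual thm1}.1, $q^{\dagger}$ is $C^1$ on the neighborhood, with gradients given by \eqref{dual mas-obj-grad-mu}--\eqref{dual mas-obj-grad-lambda}. Hence Step~8 computes the true gradient. The surrogate $-Q^{\dagger}$ satisfies Assumption~\ref{SCA-assump-surrogate-object}: it is strongly convex with constant $\min(\tau_{\tilde{\boldsymbol{\mu}}}, \tau_{\tilde{\boldsymbol{\lambda}}})$, its gradient matches that of $-q^{\dagger}$ at the current point, and its gradient is continuous. The feasible set $\mathbb{R}^{\tilde r}_+\times\mathbb{R}^{\tilde m}$ is closed and convex, with no nonconvex constraints. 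The quantities \eqref{dual mas-appro-appro-solution-mu}--\eqref{dual mas-appro-appro-solution-lambda} are the exact minimizers of the surrogate problem, and the updates \eqref{dual smooth-mu}--\eqref{dual smooth-lambda} use step sizes satisfying \eqref{sca step size}. In addition, $-q^{\dagger}$ is bounded below and has a Lipschitz gradient on the relevant set, by Assumption~\ref{dual assump-mas-appro-object}.

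\textbf{Step 3: convergence of the dual iterates.} Applying \cite[Theorem~3.8]{scutari2018parallel} to $\min -q^{\dagger}$ restricted to the neighborhood, every limit point of $\{(\tilde{\boldsymbol{\mu}}^{(k)}, \tilde{\boldsymbol{\lambda}}^{(k)})\}$ is stationary for $\mathcal{P}^{dd,\dagger}_{mas}$. Because the iterates stay in a bounded set, a convergent subsequence exists.

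\textbf{Main obstacle.} The delicate point is that \cite[Theorem~3.8]{scutari2018parallel} presumes the smoothness assumptions hold globally, whereas ours hold only locally. I would handle this by noting that its proof uses only the descent inequality $-q^{\dagger}(\cdot^{(k+1)}) \le -q^{\dagger}(\cdot^{(k)}) - \gamma^{(k)}(c - \gamma^{(k)}L/2)\|\hat{\mathbf{z}}^{(k)} - \mathbf{z}^{(k)}\|^2$ along the iterates. This inequality needs the Lipschitz property only on the segment between consecutive iterates. We can make sure that segment lies in the neighborhood by taking the neighborhood convex, for instance a ball intersected with the orthant. The limit point also belongs to the closure of the neighborhood, which by Step~1 still lies in the implicit-function region.

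\textbf{Step 4: lifting to the primal problem.} By continuity of $\mathbf{X}^{\dagger}_i$ and $\mathbf{M}_i$, the subsequence $\mathbf{x}^{\dagger}_i(\tilde{\boldsymbol{\mu}}^{(k)}, \tilde{\boldsymbol{\lambda}}^{(k)})$, $k\in\mathcal{K}$, converges to $\mathbf{x}^{(\infty)}_i = \mathbf{X}^{\dagger}_i(\tilde{\boldsymbol{\mu}}^{(\infty)}, \tilde{\boldsymbol{\lambda}}^{(\infty)})$, which lies in $\operatorname{int}(\mathcal{X}_i)$. Theorem~\ref{dual thm1}.2 then gives that $\mathbf{x}^{(\infty)}$ is a stationary point of $\mathcal{P}^{dd}$.
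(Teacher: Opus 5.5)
Your proposal is correct and follows essentially the paper's route: the paper's proof mirrors that of Theorem~\ref{primal thm2}, identifying the returned stationary points and multipliers with the implicit-function branch via Lemma~\ref{dual lem1}.2, treating Steps~8--10 of Algorithm~\ref{dual alg:alg1} as the SCA method covered by \cite[Theorem~3.8]{scutari2018parallel} for $\mathcal{P}^{dd,\dag}_{mas}$, and lifting the limit point through continuity of $\mathbf{X}^{\dag}_i$ and Theorem~\ref{dual thm1}.2. The only difference is technical: the paper handles the merely local smoothness by extending the master objective to a globally $C^1$ function with Lipschitz gradient and then applies the SCA theorem as a black box, whereas you localize the SCA argument on a convex neighborhood with compact closure (which also keeps the limit point inside the set where Theorem~\ref{dual thm1}.2 applies) --- note that upgrading $\liminf$ to $\lim$ in that argument also needs Lipschitz continuity of the best-response map $(\tilde{\boldsymbol{\mu}},\tilde{\boldsymbol{\lambda}})\mapsto(\tilde{\boldsymbol{\mu}}^{*},\tilde{\boldsymbol{\lambda}}^{*})$ in \eqref{dual mas-appro-appro-solution-mu}--\eqref{dual mas-appro-appro-solution-lambda} between non-consecutive iterates, not only the descent inequality, and your neighborhood-wide Lipschitz assumption supplies this.
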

\begin{IEEEproof}
	The proof is similar to that of Theorem~\ref{primal thm2}.
\end{IEEEproof}

\section{Successive Dual Decomposition}\label{sd section}
In this section, we consider a nonconvex problem with coupling constraints and present a successive dual decomposition method and a corresponding algorithm for this problem, referred to as SDD-M and \mbox{SDD-A}, respectively.

\subsection{Problem Formulation}
Consider the following nonconvex problem:
\begin{align}
	\mathcal{P}^{sdd}:\ \min_{\mathbf{x}} \ &f(\mathbf{x})\triangleq \sum_{i\in\mathcal{I}}f_i(\mathbf{x}_i)\nonumber\\  
	s.t. \ 
	&\sum_{i\in \mathcal{I}}\mathbf{A}_{i}\mathbf{x}_{i}+\mathbf{b}=\mathbf{0}, \label{sd coup-eq-cons}\\
	&\text{\eqref{dual coup-ineq-cons}, \eqref{dual decoup-ineq-cons}, \eqref{dual decoup-cvx-cons}},    \nonumber 
\end{align}
where $\mathbf{x}$, $f_i$, $\tilde{\mathbf{g}}_i$, and $\mathbf{g}_i$, $i\in\mathcal{I}$ are the same as in Problem $\mathcal{P}^{dd}$,  $\mathbf{A}_{i}\in\mathbb{R}^{\tilde{m}\times n_{i}}$, $i\in\mathcal{I}$, and $\mathbf{b}\in\mathbb{R}^{\tilde{m}}$.
In contrast, the coupling linear equality constraint of Problem $\mathcal{P}^{sdd}$ in \eqref{sd coup-eq-cons} is less general than the coupling (generally) nonlinear equality constraint of Problem $\mathcal{P}^{dd}$ in \eqref{dual coup-eq-cons}.
Thus, Problem $\mathcal{P}^{sdd}$ is a special case of Problem $\mathcal{P}^{dd}$. 
Accordingly, by removing  4) in Assumption~\ref{dual assump-p}, we have the following assumptions on Problem $\mathcal{P}^{sdd}$.

\begin{assumption}[Assumptions on Problem $\mathcal{P}^{sdd}$]\label{sd p}
	Assumptions~\ref{dual assump-p}.1-\ref{dual assump-p}.3 and \ref{dual assump-p}.5 hold true.
\end{assumption}

As discussed in Section \ref{dual section-problem}, the basic dual decomposition method and algorithm for convex problems in \cite{boyd2007notes,palomar2006tutorial} also cannot produce a stationary point of nonconvex Problem $\mathcal{P}^{sdd}$.

\subsection{SDD-M and Its Theoretical Analysis}\label{sd section-method}
Now, we present SDD-M. 
Specifically, first, we approximate nonconvex Problem $\mathcal{P}^{sdd}$ with a sequence of successively refined convex approximate problems $\mathcal{P}^{sdd,(k)}, k\in\mathbb{N}$.
Each convex problem is obtained by approximating the objective function and constraints of Problem $\mathcal{P}^{sdd}$ at $\mathbf{x}^{(k)}$:
\begin{align}
		\mathcal{P}^{sdd,(k)}:\ 
		\min_{\mathbf{x}} \ &F(\mathbf{x}; \mathbf{x}^{(k)})\triangleq
		\sum_{i\in\mathcal{I}}F_i(\mathbf{x}_i; \mathbf{x}_i^{(k)})\nonumber\\  
		s.t. \ 
		&\sum_{i\in\mathcal{I}}\widetilde{\mathbf{G}}_i(\mathbf{x}_i; \mathbf{x}_{i}^{(k)})\preceq \mathbf{0}, \label{sd coup-ineq}\\
		&\mathbf{G}_{i}(\mathbf{x}_{i}; \mathbf{x}_{i}^{(k)})\preceq \mathbf{0}, \ i\in\mathcal{I},  \\
		&\text{\eqref{sd coup-eq-cons}, \eqref{dual decoup-cvx-cons},}\nonumber        
	\end{align}  
where $F_i$ is a strongly convex approximation of $f_i$, satisfying Assumption~\ref{SCA-assump-surrogate-object},
and $\widetilde{\mathbf{G}}_i$ and $\mathbf{G}_{i}$ are convex approximations of $\tilde{\mathbf{g}}_i$ and $\mathbf{g}_{i}$, respectively, satisfying Assumption \ref{SCA-assump-surrogate-constriant}.

Then, we apply the basic dual decomposition method for convex problems \cite{boyd2007notes,palomar2006tutorial} to Problem~$\mathcal{P}^{sdd,(k)}$. 
Specifically, we form the partial Lagrangian: 
$L(\mathbf{x},\tilde{\boldsymbol{\mu}},\tilde{\boldsymbol{\lambda}};\mathbf{x}^{(k)})\triangleq\sum_{i\in \mathcal{I}}L_{i}(\mathbf{x}_{i},\tilde{\boldsymbol{\mu}},\tilde{\boldsymbol{\lambda}};\mathbf{x}_{i}^{(k)})
+ \tilde{\boldsymbol{\lambda}}^T\mathbf{b}$, where $L_{i}(\mathbf{x}_{i},\tilde{\boldsymbol{\mu}},\tilde{\boldsymbol{\lambda}};\mathbf{x}_{i}^{(k)})\triangleq F_{i}(\mathbf{x}_{i}; \mathbf{x}_{i}^{(k)})+\tilde{\boldsymbol{\mu}}^T\widetilde{\mathbf{G}}_i(\mathbf{x}_i; \mathbf{x}_{i}^{(k)})+ \tilde{\boldsymbol{\lambda}}^T\mathbf{A}_i\mathbf{x}_i$, and the weights $\tilde{\boldsymbol{\mu}}$ and $\tilde{\boldsymbol{\lambda}}$ are the Lagrange multipliers associated with the constraints in \eqref{sd coup-ineq} and \eqref{sd coup-eq-cons}, respectively.
Then, the problem for obtaining the dual function, i.e.,
$q(\tilde{\boldsymbol{\mu}}, \tilde{\boldsymbol{\lambda}}; \mathbf{x}^{(k)})=\inf\{L(\mathbf{x},\tilde{\boldsymbol{\mu}},\tilde{\boldsymbol{\lambda}};\mathbf{x}^{(k)})\ | \ \mathbf{G}_{i}(\mathbf{x}_{i}; \mathbf{x}_{i}^{(k)})\preceq \mathbf{0}, \mathbf{x}_{i}\in\mathcal{X}_i, i\in\mathcal{I}\}
$, can be separated into $I$ (dual) subproblems, one for each $i\in\mathcal{I}$: 
	\begin{align}
		\mathcal{P}^{sdd, (k)}_{sub, i}:\ \min_{\mathbf{x}_{i}} \ &L_{i}(\mathbf{x}_{i},\tilde{\boldsymbol{\mu}},\tilde{\boldsymbol{\lambda}};\mathbf{x}_{i}^{(k)})\nonumber\\
		s.t. \ 
		&\mathbf{G}_{i}(\mathbf{x}_{i}; \mathbf{x}_{i}^{(k)})\preceq \mathbf{0},  \\
		&\mathbf{x}_{i}\in \mathcal{X}_i.      
	\end{align}  
Then, the  master (dual) problem is as follows:
	\begin{align}
		\mathcal{P}^{sdd,(k)}_{mas}: \max_{\tilde{\boldsymbol{\mu}}, \tilde{\boldsymbol{\lambda}}} \ &q(\tilde{\boldsymbol{\mu}}, \tilde{\boldsymbol{\lambda}}; \mathbf{x}^{(k)})
		\!=\!\!\sum_{i\in\mathcal{I}}L_i(\mathbf{x}^{*,(k)}_i\!(\tilde{\boldsymbol{\mu}}, \tilde{\boldsymbol{\lambda}}), \tilde{\boldsymbol{\mu}}, \tilde{\boldsymbol{\lambda}}; \mathbf{x}_{i}^{(k)}\!)\nonumber\\
		&\ \qquad\qquad\qquad +\tilde{\boldsymbol{\lambda}}^T\mathbf{b}\nonumber\\
		s.t. \ 
		&\text{\eqref{dual mas-decoup}},\nonumber
	\end{align}  
where $\mathbf{x}^{*,(k)}_i(\tilde{\boldsymbol{\mu}}, \tilde{\boldsymbol{\lambda}})$ represents the unique optimal point of the convex subproblem $\mathcal{P}_{sub, i}^{sdd, (k)}$ for $(\tilde{\boldsymbol{\mu}}, \tilde{\boldsymbol{\lambda}})\in\mathbb{R}_{+}^{\tilde{r}}\times\mathbb{R}^{\tilde{m}}$. 
Let $(\tilde{\boldsymbol{\mu}}^{*, (k)}, \tilde{\boldsymbol{\lambda}}^{*, (k)})$ denote an optimal point of the convex master problem $\mathcal{P}^{sdd, (k)}_{mas}$.
Notably, under Assumption~\ref{regularity}, $(\tilde{\boldsymbol{\mu}}^{*, (k)}, \tilde{\boldsymbol{\lambda}}^{*, (k)})$ exists, and strong duality holds for convex Problem $\mathcal{P}^{sdd, (k)}$ \cite{scutari2017parallel}.
Thus, $\mathbf{x}^{*,(k)}(\tilde{\boldsymbol{\mu}}^{*, (k)}, \tilde{\boldsymbol{\lambda}}^{*, (k)})$ is the unique optimal point of convex Problem $\mathcal{P}^{sdd, (k)}$.

Next, taking a step from the given $\mathbf{x}^{(k)}$ towards $\mathbf{x}^{*,(k)}(\tilde{\boldsymbol{\mu}}^{*, (k)}, \tilde{\boldsymbol{\lambda}}^{*, (k)})$, we set:
\begin{equation}\label{sd smooth}
	\begin{aligned}
		\mathbf{x}^{(k+1)}
		=\mathbf{x}^{(k)}+\gamma^{(k)}\left( \mathbf{x}^{*,(k)}(\tilde{\boldsymbol{\mu}}^{*, (k)}, \tilde{\boldsymbol{\lambda}}^{*, (k)})-\mathbf{x}^{(k)}\right),
	\end{aligned}
\end{equation}
where $\gamma^{(k)}$ is a step size satisfying \eqref{sca step size}.
The proposed \mbox{SDD-M} for Problem $\mathcal{P}^{sdd}$ is given by the convex approximate problems $\mathcal{P}^{sdd, (k)}$,  the subproblems $\mathcal{P}^{sdd, (k)}_{sub,i}$, $i\in\mathcal{I}$, the master problems $\mathcal{P}^{sdd, (k)}_{mas}$, and the updates in \eqref{sd smooth}, $k\in\mathbb{N}$.

The effectiveness of SDD-M can be easily concluded based on \cite[Theorem~2]{scutari2017parallel}.

\begin{statement}\label{sd thm-1}
	If Assumptions~\ref{regularity}-\ref{SCA-assump-surrogate-constriant} and \ref{sd p} are satisfied, then at least one limit point of $\{\mathbf{x}^{(k)}\}_{k\in\mathbb{N}}$ generated by SDD-M is a stationary point of Problem $\mathcal{P}^{sdd}$.
\end{statement}

\begin{algorithm}[t]
	\caption{Dual Decomposition Algorithm \cite{boyd2007notes,palomar2006tutorial} for Problem $\mathcal{P}^{sdd,(k)}$}\label{sd alg:alg1-DD}
	\begin{algorithmic}[1]\label{alg1}\small
		\STATE \textbf{initialization}: 
		Set $t=0$ and $(\tilde{\boldsymbol{\mu}}^{(0)}, \tilde{\boldsymbol{\lambda}}^{(0)})=(\tilde{\boldsymbol{\mu}}^{*, (k)}, \tilde{\boldsymbol{\lambda}}^{*, (k)})$; 
		choose $\{\gamma_{in}^{(t)}\}_{t\in\mathbb{N}}\subseteq(0,+\infty)$ satisfying  $\sum_{t=0}^{\infty}\gamma_{in}^{(t)}=\infty$ and  $\sum_{t=0}^{\infty}(\gamma_{in}^{(t)})^2<\infty$.
		\REPEAT
		\FOR{all $i\in\mathcal{I}$}
		\STATE Get the unique optimal point $\mathbf{x}_{i}^{*,(k)}(\tilde{\boldsymbol{\mu}}^{(t)}, \tilde{\boldsymbol{\lambda}}^{(t)})$ of the convex subproblem $\mathcal{P}^{sdd,(k)}_{sub, i}$.
		\STATE Compute $\widetilde{\mathbf{G}}_i(\mathbf{x}_{i}^{*,(k)}(\tilde{\boldsymbol{\mu}}^{(t)}, \tilde{\boldsymbol{\lambda}}^{(t)}))$ and $\mathbf{A}_i\mathbf{x}^{*,(k)}_i(\tilde{\boldsymbol{\mu}}^{(t)}, \tilde{\boldsymbol{\lambda}}^{(t)})$.
		\ENDFOR
		\STATE Compute~$\nabla_{\tilde{\boldsymbol{\mu}}}q(\tilde{\boldsymbol{\mu}}^{(t)}, \tilde{\boldsymbol{\lambda}}^{(t)}; \mathbf{x}^{(k)})$~and~$\nabla_{\tilde{\boldsymbol{\lambda}}}q(\tilde{\boldsymbol{\mu}}^{(t)}, \tilde{\boldsymbol{\lambda}}^{(t)}; \mathbf{x}^{(k)})$~by:
		\begin{align*}
			&\nabla_{\tilde{\boldsymbol{\mu}}}q(\tilde{\boldsymbol{\mu}}^{(t)}, \tilde{\boldsymbol{\lambda}}^{(t)}; \mathbf{x}^{(k)})\!=\! \sum_{i\in\mathcal{I}}\widetilde{\mathbf{G}}_i(\mathbf{x}_{i}^{*,(k)}(\tilde{\boldsymbol{\mu}}^{(t)}, \tilde{\boldsymbol{\lambda}}^{(t)}); \mathbf{x}_{i}^{(k)}),\\
			&\nabla_{\tilde{\boldsymbol{\lambda}}}q(\tilde{\boldsymbol{\mu}}^{(t)}, \tilde{\boldsymbol{\lambda}}^{(t)}; \mathbf{x}^{(k)})\!=\! \sum_{i\in\mathcal{I}}\mathbf{A}_i\mathbf{x}^{*,(k)}_i(\tilde{\boldsymbol{\mu}}^{(t)}, \tilde{\boldsymbol{\lambda}}^{(t)})+\mathbf{b}.
		\end{align*}
		\STATE Update $\tilde{\boldsymbol{\mu}}$ and $\tilde{\boldsymbol{\lambda}}$ according to:
		\begin{align*}
			&\tilde{\boldsymbol{\mu}}^{(t+1)}= \left[\tilde{\boldsymbol{\mu}}^{(t)} + \gamma_{in}^{(t)} \nabla_{\tilde{\boldsymbol{\mu}}}q(\tilde{\boldsymbol{\mu}}^{(t)}, \tilde{\boldsymbol{\lambda}}^{(t)}; \mathbf{x}^{(k)})\right]_{+},\\
			&\tilde{\boldsymbol{\lambda}}^{(t+1)}= \tilde{\boldsymbol{\lambda}}^{(t)} + \gamma_{in}^{(t)} \nabla_{\tilde{\boldsymbol{\lambda}}}q(\tilde{\boldsymbol{\mu}}^{(t)}, \tilde{\boldsymbol{\lambda}}^{(t)}; \mathbf{x}^{(k)}).
		\end{align*}
		\STATE Set $t \gets t+1$.
		\UNTIL{Some termination criterion is met.}
	\end{algorithmic}
	\label{alg1}
\end{algorithm}

\begin{algorithm}[t]
	\caption{SDD-A}\label{sd alg:alg1-SCA}
	\begin{algorithmic}[1]\label{alg1}\small
		\STATE \textbf{initialization}: Set $k=0$ and choose any feasible point $\mathbf{x}^{(0)}$ and $\{\gamma^{(k)}\}_{k\in\mathbb{N}}\subseteq(0,1]$ satisfying (\ref{sca step size}).
		\REPEAT
		\STATE Obtain the unique optimal point $\mathbf{x}^{*,(k)}(\tilde{\boldsymbol{\mu}}^{*, (k)}, \tilde{\boldsymbol{\lambda}}^{*, (k)})$ of the convex approximate problem $\mathcal{P}^{sdd,(k)}$ by Algorithm~\ref{sd alg:alg1-DD}.
		\STATE Update $\mathbf{x}$ by (\ref{sd smooth}).
		\STATE Set $k \gets k+1$.
		\UNTIL{Some termination criterion is met.}
	\end{algorithmic}
	\label{alg1}
\end{algorithm}

\subsection{SDD-A and Its Convergence Analysis}\label{sd section-alg}
Different from SPD-M in Section~\ref{sp section-method}, $q$ is differentiable \cite[Proposition~B.22]{bertsekas2016nonlinear}, and both its value and gradient are accessible.
Then, we present SDD-A for Problem $\mathcal{P}^{sdd}$. 
Specifically, first, for fixed $k$, we adopt the basic dual decomposition algorithm for convex problems \cite{boyd2007notes,palomar2006tutorial} to solve Problem $\mathcal{P}^{sdd,(k)}$. 
The detailed procedure is summarized in Algorithm~\ref{sd alg:alg1-DD}.\footnote{In Steps~5, 7, and 8, parallel computations can be applied for conducting projections, matrix (vector) multiplications and additions.}
In Step 4, we can solve the convex subproblem $\mathcal{P}^{sdd,(k)}_{sub, i}$ using algorithms for convex problems, e.g., interior-point methods. 
Then, we present the complete SDD-A in Algorithm~\ref{sd alg:alg1-SCA}.
Notably, SDD-A slightly extends SCADD \cite{scutari2017parallel} to nonconvex problems with coupling linear equality constraints.

Based on the convergence results of the projected gradient algorithm in \cite[Proposition 8.2.6]{bertsekas2003convex} and Statement~\ref{sd thm-1}, we can give the convergence of Algorithm~\ref{sd alg:alg1-SCA}.
\begin{statement}\label{sd thm-2}
	If Assumptions~\ref{regularity}-\ref{SCA-assump-surrogate-constriant} and \ref{sd p} are satisfied, then at least one limit point of $\{\mathbf{x}^{(k)}\}_{k\in\mathbb{N}}$ generated by Algorithm~\ref{sd alg:alg1-SCA} is a stationary point of Problem $\mathcal{P}^{sdd}$.
\end{statement}

\section{Comparisons}\label{sec compare}
\begin{table}[t]
	\centering 
	\caption{Comparison of the proposed methods and algorithms.}
	\begin{tabular}{|m{2.25cm}<{\centering}|m{2.4cm}<{\centering}|m{2.8cm}<{\centering}|} 
		\hline
		{} & Decomposition & Successive decomposition \\ 
		\hline
		Equality constraint & Generally nonlinear &  Linear  \\ 
		\hline	
		Decomposition principle  & Directly decompose the original nonconvex problems or the dual problems & Successively approximate the original nonconvex problems into convex ones and apply standard decomposition methods for convex problems\\ 
		\hline	
		Algorithm structure  & Possibly single-loop & Inherently double-loop \\ 
		\hline	
		Assumption  & Difficult to check & Easy to satisfy \\ 
		\hline	
	\end{tabular}
	\label{table comp}
\end{table}

First, we compare the proposed decomposition methods and algorithms (i.e., \mbox{PD-M/A} and \mbox{DD-M/A}) with the successive decomposition methods and algorithms (i.e., \mbox{SPD-M/A} and \mbox{SDD-M/A}) for nonconvex problems, as Table~\ref{table comp} presents. 
\begin{enumerate}
	\item \textbf{Equality constraint:} \mbox{PD-M/A} and \mbox{DD-M/A} can deal with nonconvex problems with nonlinear equality constraints, i.e., Problems $\mathcal{P}^{pd}$ and $\mathcal{P}^{dd}$, whereas \mbox{SPD-M/A} and \mbox{SDD-M/A} can only handle nonconvex problems with linear equality constraints, i.e., Problems $\mathcal{P}^{spd}$ and $\mathcal{P}^{sdd}$, due to the limitations of the SCA-based algorithm \cite{scutari2017parallel}.
	
	\item \textbf{Decomposition principle:} \mbox{PD-M} and \mbox{DD-M} directly decompose the original nonconvex problem $\mathcal{P}^{pd}$ and the dual problem of the original nonconvex problem $\mathcal{P}^{dd}$, respectively. 
	In contrast, \mbox{SPD-M} and \mbox{SDD-M} first successively approximate the original nonconvex problems $\mathcal{P}^{spd}$ and $\mathcal{P}^{sdd}$ into sequences of convex approximate problems $\mathcal{P}^{spd,(k)}$ and $\mathcal{P}^{sdd,(k)}$, $k\in\mathbb{N}$ (possibly destroying original decomposition structures) and then apply standard primal and dual decomposition methods for convex problems \cite{boyd2007notes,palomar2006tutorial} to these convex approximate problems, respectively. 
	
	\item \textbf{Algorithm structure:} \mbox{PD-M} and \mbox{DD-M} can be single-loop iterative algorithms when the stationary points of all subproblems can be obtained in closed-form, whereas \mbox{SPD-M} and \mbox{SDD-M} are inherently double-loop iterative algorithms with an outer loop for successive convex approximation and an inner loop for decomposition.
	
	\item \textbf{Assumption:} \mbox{PD-M/A} and \mbox{DD-M/A} require some strong assumptions (e.g., Assumptions~\ref{primal assump-stationary-2} and \ref{dual assump-stationary-2}) that are relatively difficult to check, whereas \mbox{SPD-M/A} and \mbox{SDD-M/A} require assumptions that are usually easy to~satisfy. 
\end{enumerate}
Therefore, \mbox{PD-M/A} and \mbox{DD-M/A} apply to a broader range of nonconvex problems, exploit more decomposition structures, and yield possibly simpler algorithm structures but require stronger conditions than \mbox{SPD-M/A} and \mbox{SDD-M/A}. 
For commonly applicable nonconvex problems, the proposed four algorithms exhibit their own gains in respective nonconvex problems, increasing the chance of achieving a better tradeoff between convergence performance and computation time.

Next, we illustrate the connections between the proposed methods and algorithms (i.e., PD-M/A, SPD-M/A, DD-M/A, and SDD-M/A) for nonconvex problems and the basic decomposition methods and algorithms for convex problems \cite{boyd2007notes,palomar2006tutorial}. 
Specifically, \mbox{PD-M} and \mbox{DD-M} can reduce to the basic ones for convex problems \cite{boyd2007notes,palomar2006tutorial}, and \mbox{PD-A} and \mbox{DD-A} extend the basic subgradient-based ones for convex problems \cite{boyd2007notes,palomar2006tutorial}.
Moreover, \mbox{SPD-M/A} and \mbox{SDD-M/A} differentiate from the basic ones for convex problems \cite{boyd2007notes,palomar2006tutorial} offering new decomposition methods and algorithms for convex problems. 
Notice that when nonconvex Problems $\mathcal{P}^{pd}$, $\mathcal{P}^{spd}$, $\mathcal{P}^{dd}$, and $\mathcal{P}^{sdd}$ reduce to convex problems, the proposed methods and algorithms naturally produce optimal points.

\section{Extensions}\label{section extension}
In this section, we extend the proposed methods and algorithms (i.e., PD-M/A, SPD-M/A, DD-M/A, and SDD-M/A) to indirect and two-level decomposition methods and algorithnms.

\subsection{Indirect Decomposition}

Like the convex case, Problem $\mathcal{P}^{pd}$ can be converted to a special case of Problem $\mathcal{P}^{dd}$, and vice versa. 
Thus, PD-M/A and DD-M/A can be indirectly applied to Problem $\mathcal{P}^{dd}$ and Problem $\mathcal{P}^{pd}$, respectively.

Problem $\mathcal{P}^{pd}$, suited for PD-M/A, can be equivalently converted to Problem $\mathcal{P}^{pd-dd}$ by replacing the optimization variables and introducing linear equality constraints:
\begin{align}
	\mathcal{P}^{pd-dd}:\ \min_{\mathbf{z}} \ &\sum_{i\in\{0\}\cup\mathcal{I}}f_i(\mathbf{z}_i)\nonumber\\  
	s.t. \ 
	&\mathbf{C}_i\mathbf{z}_{i}= \mathbf{z}_{0}, \ i\in\mathcal{I}, \label{sp-d coup-eq-cons-1}\\
	&\tilde{\mathbf{h}}_i(\mathbf{z}_i)= \mathbf{0}, \ i\in\mathcal{I},\label{sp-d coup-eq-cons-2}\\
	&\tilde{\mathbf{g}}_i(\mathbf{z}_i)\preceq \mathbf{0}, \ i\in\mathcal{I},\label{sp-d decoup-ineq-cons-1}\\
	&\mathbf{g}_i(\mathbf{C}'_i\mathbf{z}_i)\preceq \mathbf{0}, \ i\in\{0\}\cup\mathcal{I},\label{sp-d decoup-ineq-cons-2}\\
	&\mathbf{z}_{i}\in \mathcal{Z}_i, \ i\in\{0\}\cup\mathcal{I},\label{sp-d decoup-cvx-cons}
\end{align}
where $\mathbf{z}_{0}\triangleq\mathbf{y}\in\mathbb{R}^{n_0}$, 
$\mathbf{C}'_0\triangleq\mathbf{I}_{n_0}$,
$\mathcal{Z}_0\triangleq\mathcal{Y}$, 
$\mathbf{z}_{i}\triangleq(\mathbf{x}_{i}, \mathbf{y})\in\mathbb{R}^{n_i+n_0}$, $\mathbf{C}_i\triangleq\begin{bmatrix}	\mathbf{0}_{n_0\times n_i} & \mathbf{I}_{n_0}\end{bmatrix}$,
$\mathbf{C}'_i\triangleq\begin{bmatrix}
	\mathbf{I}_{n_i} & \mathbf{0}_{n_i\times n_0}
\end{bmatrix}$,
$\mathcal{Z}_i\triangleq\mathcal{X}_i\times \mathcal{Y}$,
$i\in\mathcal{I}$, and the constraints in \eqref{sp-d coup-eq-cons-1} are additionally introduced. 
Problem  $\mathcal{P}^{pd}$ is equivalent to Problem $\mathcal{P}^{pd-dd}$ (which follows immediately from Lemma~\ref{lemma equivalent-noncvx-1} in Appendix~\ref{app equivalent nonconvex prob}). 
Letting $\mathbf{A}_{0}\triangleq-\begin{bmatrix}
	\mathbf{I}_{n_0} &\cdots & \mathbf{I}_{n_0}
\end{bmatrix}^{T}\in\mathbb{R}^{n_0r\times n_0}$ and $\mathbf{A}_{i}\triangleq\begin{bmatrix}
	\mathbf{0} & \cdots & \mathbf{C}_{i}^{T} & \cdots & \mathbf{0}
\end{bmatrix}^{T}\in\mathbb{R}^{n_0r\times (n_i+n_0)}$, $i\in\mathcal{I}$, the constraints in \eqref{sp-d coup-eq-cons-1} can be rewritten as $\sum_{i\in\{0\}\cup\mathcal{I}}\mathbf{A}_{i}\mathbf{z}_{i}=\mathbf{0}$.
Let $\tilde{m}\triangleq\sum_{i\in \mathcal{I}}\tilde{m}_i$.
Denote $\tilde{\mathbf{H}}_0(\mathbf{z}_0)\triangleq\mathbf{0}$.
For all $i\in\mathcal{I}$, denote $\tilde{\mathbf{H}}_i:\mathcal{Z}_i\rightarrow\mathbb{R}^{\tilde{m}}$ as  
\begin{align*}
	&\langle\tilde{\mathbf{H}}_i(\mathbf{z}_i)\rangle_j\triangleq\\
	&\begin{cases}
		\langle\tilde{\mathbf{h}}_i(\mathbf{z}_i)\rangle_j, & j=1+\sum_{i'=1}^{i-1}\tilde{m}_{i'},\cdots,1+\sum_{i'=1}^{i}\tilde{m}_{i'}\\
		0,& \text{otherwise}
	\end{cases},
\end{align*} 
for $i\in\mathcal{I}$.
Then,   \eqref{sp-d coup-eq-cons-2} can be rewritten as $\sum_{i\in\{0\}\cup\mathcal{I}}\tilde{\mathbf{H}}_{i}(\mathbf{z}_{i})=\mathbf{0}$.
Thus, Problem $\mathcal{P}^{pd-dd}$ can be viewed as a special case of Problem $\mathcal{P}^{dd}$ without the coupling inequality constraint in \eqref{dual coup-ineq-cons}.
Consequently, Problem $\mathcal{P}^{pd}$ can be indirectly solved by applying DD-M/A to Problem $\mathcal{P}^{pd-dd}$.

In addition, Problem $\mathcal{P}^{dd}$, suited for DD-M/A, can be equivalently converted to Problem $\mathcal{P}^{dd-pd}$ by introducing new variables, inequality constraints, and equality constraints:
\begin{align}
	\mathcal{P}^{dd-pd}:\ \min_{\mathbf{x}, \mathbf{z}} \ &\sum_{i\in\mathcal{I}}f_i(\mathbf{x}_i)\nonumber\\  
	s.t. \ 
	&\tilde{\mathbf{g}}_i(\mathbf{x}_i)-\mathbf{z}_i\preceq\mathbf{0}, \ i\in\mathcal{I},\label{sd-p coup-ineq}\\
	&\tilde{\mathbf{h}}_i(\mathbf{x}_i)-\mathbf{z}_{i+I}=\mathbf{0}, \ i\in\mathcal{I},\label{sd-p coup-eq}\\
	&\mathbf{z}\in \mathcal{Z}, \label{sd-p decoup-cvx-z}\\
	&\text{\eqref{dual coup-ineq-cons}, \eqref{dual decoup-cvx-cons}},\nonumber
\end{align}
where $\mathbf{z}_{i}\in\mathbb{R}^{\tilde{r}}$, $\mathbf{z}_{i+I}\in\mathbb{R}^{\tilde{m}}$, $i\in\mathcal{I}$, 
and $\mathcal{Z}\triangleq\{\mathbf{z}\in \mathbb{R}^{\tilde{r}I+\tilde{m}I}| \sum_{i\in\mathcal{I}}\mathbf{z}_i\preceq\mathbf{0}, \sum_{i\in \mathcal{I}}\mathbf{z}_{i+I}=\mathbf{0}\}$. 
Problem  $\mathcal{P}^{dd}$ is equivalent to Problem $\mathcal{P}^{dd-pd}$ (which follows immediately from Lemma~\ref{lemma equivalent-noncvx-2} in Appendix~\ref{app equivalent nonconvex prob}).
Obviously, Problem $\mathcal{P}^{dd-pd}$ is a special case of Problem $\mathcal{P}^{spd}$ without the separable inequality constraints for $\mathbf
y$ in \eqref{primal decoup-ineq-cons-y} and the coupling variable $\mathbf{z}$ in the objective function.
Accordingly, Problem $\mathcal{P}^{dd}$ can be indirectly solved by applying PD-M/A to Problem $\mathcal{P}^{dd-pd}$.

	\subsection{Two-Level Decomposition}
	Similar to the convex counterpart, we can repeatedly apply PD-M/A and DD-M/A at different levels to decouple problems with both coupling variables and constraints:
	\begin{align}
		\mathcal{P}^{tld}:\ \min_{\mathbf{x}, \mathbf{y}} \ &
		\sum_{i\in\mathcal{I}}f_i(\mathbf{x}_i,\mathbf{y})+f_0(\mathbf{y})&&\nonumber\\  
		s.t. \ 
		&\sum_{i\in\mathcal{I}}\tilde{\mathbf{g}}_i(\mathbf{x}_i,\mathbf{y})\preceq\mathbf{0},&&\label{md coup-ineq-cons}\\ 
		&\sum_{i\in\mathcal{I}}\tilde{\mathbf{h}}_i(\mathbf{x}_i,\mathbf{y})=\mathbf{0},&&\label{md coup-eq-cons}\\
		&\text{\eqref{primal decoup-ineq-cons-x}-\eqref{primal decoup-cvx-cons-y}}, &&\nonumber
	\end{align}           
	where 
	$\mathbf{x}$, $\mathbf{y}$, $f_i$, and $\mathbf{g}_{i}$, $i\in \{0\}\cup\mathcal{I}$ are the same as in Problem $\mathcal{P}^{pd}$, 
	$\tilde{\mathbf{g}}_i:\mathcal{U}_i\times \mathcal{V}\rightarrow \mathbb{R}^{\tilde{r}}$,
	and $\tilde{\mathbf{h}}_i:\mathcal{U}_i\times \mathcal{V}\rightarrow \mathbb{R}^{\tilde{m}}$, 
	$i\in \mathcal{I}$.
	In the following, we introduce two two-level decomposition techniques for solving Problem $\mathcal{P}^{tld}$.
	
	1) At the first level, we apply DD-M/A to deal with the coupling constraints in \eqref{md coup-ineq-cons} and \eqref{md coup-eq-cons}.       
	Then, at the second level, we use the (S)PD-M/A to deal with the coupling variable $\mathbf{y}$.
	Applying PD-M/A at the second level leads to a two-level optimization decomposition with a master dual problem, a secondary master primal problem, and the subproblems; 
	applying SPD-M/A at the second level leads to a two-level optimization decomposition with a master dual problem, a sequence of secondary master primal problems, and a sequence of subproblems.\footnote{At the first level, SDD-M/A is not applicable due to the existence of the generally nonlinear equality constraint in (\ref{md coup-eq-cons}).}

	2) At the first level, we use PD-M/A to deal with the coupling variable $\mathbf{y}$.
	Then, at the second level, we apply \mbox{DD-M/A} to deal with the coupling constraints in \eqref{md coup-ineq-cons} and \eqref{md coup-eq-cons}. 
	Applying DD-M/A at the second level results in a two-level optimization decomposition with a master primal problem, a secondary master dual problem, and the subproblems.\footnote{SPD-M/A and SDD-M/A are not applicable at the first and second levels, respectively, due to the existence of the generally nonlinear equality constraint in (\ref{md coup-eq-cons}).}

\section{Examples}\label{sec exm}
In this section, we present examples of the proposed algorithms, i.e., PD-A, SPD-A, DD-A, and \mbox{SDD-A}.\footnote{PD-A has been successfully applied to zero-forcing beamforming in single-cell MIMO networks \cite{zhaizfprimal}, with numerical results demonstrating the superior advantages of its parallel implementation over existing algorithms. 
Real-world applications of SPD-A, DD-A, and SDD-A remain open and warrant further study.}

\begin{example}[Example of Problem $\mathcal{P}^{pd}$]\label{exm 1}
The constraints in \eqref{primal coup-ineq-cons}, \eqref{primal decoup-ineq-cons-x}, and \eqref{primal decoup-ineq-cons-y} are absent, $y\in\mathbb{R}$, $\mathcal{Y}=[0,1]$, $f_0(y)= a(y-y_0)^2$ with $a\in\mathbb{R}_{++}$ and $y_0\in[0,1]$, and for all $i\in\mathcal{I}$, $\mathbf{x}_{i}=(x_{i,1},x_{i,2})\in\mathbb{R}^2$,  $\mathcal{X}_{i}=[-1,1]\times\mathbb{R}$, 
\begin{align}\label{exm 1-f}
	f_i(\mathbf{x}_i, y)=&
    \sum_{j=1}^{3}a_{i,j}(y)x_{i,1}^{j}
	+\sum_{j=1}^{2}b_{i,j} x_{i,2}^{j},
\end{align}   
with  $a_{i,j}(y)\triangleq \sum_{l=0}^{2}a_{i,j,l}y^{l}$, $a_{i,j,l}\in\mathbb{R}$, $b_{i,2}\in\mathbb{R}_{++}$, and $b_{i,1}\in\mathbb{R}$, and 
\begin{equation}\label{exam 1-h}
	\begin{aligned}
		h_i(\mathbf{x}_i, y)= -c_{i,2}\frac{x_{i,1}^2}{y+1}+c_{i,1}x_{i,2}+c_{i,0},
	\end{aligned}   
\end{equation}
with $c_{i,2}\in\mathbb{R}_{+}$ and $c_{i,j}\in\mathbb{R}$.
\end{example}

PD-A can be applied to obtain stationary points of Example~\ref{exm 1}, whereas SPD-A cannot due to the nonlinear equality constraints in \eqref{primal coup-eq-cons}.
Specifically, in each iteration of PD-A, each subproblem $\mathcal{P}_{sub, i}^{pd}$ and Problem $\mathcal{P}^{pd, \dag, (k)}_{mas}$ have closed-form stationary and optimal points, respectively.

\begin{example}[Example of Problem $\mathcal{P}^{spd}$]\label{exm 2}
This example is the same as Example~\ref{exm 1}, except that the equality constraints in \eqref{primal coup-eq-cons} with $h_i(\mathbf{x}_i,y)$ given by \eqref{exam 1-h} are replaced by the inequality constraints in \eqref{primal coup-ineq-cons} with $g_i(\mathbf{x}_i,y)= h_i(\mathbf{x}_i,y)$ given by \eqref{exam 1-h}. 
\end{example}

PD-A and SPD-A can be applied to obtain stationary points of Example~\ref{exm 2}. 
Specifically, in each iteration of PD-A, each subproblem $\mathcal{P}_{sub, i}^{pd}$ and Problem $\mathcal{P}^{pd, \dag, (k)}_{mas}$ have closed-form stationary and optimal points, respectively. 
In each iteration of SPD-A, the approximate objective and constraint functions are chosen as:
\begin{align}
	&F_0(y;y^{(k)})= f_0(y)\label{exm 2-F0}\\
	&F_{i,\mathbf{x}}(\mathbf{x}_{i};\mathbf{x}_{i}^{(k)},y^{(k)})= f_{i}(x_{i,1}^{(k)},x_{i,2},y^{(k)})\nonumber\\
	&+\nabla_{x_{i,1}}f_i(\mathbf{x}_{i}^{(k)},y^{(k)}) (x_{i,1}-x_{i,1}^{(k)})+\frac{\tau_{\mathbf{x}}}{2}(x_{i,1}-x_{i,1}^{(k)})^2,\label{exm 2-F}\\ 
	&F_{i,y}(y;\mathbf{x}_{i}^{(k)},y^{(k)})=\frac{\tau_{y}}{2}(y-y^{(k)})^2\nonumber\\
	&\quad \quad \quad \quad \quad \quad \quad \quad+\nabla_{y} f_i(\mathbf{x}_{i}^{(k)},y^{(k)})(y-y^{(k)}), \label{exm 2-Fy}\\
	&G_{i}(\mathbf{x}_{i}, y;\mathbf{x}_{i}^{(k)},y^{(k)})= c_{i,1}x_{i,2}+c_{i,0}+g_i(\mathbf{x}_{i}^{(k)},y^{(k)})\nonumber\\
	&\quad\quad\quad\quad\quad\quad\quad\quad\quad+\nabla_{x_{i,1}} g_i(\mathbf{x}_{i}^{(k)},y^{(k)})(x_{i,1}-x_{i,1}^{(k)})\nonumber\\
	&\quad\quad\quad\quad\quad\quad\quad\quad\quad+\nabla_{y} g_i(\mathbf{x}_{i}^{(k)},y^{(k)})(y-y^{(k)}),\label{exm 2-G}
\end{align}
where $i\in\mathcal{I}$ and $\tau_{\mathbf{x}}>0$, and each subproblem $\mathcal{P}^{spd, (k)}_{sub, i}$ has a unique closed-form optimal point.

\begin{example}[Example of Problem $\mathcal{P}^{spd}$]\label{exm 3}
This example is the same as Example~\ref{exm 2} except that the objective function's component $f_i(\mathbf{x}_{i},y)$ given by \eqref{exm 1-f} is replaced by:
\begin{equation}\label{exm 3-f}
	\begin{aligned}
		f_i(\mathbf{x}_i, y)=\sum_{j=1}^{3}a_{i,j}(y)x_{i,1}^{j+2}	+\sum_{j=1}^{2}b_{i,j} x_{i,2}^{j},
	\end{aligned}   
\end{equation}
where $a_{i,j}(y)$ and $b_{i,j}$
are the same as in Example~\ref{exm 2} (and~\ref{exm 1}).
\end{example}

PD-A and SPD-A can be applied to obtain stationary points of Example~\ref{exm 3}. 
Specifically, in each iteration of \mbox{PD-A}, each subproblem $\mathcal{P}_{sub, i}^{pd}$ has to be solved numerically, and Problem $\mathcal{P}^{pd, \dag, (k)}_{mas}$ has a unique closed-form optimal point.
In each iteration of SPD-A, the same approximate functions in \eqref{exm 2-F0}-\eqref{exm 2-G} for Example~\ref{exm 2} can be applied, and each subproblem $\mathcal{P}^{spd, (k)}_{sub, i}$ has a unique closed-form optimal point.

\begin{example}[Example of Problem $\mathcal{P}^{dd}$]\label{exm 4}
The constraints in \eqref{dual coup-ineq-cons} and \eqref{dual decoup-ineq-cons} are absent, and for all $i\in\mathcal{I}$, $x_{i}\in\mathbb{R}$, $\mathcal{X}_{i}=[-0.05,0.05]$, and 
\begin{align}
	&f_i(\mathbf{x}_i)= \sum_{j=1}^{3}a_{i,j}x_{i}^{j},\label{exm 4-f}\\
	&h_i(\mathbf{x}_i)= \sum_{j=1}^{3}b_{i,j}x_{i}^{j}+b/I,\label{exm 4-h}
\end{align}	   
with $a_{i,j}, b_{i,j},b\in\mathbb{R}$.
\end{example}

DD-A can be applied to obtain stationary points of Example~\ref{exm 4} with closed-form stationary points for all subproblems per iteration, whereas SDD-A cannot due to the nonlinear equality constraints in \eqref{dual coup-eq-cons}.

\begin{example}[Example of Problem $\mathcal{P}^{sdd}$]\label{exm 5}
This example is the same as Example~\ref{exm 4}, except that the equality constraints in \eqref{dual coup-eq-cons} with $h_i(\mathbf{x}_i)$ given by \eqref{exm 4-h} are replaced by the inequality constraints in \eqref{dual coup-ineq-cons} with $g_i(\mathbf{x}_i)= h_i(\mathbf{x}_i)$ given by \eqref{exm 4-h}. 
\end{example}

DD-A and SDD-A can be applied to obtain stationary points of Example~\ref{exm 5}. 
Specifically, in each iteration of DD-A, each subproblem $\mathcal{P}_{sub, i}^{dd}$ has a closed-form stationary point.
In each iteration of SDD-A, the approximate objective and constraint functions are chosen as: 
\begin{equation}
	\begin{aligned}
		&F_{i}(x_{i};x_{i}^{(k)})= 3a_{i,3}(x_{i}^{(k)})^2(x_i-x_i^{(k)})+a_{i,1}x_{i}\\
		&+\begin{cases}
			\frac{\tau}{2}(x_{i}-x_{i}^{(k)})^2+2a_{i,2}x_{i}^{(k)}(x_i-x_i^{(k)}), & a_{i,2}\leq0\\
			a_{i,2}x_{i}^2, &  a_{i,2}>0
		\end{cases}, 
	\end{aligned}
\end{equation} 
\begin{equation}\label{exm 5-G}
	\begin{aligned}
		&G_{i}(x_{i};x_{i}^{(k)})=  3b_{i,3}(x_{i}^{(k)})^2(x_i-x_i^{(k)})+b_{i,3}(x_{i}^{(k)})^3 \\	
		&+\frac{L}{2}(x_{i}-x_{i}^{(k)})^2+b_{i,1}x_{i}+b/I\\
		&+\begin{cases}
			2b_{i,2}x_{i}^{(k)}(x_i-x_i^{(k)})+b_{i,2}(x_{i}^{(k)})^2, & b_{i,2}\leq0\\
			b_{i,2}x_{i}^2, &  b_{i,2}>0
		\end{cases},
	\end{aligned}
\end{equation} 
where $i\in\mathcal{I}$, $\tau>0$, and $L\geq0$, and each subproblem $\mathcal{P}^{sdd, (k)}_{sub, i}$ has a unique closed-form optimal point.

\begin{example}[Example of Problem $\mathcal{P}^{sdd}$]\label{exm 6}
This example is the same as Example~\ref{exm 5} except that the objective function's component $f_i(\mathbf{x}_{i})$ given by \eqref{exm 4-f} is replaced by: 
\begin{align}
	f_i(\mathbf{x}_i)= \sum_{j=1}^{3}a_{i,j}x_{i}^{j+2},
\end{align}	
where $a_{i,j}$ is the same as in Example~\ref{exm 5} (and \ref{exm 4}).
\end{example}

DD-A and SDD-A can be applied to obtain stationary points of Example~\ref{exm 6}. 
Specifically, in each iteration of DD-A, each subproblem $\mathcal{P}_{sub, i}^{dd}$ has to be solved numerically.
In each iteration of SDD-A, the approximate objective function is chosen as: 
\begin{equation}
	F_{i}(x_{i};x_{i}^{(k)})= \nabla f_{i}(x_{i}^{(k)})(x_i-x_i^{(k)})+	\frac{\tau}{2}(x_{i}-x_{i}^{(k)})^2,
\end{equation} 
with $\tau>0$, the approximate constraint function $G_{i}(x_{i};x_{i}^{(k)})$ is chosen by \eqref{exm 5-G} as in Example~\ref{exm 5}, and each subproblem $\mathcal{P}^{sdd, (k)}_{sub, i}$ has a unique closed-form optimal point.

\section{Numerical Results}\label{sec numerical results}
In this section, we numerically evaluate the proposed four algorithms,  i.e., PD-A, SPD-A, DD-A, and SDD-A, with three baselines, i.e., SCAPD \cite{scutari2017parallel}, SQP \cite{gill2011sequential}, and AL \cite{fernandez2012local}, on the examples presented in Section~\ref{sec exm}.  
Notably, SCADD \cite{scutari2017parallel} is not considered, since it is identical to SDD-A on Examples~\ref{exm 5}-\ref{exm 6}.  
Our experimental environment is Ubuntu 24.04.1.
For hardware, GPU is Nvidia GeForce RTX 4090, and CPU is AMD EPYC 9654 Processor.

For all examples, we set $I=1000$ and randomly and independently generate $10$ samples of the problem parameters.\footnote{In numerical optimization, a problem is typically categorized as large-scale when the number of optimization variables $n$ reaches the order of $10^3$ or higher, since at this scale, standard centralized solvers (e.g., interior-point methods) often become computationally prohibitive due to their $O(n^3)$ per-iteration computational complexity \cite{boyd2004convex}.}
Specifically, for Examples~\ref{exm 1}-\ref{exm 3}, the samples of  $a_{i,j,l}$, $j=1,2,3$, $b_{i,1}$, and $c_{i,j}$, $j=0,1$
are generated according to i.i.d. $\mathcal{N}(0,1)$, 
the samples of $c_{i,2}$ are generated according to the half-normal distribution of $\mathcal{N}(0,1)$, 
the samples of $a$ and $b_{i,2}$ are generated according to i.i.d.  $\mathcal{U}(0,5000)$, 
and the samples of $y_0$ are generated according to i.i.d. $\mathcal{U}(0,1)$.
For Examples~\ref{exm 4}-\ref{exm 6}, the samples of $a_{i,j}$ and $b_{i,j}$  
are generated according to i.i.d. $\mathcal{N}(0,1)$.
For Example~\ref{exm 4} and Examples~\ref{exm 5}-\ref{exm 6}, the samples of $b$ are generated according to i.i.d. $\mathcal{N}(0,(0.001)^2)$ and $\mathcal{U}(-0.001,0)$, respectively.
Note that for Examples~\ref{exm 4}-\ref{exm 6}, the original objective values may be negative or near-zero.
To ensure a clear and consistent visualization of the performance comparisons in Fig. \ref{fig dual}, we have applied specific constant offsets to the results of all considered algorithms within Examples~\ref{exm 4}-\ref{exm 6}. 
Specifically, constant offsets of $41$, $40$, and $0.06$ are added to the objective values for Examples~\ref{exm 4}, \ref{exm 5}, and \ref{exm 6}, respectively. 
Note that each linear shift is applied uniformly across all algorithms with in each specific example, thereby preserving the relative performance gaps.

\begin{table*}[ht]
	\centering
	\caption{Algorithm parameter.}
		\begin{tabularx}{\textwidth}{|m{1.3cm}|>{\centering\arraybackslash}X|} 
			\hline
			\centering Algorithm &  Algorithm parameter 
			\\
			\hline
			\centering PD-A
			& Ex.~1: $\tau=0$, $\gamma^{(0)}=1$, $\alpha=1$, $\beta=5$, $\epsilon=1$; 
			Ex.~2: $\tau=0$, $\gamma^{(0)}=1$, $\alpha=1$, $\beta=5$, $\epsilon=1$; 
			Ex.~3: $\tau=5$, $\gamma^{(0)}=1$, $\alpha=1$, $\beta=5$, $\epsilon=1$. 
			\\
			\hline
			\centering SPD-A
			& Ex.~2: $\tau_{\mathbf{x}}=10^8$, $\tau_{y}=0$, $\gamma^{(0)}=1$, $\alpha=0$, $\beta=1$, $\epsilon=0.1$, $\gamma_{in}^{(0)}=1$,  $\beta_{in}=0.5$, $\sigma=0.05$, $T=10$; 
			Ex.~3: $\tau_{\mathbf{x}}=10^8$, $\tau_{y}=0$, $\gamma^{(0)}=1$, $\alpha=0$, $\beta=1$, $\epsilon=0.1$, $\gamma_{in}^{(0)}=1$,  $\beta_{in}=0.5$, $\sigma=0.05$, $T=10$.
			\\
			\hline
			\centering DD-A
			& Ex.~4: $\tau=8$, $\gamma^{(0)}=0.01$, $\alpha=3$, $\beta=1$, $\epsilon=0.9$; 
			Ex.~5: $\tau=10$, $\gamma^{(0)}=0.01$, $\alpha=3$, $\beta=1$, $\epsilon=0.9$; 
			Ex.~66: $\tau=10$, $\gamma^{(0)}=1$, $\alpha=3$, $\beta=1$, $\epsilon=0.9$.
			\\
			\hline
			\centering SDD-A
			& Ex.~5: $\tau=10^{-5}$, $L=0.1$, $\gamma^{(0)}=1$, $\alpha=1$, $\beta=1$, $\epsilon=0.1$, $\gamma_{in}^{(0)}=1$,  $\beta_{in}=0.5$, $\sigma=0.05$, $T=10$;
			Ex.~6: $\tau=10^{-1}$, $L=0.1$, $\gamma^{(0)}=1$, $\alpha=1$, $\beta=1$, $\epsilon=0.1$, $\gamma_{in}^{(0)}=0.001$,  $\beta_{in}=0.9$, $\sigma=0.05$, $T=10$.
			\\
			\hline 
			\centering SCAPD 
			& Ex.~2: $\tau_{\mathbf{x}}=10^8$, $\tau_{y}=0$, $\gamma^{(0)}=1$, $\alpha=0$, $\beta=1$, $\epsilon=0.1$, $\gamma_{in}^{(0)}=1$,  $\beta_{in}=[0.1,0.2,\cdots, 0.9]$, $\sigma=0.05$, $T=10$; 
			Ex.~3: $\tau_{\mathbf{x}}=10^8$, $\tau_{y=0}$, $\gamma^{(0)}=1$, $\alpha=0$, $\beta=1$, $\epsilon=0.1$, $\gamma_{in}^{(0)}=1$,  $\beta_{in}=[0.1,0.2,\cdots, 0.9]$, $\sigma=0.05$, $T=10$
			\\
			\hline
			\centering SQP  
			& Ex.~1: $\rho=[1,10,10^2]$, $\beta=[0.06,0.08,0.1]$, $\alpha=0.1$;
			Ex.~2: $\rho=[1,10,10^2]$, $\beta=[0.06,0.08,0.1]$, $\alpha=0.1$;
			Ex.~3: $\rho=[1,10,10^2]$, $\beta=[0.06,0.08,0.1]$, $\alpha=0.1$
			Ex.~4: $\rho=[10^{-3},10^{-2},10^{-1}]$, $\beta=[0,1,0.2,0.3]$, $\alpha=0.1$
			Ex.~5: $\rho=[10^{-3},10^{-2},10^{-1}]$, $\beta=[0,1,0.2,0.3]$, $\alpha=0.1$
			Ex.~6: $\rho=[10^{-3},10^{-2},10^{-1}]$, $\beta=[0,1,0.2,0.3]$, $\alpha=0.1$
			\\
			\hline
			\centering AL 
			& Ex.~1: $\rho=[10^{10},10^{12},10^{14}]$, $\beta=[2,4,6]$, $\alpha=0.25$
			Ex.~2: $\rho=[10^{10},10^{12},10^{14}]$, $\beta=[2,4,6]$, $\alpha=0.25$
			Ex.~3: $\rho=[10^{10},10^{12},10^{14}]$, $\beta=[2,4,6]$, $\alpha=0.25$
			Ex.~4: $\rho=[10^{-1},10^{0},10^{-1}]$, $\beta=[2,3,4]$,  $\alpha=0.25$
			Ex.~5: $\rho=[10^{-4},10^{-3},10^{-2}]$, $\beta=[2,3,4]$,  $\alpha=0.25$
			Ex.~6: $\rho=[10^{-4},10^{-3},10^{-2}]$, $\beta=[2,3,4]$,  $\alpha=0.25$
			\\
			\hline
		\end{tabularx}
	\label{table para}
\end{table*}

For PD-A, SPD-A, DD-A, and SDD-A, we choose stepsizes $\gamma^{(k)}=\frac{1}{\alpha+\beta k^{\epsilon}},k\geq 1$.
For PD-A on Example~\ref{exm 3} and DD-A on Example~\ref{exm 6}, we utilize MATLAB’s built-in fmincon function to numerically solve the subproblems.
For SPD-A and SDD-A's inner algorithms, (i.e., Algorithms~\ref{sp alg:alg1-PD} and~\ref{sd alg:alg1-DD}), we choose stepsizes $\gamma_{in}^{(t)}=\gamma_{in}^{(t-1)}(1-\beta_{in} \gamma_{in}^{(t-1)}), t\geq 1$ and the termination criteria $|F(\mathbf{x}^{(t)},y^{(t)};\mathbf{x}^{(k)},y^{(k)})-F(\mathbf{x}^{(t-1)},y^{(t-1)};\mathbf{x}^{(k)},y^{(k)})|\leq \sigma|F(\mathbf{x}^{(t-1)},y^{(t-1)};\mathbf{x}^{(k)},y^{(k)})|$ with $t\leq T$ and $|q(\tilde{\mu}^{(t)};\mathbf{x}^{(k)})-q(\tilde{\mu}^{(t-1)};\mathbf{x}^{(k)})|\leq \sigma|q(\tilde{\mu}^{(t-1)};\mathbf{x}^{(k)})|$ with $t\leq T$, respectively.
For SCAPD, we choose the same approximate functions as in SPD-A, introduce coupling slack variables $\mathbf{Z}\triangleq(z_{i,j})_{i\in\mathcal{I},j=1,2}\in\mathbb{R}^{I\times 2}$ and additional constraints $\sum_{j=1,2}z_{i,j}\leq 0$, $i\in\mathcal{I}$ for the convex approximate problem per iteration, and update the coupling variables $\mathbf{Z}$ by utilizing MATLAB's built-in quadprog function to numerically solve Quadratic Program (QP). 
For SQP, we utilize MATLAB’s built-in quadprog function to solve approximate QPs, backtracking line search (with the penalty parameter $\rho$, the shrinkage factor $\beta$, and the sufficient decrease parameter $\sigma$) to determine the stepsize, and the Broyden-Fletcher-Goldfarb-Shanno (BFGS) method to update the approximate Hessian matrix.  
Note that for all implementations involving MATLAB’s built-in quadprog function, we employ the interior-point-convex algorithm with both the optimality tolerance and step tolerance set to $10^{-5}$.
For AL, we utilize MATLAB’s built-in fmincon function to solve AL problems constructed by eliminating only the coupling constraints by a penalty (with the penalty parameter $\rho$, increase factor $\beta$, and the sufficient decrease parameter $\sigma$). 
The parameters for all algorithms are listed in Table~\ref{table para}.
Notably, for all examples, we choose only one set of parameters for the proposed algorithms and $9$ different parameter configurations for the baselines.

For each example, we randomly and independently generate $10$ initial points for all applicable algorithms with details given below. 
For Examples~\ref{exm 1}-\ref{exm 3}, we randomly generate $10$ initial points of $y$ and $x_{i,1}$, $i\in\mathcal{I}$ according to i.i.d. $\mathcal{U}(0,1)$ and $\mathcal{U}(-1,1)$.
For Example~\ref{exm 1} and Examples~\ref{exm 2}-\ref{exm 3}, we randomly generate $10$ initial points of Lagrange multipliers $\tilde{\lambda}_{i}$ and $\tilde{\mu}_{i}$, $i\in\mathcal{I}$ associated with the coupling constraints according to i.i.d. $\mathcal{U}(-1,1)$ and $\mathcal{U}(0,1)$, respectively.
For Examples~\ref{exm 4}-\ref{exm 6}, we randomly generate $10$ initial points of $x_{i}$, $i\in\mathcal{I}$ according to i.i.d. $\mathcal{U}(-0.05,0.05)$.
For Example~\ref{exm 4} and Examples~\ref{exm 5}-\ref{exm 6}, we randomly generate $10$ initial points of the Lagrange multipliers $\tilde{\lambda}$ and $\tilde{\mu}$ associated with the coupling constraints according to i.i.d. $\mathcal{U}(-1,1)$ and $\mathcal{U}(0,1)$, respectively. 
Notably, the initial points of $(\mathbf{x},y)$ in Examples~\ref{exm 1}-\ref{exm 3} are feasible,
whereas the initial points of $\mathbf{x}$ in Examples~\ref{exm 4}-\ref{exm 6} are not guaranteed to satisfy the coupling constraints.

For Examples~\ref{exm 1}-\ref{exm 3}, the convergence criterion is given by: 
\begin{align*}
	&\frac{1}{I}\sum_{i\in \mathcal{I}}\max\{0,g_{i}(\mathbf{x}_{i}^{(k)},y^{(k)})\}< 10^{-6}, k=9,10,\\
	&\max_{i\in \mathcal{I}}\max\{0, g_{i}(\mathbf{x}_{i}^{(k)},y^{(k)})\}< 10^{-5}, k=9,10,\\
	&|f(\mathbf{x}^{(10)},y^{(10)})-f(\mathbf{x}^{(9)},y^{(9)})|\leq 0.05 |f(\mathbf{x}^{(9)},y^{(9)})|,
\end{align*}
and the best objective values are chosen among all $10$ iterates that satisfy:
\begin{align*}
	&\frac{1}{I}\sum_{i\in \mathcal{I}}\max\{0,g_{i}(\mathbf{x}_{i}^{(k)},y^{(k)})\}< 10^{-6},\\
	&\max_{i\in \mathcal{I}}|h_{i}(\mathbf{x}_{i}^{(k)},y^{(k)})|< 10^{-5}.
\end{align*}
For Examples~\ref{exm 4}-\ref{exm 6}, the convergence criterion is given by:
\begin{align*}
	&\max\{0,g(\mathbf{x}^{(k)})\}< 10^{-2}, k=9,10,\\
	&|f(\mathbf{x}^{(10)})-f(\mathbf{x}^{(9)})|\leq 0.05 |f(\mathbf{x}^{(9)})|,
\end{align*}
and the best objective values are chosen among all $10$ iterates that satisfy:
\begin{align*}
	\max\{0,g(\mathbf{x}^{(k)})\}< 10^{-2}.
\end{align*}

\begin{table}[t]
	\centering
	\caption{Proportion of convergent trials within $10$ iterations.}
	\resizebox{0.485\textwidth}{!}{
    \begin{tabular}{|c|c|c|c|c|c|c|c|} 
		\hline
		\multirow{2}*{}&\multirow{2}*{Algorithm} &  \multicolumn{6}{c|}{Proportion of convergent trials (\%)} 
		\\
		\cline{3-8}& & Ex.~\ref{exm 1} & Ex.~\ref{exm 2} & Ex.~\ref{exm 3} & Ex.~\ref{exm 4} & Ex.~\ref{exm 5} & Ex.~\ref{exm 6}
		\\
		\hline
		\multirow{4}*{Proposed} & PD-A
		& 100 & 100 & 100 & N/A & N/A & N/A
		\\
		\cline{2-8} &
		SPD-A
		& N/A&100& 100 & N/A& N/A& N/A
		\\
		\cline{2-8} &
		DD-A
		& N/A& N/A& N/A& 100& 100& 100
		\\
		\cline{2-8} &
		SDD-A
		& N/A& N/A& N/A& N/A& 100& 100
		\\
		\hline \multirow{3}*{Baseline}&
		SCAPD \cite{scutari2017parallel}
		& N/A& 75& 84& N/A& N/A& N/A
		\\
		\cline{2-8} &
		SQP \cite{gill2011sequential} & 53& 79& 81& 90& 100& 100
		\\
		\cline{2-8} &
		AL \cite{fernandez2012local}& 100& 97& 97& 55& 100& 12
		\\
		\hline
	\end{tabular}}
	\label{table converge}
\end{table}

\begin{figure*}[t]
	\centering
	\subfloat[Example~\ref{exm 1}.]{\includegraphics[width=0.25\textwidth]{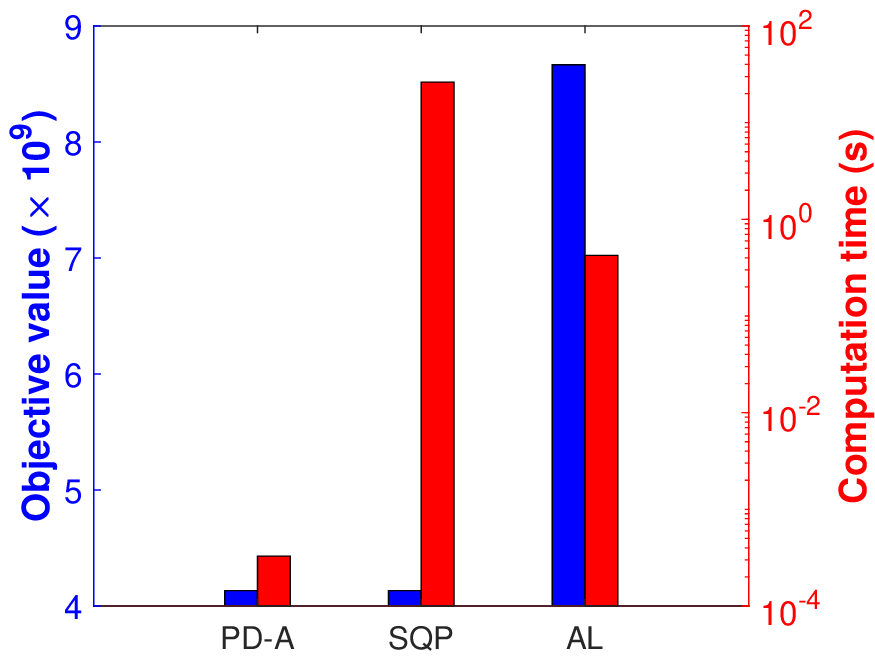}\label{fig exm 1}}
	\hspace{1pt}
	\subfloat[Example~\ref{exm 2}.]{\includegraphics[width=0.25\textwidth]{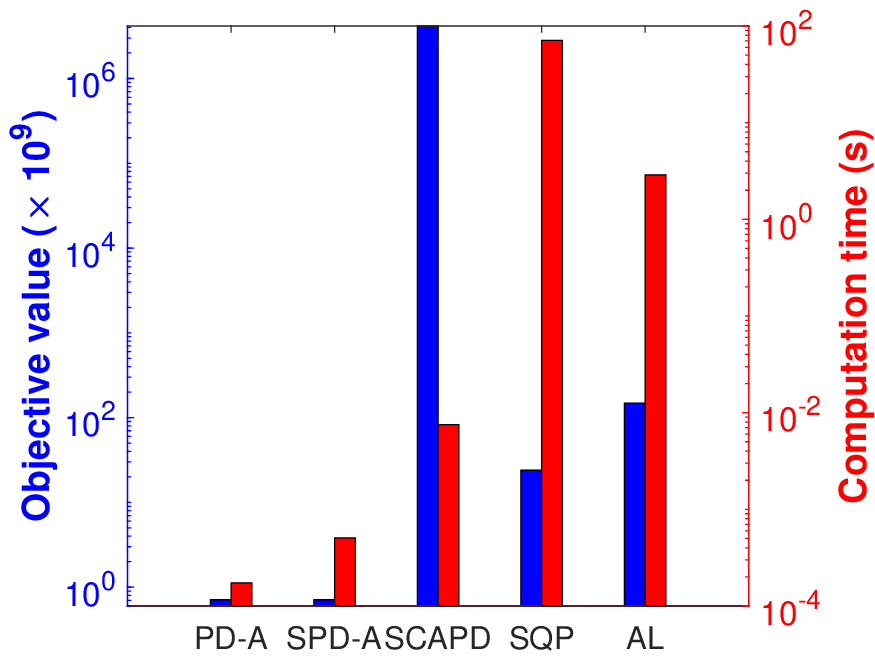}\label{fig exm 2}}
	\hspace{1pt}
	\subfloat[Example~\ref{exm 3}.]{\includegraphics[width=0.25\textwidth]{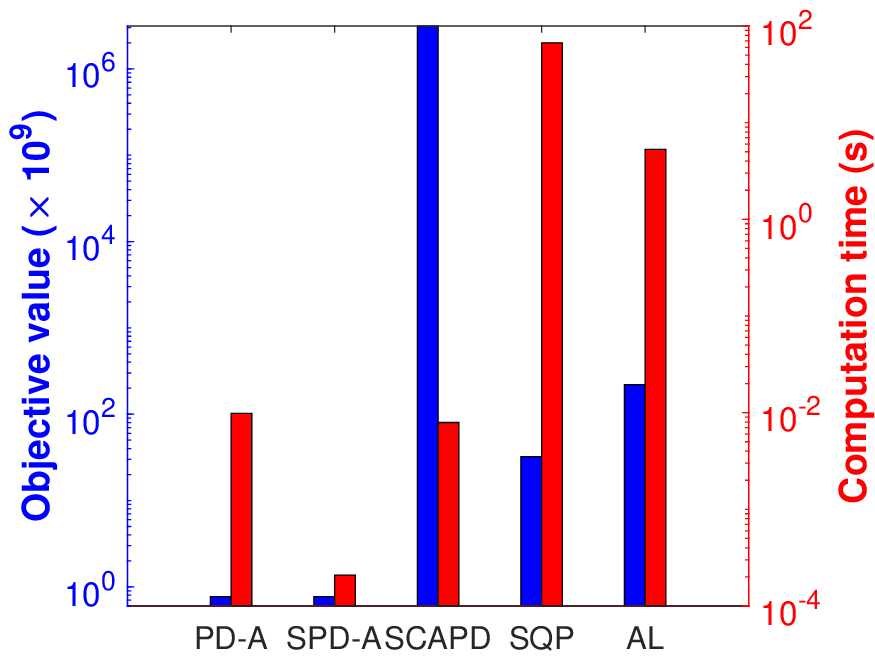}\label{fig exm 3}}
	\caption{Best objective values and minimum computation times for Examples~\ref{exm 1}-\ref{exm 3} with coupling variables.}
	\label{fig primal}
\end{figure*}

\begin{figure*}[t]
	\centering
	\subfloat[Example~\ref{exm 4}.]{\includegraphics[width=0.25\textwidth]{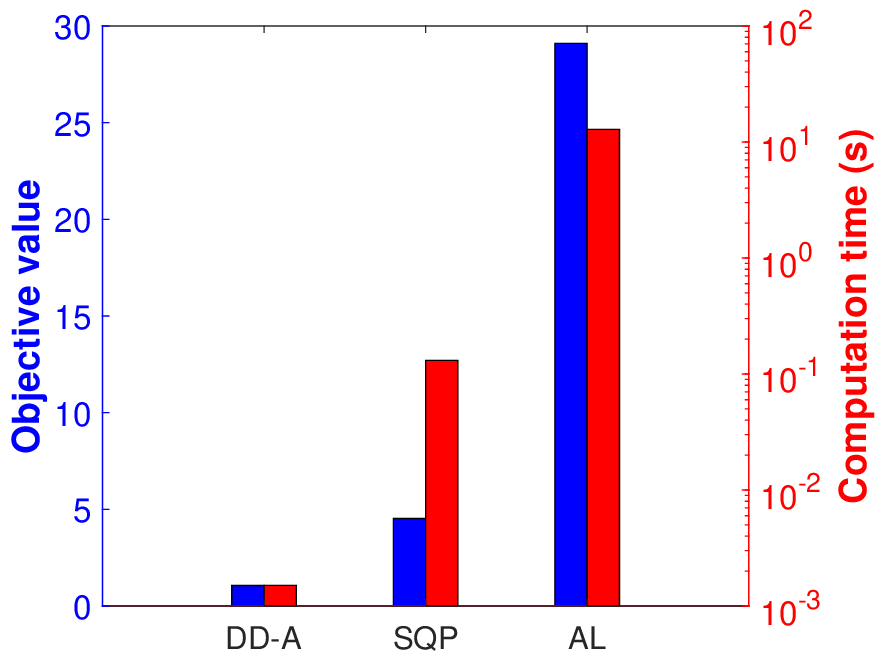}\label{fig exm 4}}
	\hspace{1pt}
	\subfloat[Example~\ref{exm 5}.]{\includegraphics[width=0.25\textwidth]{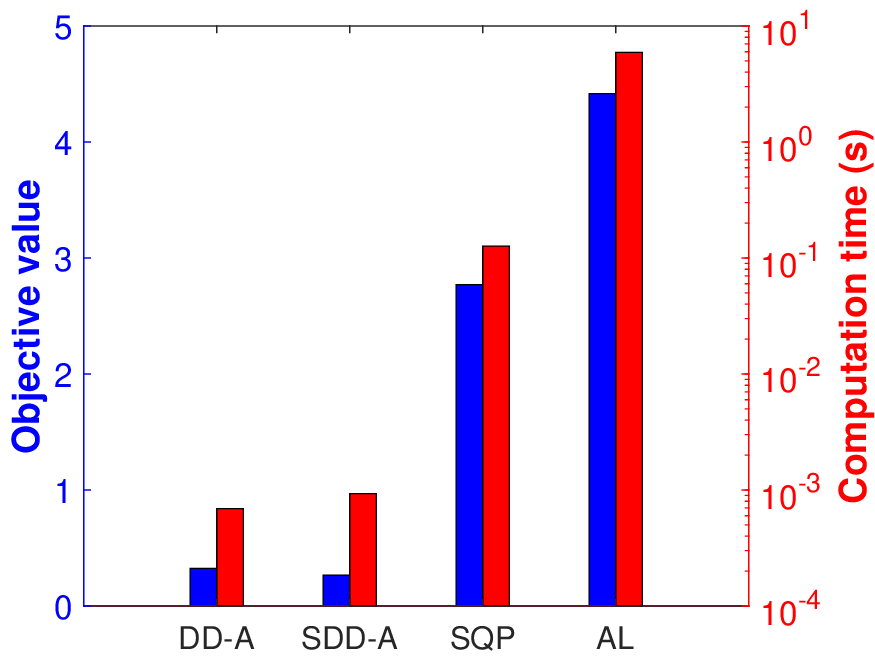}\label{fig exm 5}}
	\hspace{1pt}
	\subfloat[Example~\ref{exm 6}.]{\includegraphics[width=0.25\textwidth]{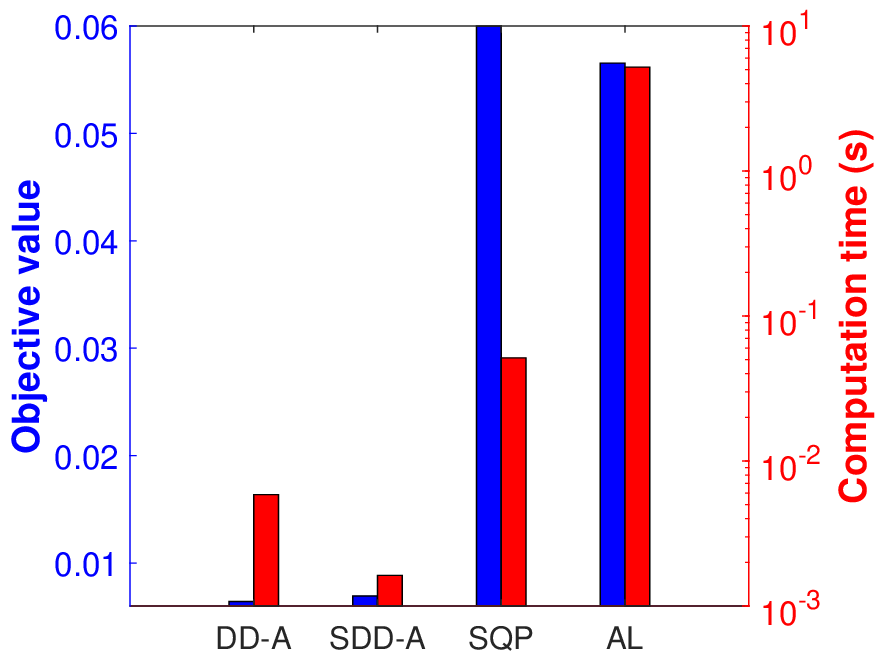}\label{fig exm 6}}
	\caption{Best objective values and minimum computation times for Examples~\ref{exm 4}-\ref{exm 6} with coupling constraints.}
	\label{fig dual}
\end{figure*}

Table~\ref{table converge} shows the proportions of convergent trials within $10$ iterations out of $100$ trials  ($10$ samples of problem parameters and $10$ initial points).
Note that the results of the baselines in Table~\ref{table converge} represent the best outcomes obtained from $9$ different parameter settings.
Fig.~\ref{fig primal} and Fig.~\ref{fig dual} illustrate the best objective values and their corresponding minimum computation times. 
For each example, the results are obtained by averaging w.r.t. the intersection of all applicable algorithms' convergent trials. 
From Table~\ref{table converge}, Fig.~\ref{fig primal}, and Fig.~\ref{fig dual}, we observe that the proposed algorithms outperform the corresponding baselines in these examples in terms of convergence performance and computation time.  
Moreover, PD-A and SPD-A (DD-A and SDD-A) exceed each other in different examples, together increasing the chance of achieving  better performance.

\section{Conclusion and Future Work}\label{sec conclusion}
This paper establishes a comprehensive decomposition framework for nonconvex problems with decomposition structures including four methods and algorithms, i.e., \mbox{PD-M/A}, \mbox{SPD-M/A}, \mbox{DD-M/A}, and \mbox{SDD-M/A}.
The proposed methods and algorithms can handle a larger set of nonconvex problems with decomposition structures than the state-of-the-art ones, exploit decomposition structures, allow for parallel and distributed implementations, produce stationary points, and offer good opportunities to achieve superior tradeoff between convergence performance and computation time. 
The proposed methods and algorithms successfully extend the classic ones for convex problems and the recent ones for nonconvex problems, enriching the decomposition theory and possibly offering approaches for large-scale  problems in practice. This paper opens up future research directions on applying the proposed decomposition framework to solve real-world problems with decomposition structures, e.g., in signal processing and wireless communications.

\appendix

\subsection{Proof of Lemma~\ref{primal lem1}}\label{appendix primal lem1}
For all $i\in\mathcal{I}$, the expression of the KKT function $\mathbf{k}_{i}:\mathcal{U}_i\times\mathbb{R}^{\tilde{r}_i}\times\mathbb{R}^{\tilde{m}_i}\times\mathbb{R}^{r_i}\times\mathcal{V}\rightarrow \mathbb{R}^{n_i+\tilde{r}_i+\tilde{m}_i+r_i}$ of the subproblem $\mathcal{P}^{pd}_{sub,i}$ is given by:
\begin{equation}\label{app lem1-kkt-function}
	\begin{aligned}
			&\mathbf{k}_i(\mathbf{x}_i,\tilde{\boldsymbol{\mu}}_i, \tilde{\boldsymbol{\lambda}}_i, \boldsymbol{\mu}_i,  \mathbf{y})\triangleq\begin{pmatrix}
					\boldsymbol{\zeta}_{i}(\mathbf{x}_i,\tilde{\boldsymbol{\mu}}_i,  \tilde{\boldsymbol{\lambda}}_i, \boldsymbol{\mu}_i, \mathbf{y})
					\\
					\boldsymbol{\eta}_{i,1}(\mathbf{x}_i,\tilde{\boldsymbol{\mu}}_i, \mathbf{y})
					\\
					\tilde{\mathbf{h}}_i(\mathbf{x}_i, \mathbf{y})
					\\
					\boldsymbol{\eta}_{i,2}(\mathbf{x}_i, \mathbf{y}),
				
					\end{pmatrix},
		\end{aligned}
\end{equation}	
where
\begin{align*}
		&\boldsymbol{\zeta}_{i}(\mathbf{x}_i,\tilde{\boldsymbol{\mu}}_i, \boldsymbol{\mu}_i, \tilde{\boldsymbol{\lambda}}_i, \mathbf{y})\triangleq
		\nabla_{\mathbf{x}_i} f_i(\mathbf{x}_i, \mathbf{y})\\
		&
		+\nabla_{\mathbf{x}_i} \tilde{\mathbf{g}}_i(\mathbf{x}_i, \mathbf{y})\tilde{\boldsymbol{\mu}}_i 
		+\nabla_{\mathbf{x}_i} \tilde{\mathbf{h}}_i(\mathbf{x}_i, \mathbf{y})\tilde{\boldsymbol{\mu}}_i
		+\nabla \mathbf{g}_i(\mathbf{x}_i)\boldsymbol{\mu}_i,\\
		&\langle\boldsymbol{\eta}_{i,1}(\mathbf{x}_i,\tilde{\boldsymbol{\mu}}_i, \mathbf{y})\rangle_j\triangleq\begin{cases}
				\langle\tilde{\mathbf{g}}_i(\mathbf{x}_i, \mathbf{y})\rangle_{j} & j\in \tilde{\mathcal{A}}(\mathbf{x}_i, \mathbf{y})\\
				\langle\tilde{\boldsymbol{\mu}}_i\rangle_{j} & j\notin \tilde{\mathcal{A}}(\mathbf{x}_i, \mathbf{y})
			\end{cases},&\\
		&\langle\boldsymbol{\eta}_{i,2}(\mathbf{x}_i,\boldsymbol{\mu}_i)\rangle_{j}\triangleq\begin{cases}
				\langle\mathbf{g}_i(\mathbf{x}_i)\rangle_{j} & j\in\mathcal{A}(\mathbf{x}_i)\\
				\langle\boldsymbol{\mu}_i\rangle_{j} & j\notin \mathcal{A}(\mathbf{x}_i)
			\end{cases},&
	\end{align*}
with $\tilde{\mathcal{A}}(\mathbf{x}_i, \mathbf{y})\triangleq\{j\in\{1, \cdots, \tilde{r}_i\}| \ \langle\tilde{\mathbf{g}}_i(\mathbf{x}_i, \mathbf{y})\rangle_j=0\}$ and $\mathcal{A}(\mathbf{x}_i)\triangleq\{j\in\{1, \cdots, r_i\}| \ \langle\mathbf{g}_i(\mathbf{x}_i)\rangle_j=0\}$.

\begin{figure*}[ht]
	\hrule
	\begin{gather}
			\nabla_{\mathbf{x}_i} f_i(\mathbf{x}^{\ddagger}_i, \mathbf{y}^{\ddagger})
			+\nabla_{\mathbf{x}_i} \tilde{\mathbf{g}}_i(\mathbf{x}^{\ddagger}_i, \mathbf{y}^{\ddagger} ) \tilde{\boldsymbol{\mu}}_{i}^{\ddagger} 
			+\nabla_{\mathbf{x}_i} \tilde{\mathbf{h}}_i(\mathbf{x}^{\ddagger}_i, \mathbf{y}^{\ddagger} ) \tilde{\boldsymbol{\lambda}}_{i}^{\ddagger} 
			+\nabla \mathbf{g}_{i}(\mathbf{x}^{\ddagger}_i) \boldsymbol{\mu}_{i}^{\ddagger}
			=\mathbf{0}, 
			\  i\in\mathcal{I},\label{app lem1 kkt1}
			\\
			\tilde{\boldsymbol{\mu}}_{i}^{\ddagger}\odot\tilde{\mathbf{g}}_{i}(\mathbf{x}^{\ddagger}_i, \mathbf{y}^{\ddagger})=\mathbf{0},\
			\boldsymbol{\mu}_{i}^{\ddagger}\odot\mathbf{g}_{i}(\mathbf{x}^{\ddagger}_i) =\mathbf{0},  \
			\tilde{\boldsymbol{\mu}}_{i}\succeq\mathbf{0}, \
			\boldsymbol{\mu}_{i}\succeq\mathbf{0}, \
			i\in\mathcal{I}, \label{app lem1 kkt2}\\
			\tilde{\mathbf{g}}_i(\mathbf{x}^{\ddagger}_i, \mathbf{y}^{\ddagger})\preceq \mathbf{0},  \
			\tilde{\mathbf{h}}_i(\mathbf{x}^{\ddagger}_i, \mathbf{y}^{\ddagger})= \mathbf{0},  \
			\mathbf{g}_{i}(\mathbf{x}^{\ddagger}_i)\preceq \mathbf{0},  \
			\mathbf{x}^{\ddagger}_i\in \mathcal{X}_i,\  
			i\in\mathcal{I}.
			\label{app lem1 kkt3}
	\end{gather}
	\hrule 
\end{figure*}

\subsubsection{Proof of Lemma~\ref{primal lem1}.1}
First, we prove that Implicit Function Theorem hold for KKT function $\mathbf{k}_{i}$, for all $i\in\mathcal{I}$.
Since $\mathbf{x}^{\ddagger}\in \prod_{i\in\mathcal{I}}\operatorname{int}(\mathcal{X}_{i})$, the stationary point $(\mathbf{x}^{\ddagger},\mathbf{y}^{\ddagger})$ together with its Lagrange multipliers $\tilde{\boldsymbol{\mu}}^{\ddagger}_i, \tilde{\boldsymbol{\lambda}}^{\ddagger}_i, \boldsymbol{\mu}^{\ddagger}_i,$, $i\in\mathcal{I}$ satisfies 
(\ref{app lem1 kkt1}), (\ref{app lem1 kkt2}), and (\ref{app lem1 kkt3}), as shown at the top of this page.
This implies that $\mathbf{k}_i(\mathbf{x}_i^{\ddagger},  \tilde{\boldsymbol{\mu}}^{\ddagger}_i,    \tilde{\boldsymbol{\lambda}}^{\ddagger}_i, \boldsymbol{\mu}^{\ddagger}_i, \mathbf{y}^{\ddagger})=\mathbf{0}$, for all $i\in\mathcal{I}$.
Besides, by Assumptions~\ref{primal assump-p} and \ref{primal assump-stationary-2}.1, there exists a neighborhood $\mathcal{N}'_{\mathbf{y}^{\ddagger}}\subseteq\operatorname{dom}f^{\dagger}$ of $\mathbf{y}^{\ddagger}$ and  neighborhoods $\mathcal{N}'_{\mathbf{x}_{i}^{\ddagger}}\subseteq\operatorname{int}(\mathcal{X}_{i})$, $i\in\mathcal{I}$ of $\mathbf{x}_{i}^{\ddagger}$, $i\in\mathcal{I}$ such that for all $i\in\mathcal{I}$, the constraints in (\ref{primal sub-coup-ineq-cons}) and (\ref{primal sub-decoup-ineq-cons-x}) are satisfied for all $\mathbf{x}_{i}\in\mathcal{N}'_{\mathbf{x}_{i}^{\ddagger}}$ and $\mathbf{y}\in\mathcal{N}'_{\mathbf{y}^{\ddagger}}$, and the sets $\tilde{\mathcal{A}}(\mathbf{x}_{i}, \mathbf{y})$ and $\mathcal{A}(\mathbf{x}_{i})$ do not change over $\mathcal{N}'_{\mathbf{x}_{i}^{\ddagger}}\times\mathcal{N}'_{\mathbf{y}^{\ddagger}}$ and $\mathcal{N}'_{\mathbf{x}_{i}^{\ddagger}}$, respectively.
This implies that for all $i\in\mathcal{I}$, $\mathbf{k}_{i}$ is continuously differentiable over $\mathcal{N}'_{\mathbf{x}_{i}^{\ddagger}}\times\mathbb{R}^{\tilde{r}_i}\times\mathbb{R}^{\tilde{m}_i}\times\mathbb{R}^{r_i}\times\mathcal{N}'_{\mathbf{y}^{\ddagger}}$. 
Based on the above results and Assumption~\ref{primal assump-stationary-2}.2, we can conclude that Implicit Function Theorem hold for KKT function $\mathbf{k}_{i}$, for all $i\in\mathcal{I}$.

Then, it is readily followed from Implicit Function Theorem that there exists a neighborhood $\mathcal{N}_{\mathbf{y}^{\ddagger}}\subseteq\mathcal{N}'_{\mathbf{y}^{\ddagger}}$ of $\mathbf{y}^{\ddagger}$,  neighborhoods $\mathcal{N}_{\mathbf{x}_{i}^{\ddagger}}\subseteq\mathcal{N}'_{\mathbf{x}_{i}^{\ddagger}}$, $\mathcal{N}_{\tilde{\boldsymbol{\mu}}_{i}^{\ddagger}}\subseteq\mathbb{R}^{\tilde{r}_{i}}_{+}$, 
$\mathcal{N}_{\tilde{\boldsymbol{\lambda}}_{i}^{\ddagger}}\subseteq\mathbb{R}^{\tilde{m}_{i}}$, 
$\mathcal{N}_{\boldsymbol{\mu}_{i}^{\ddagger}}\subseteq\mathbb{R}^{r_{i}}_{+}$, $i\in\mathcal{I}$ of $\mathbf{x}_{i}^{\ddagger}$, $\tilde{\boldsymbol{\mu}}_{i}^{\ddagger}$,    $\tilde{\boldsymbol{\lambda}}_{i}^{\ddagger}$, $\boldsymbol{\mu}_{i}^{\ddagger}$, $i\in\mathcal{I}$, respectively, and single-valued continuously differentiable functions $\mathbf{X}^{\dag}_i:\mathcal{N}_{\mathbf{y}^{\ddagger}}\rightarrow \mathcal{N}_{\mathbf{x}_{i}^{\ddagger}}$, $\widetilde{\mathbf{M}}_i:\mathcal{N}_{\mathbf{y}^{\ddagger}}\rightarrow \mathcal{N}_{\tilde{\boldsymbol{\mu}}_{i}^{\ddagger}}$,
$\tilde{\boldsymbol{\Lambda}}_i:\mathcal{N}_{\mathbf{y}^{\ddagger}}\rightarrow \mathcal{N}_{\tilde{\boldsymbol{\lambda}}_{i}^{\ddagger}}$, $\mathbf{M}_i:\mathcal{N}_{\mathbf{y}^{\ddagger}}\rightarrow \mathcal{N}_{\boldsymbol{\mu}_{i}^{\ddagger}}$, $i\in\mathcal{I}$ such that for all $i\in\mathcal{I}$ and $\mathbf{y}\in\mathcal{N}_{\mathbf{y}^{\ddagger}}$, 
$\mathbf{k}_i(\mathbf{X}^{\dag}_i(\mathbf{y}),  \widetilde{\mathbf{M}}_i(\mathbf{y}),    \tilde{\boldsymbol{\Lambda}}_i(\mathbf{y}), \mathbf{M}_i(\mathbf{y}), \mathbf{y})=\mathbf{0}$.
This implies that for all $i\in\mathcal{I}$ and  $\mathbf{y}\in\mathcal{N}_{\mathbf{y}^{\ddagger}}$, $\mathbf{X}^{\dag}_i(\mathbf{y})$ together with $\widetilde{\mathbf{M}}_i(\mathbf{y})$,    $\tilde{\boldsymbol{\Lambda}}_i(\mathbf{y})$, and $\mathbf{M}_i(\mathbf{y})$ satisfies the KKT conditions for the subproblem $\mathcal{P}_{sub, i}^{pd}$ except for (\ref{primal sub-coup-ineq-cons}) and (\ref{primal sub-decoup-ineq-cons-x}).
Furthermore, since $\mathcal{N}_{\mathbf{y}^{\ddagger}}\subseteq\mathcal{N}'_{\mathbf{y}^{\ddagger}}$, $\mathcal{N}_{\mathbf{x}_{i}^{\ddagger}}\subseteq\mathcal{N}'_{\mathbf{x}_{i}^{\ddagger}}$, $i\in\mathcal{I}$, we can get 
that for all $i\in\mathcal{I}$ and $\mathbf{y}\in\mathcal{N}_{\mathbf{y}^{\ddagger}}$, $\mathbf{X}_{i}(\mathbf{y})\in\mathcal{N}'_{\mathbf{x}_{i}^{\ddagger}}\subseteq\operatorname{int}(\mathcal{X}_{i})$ satisfies the constraints in (\ref{primal sub-coup-ineq-cons}) and (\ref{primal sub-decoup-ineq-cons-x}).
Thus, for all $i\in\mathcal{I}$ and  $\mathbf{y}\in\mathcal{N}_{\mathbf{y}^{\ddagger}}$, $\mathbf{X}^{\dag}_i(\mathbf{y})\in\operatorname{int}(\mathcal{X}_{i})$ together with $\widetilde{\mathbf{M}}_i(\mathbf{y})$,    $\tilde{\boldsymbol{\Lambda}}_i(\mathbf{y})$, and $\mathbf{M}_i(\mathbf{y})$ satisfies the KKT conditions for the subproblem $\mathcal{P}_{sub, i}^{pd}$.
Therefore, we can show Lemma~\ref{primal lem1}.1.

\subsubsection{Proof of Lemma~\ref{primal lem1}.2}
Notice that based on the proof of Lemma~\ref{primal lem1}.1, we can also get that for all $i\in\mathcal{I}$ and $\mathbf{y}\in\mathcal{N}_{\mathbf{y}^{\ddagger}}$, 
the point $(\mathbf{X}^{\dag}_i(\mathbf{y}),  \widetilde{\mathbf{M}}_i(\mathbf{y}),    \tilde{\boldsymbol{\Lambda}}_i(\mathbf{y}),\mathbf{M}_i(\mathbf{y}))$ is the unique point in $\mathcal{N}_{\mathbf{x}_{i}^{\ddagger}}\times\mathcal{N}_{\tilde{\boldsymbol{\mu}}_{i}^{\ddagger}}\times\mathcal{N}_{\tilde{\boldsymbol{\lambda}}_{i}^{\ddagger}}\times\mathcal{N}_{\boldsymbol{\mu}_{i}^{\ddagger}}$ that together with $\mathbf{y}$ satisfies $\mathbf{k}_i(\mathbf{x}_i, \tilde{\boldsymbol{\mu}}_i,    \tilde{\boldsymbol{\lambda}}_i, \boldsymbol{\mu}_i, \mathbf{y})=\mathbf{0}$.
Based on this, we prove Lemma~\ref{primal lem1}.2 in the following.
First, we prove the first statement of Lemma~\ref{primal lem1}.2.
For all $i\in\mathcal{I}$, suppose now by contradiction that the subproblem $\mathcal{P}^{pd}_{sub,i}$ for a fixed $\mathbf{y}\in \mathcal{N}_{\mathbf{y}^{\ddagger}}$ has two different stationary points in $\mathcal{N}_{\mathbf{x}_{i}^{\ddagger}}$ that have Lagrange multipliers associated with the constraints in (\ref{primal sub-coup-ineq-cons}), (\ref{primal sub-coup-eq-cons}), and (\ref{primal sub-decoup-ineq-cons-x}) in $\mathcal{N}_{\tilde{\boldsymbol{\mu}}_{i}^{\ddagger}}$,   $\mathcal{N}_{\tilde{\boldsymbol{\lambda}}_{i}^{\ddagger}}$, and $\mathcal{N}_{\boldsymbol{\mu}_{i}^{\ddagger}}$, respectively.
Denote these two different stationary points as $\mathbf{x}'_i$ and $\mathbf{x}''_i$, and denote their Lagrange multipliers as $\tilde{\boldsymbol{\mu}}'_i$,  $\tilde{\boldsymbol{\lambda}}'_i$, $\boldsymbol{\mu}'_i$ and $\tilde{\boldsymbol{\mu}}''_i$,  $\tilde{\boldsymbol{\lambda}}''_i$, $\boldsymbol{\mu}''_i$, respectively.
Then, it follows from the KKT conditions for the subproblem $\mathcal{P}^{pd}_{sub,i}$ that $\mathbf{k}_i(\mathbf{x}'_i, \tilde{\boldsymbol{\mu}}'_i,    \tilde{\boldsymbol{\lambda}}'_i, \boldsymbol{\mu}'_i, \mathbf{y})=\mathbf{k}_i(\mathbf{x}''_i, \tilde{\boldsymbol{\mu}}''_i,    \tilde{\boldsymbol{\lambda}}''_i, \boldsymbol{\mu}''_i, \mathbf{y})=\mathbf{0}$, which is in contradiction with the uniqueness of point $(\mathbf{x}_i, \tilde{\boldsymbol{\mu}}_i,    \tilde{\boldsymbol{\lambda}}_i, \boldsymbol{\mu}_i)\in\mathcal{N}_{\mathbf{x}_{i}^{\ddagger}}\times\mathcal{N}_{\tilde{\boldsymbol{\mu}}_{i}^{\ddagger}}\times\mathcal{N}_{\tilde{\boldsymbol{\lambda}}_{i}^{\ddagger}}\times\mathcal{N}_{\boldsymbol{\mu}_{i}^{\ddagger}}$ that together with $\mathbf{y}$ satisfies $\mathbf{k}_i(\mathbf{x}_i, \tilde{\boldsymbol{\mu}}_i,    \tilde{\boldsymbol{\lambda}}_i, \boldsymbol{\mu}_i, \mathbf{y})=\mathbf{0}$. 
Thus, the first statement of Lemma~\ref{primal lem1}.2 holds.
Similarly, we can prove the second statement of Lemma~\ref{primal lem1}.2.
Therefore, we can show Lemma~\ref{primal lem1}.2.

\subsection{Proof of Theorem~\ref{primal thm1}}\label{appendix primal thm1}
Based on Lemma~\ref{primal lem1}, we only need to prove Theorem~\ref{primal thm1} under Assumptions~\ref{primal assump-p} and \ref{primal assump-stationary-1}.

\subsubsection{Proof of Theorem~\ref{primal thm1}.1}
First, we prove the first statement of Theorem~\ref{primal thm1}.1.
The composite function $f^{\dagger}_{\mathcal{I}}$ is continuously differentiable on $\mathcal{N}$, since functions $f_{i}$, $\mathbf{X}^{\dagger}_{i}$, $i\in\mathcal{I}$, which compose $f^{\dagger}_{\mathcal{I}}$, are continuously differentiable.

Next, we prove the second statement of Theorem~\ref{primal thm1}.1. 
Consider any $i\in\mathcal{I}$ and $\mathbf{y}\in\mathcal{N}$. 
Define the sets $\tilde{\mathcal{A}}(\mathbf{x}_i, \mathbf{y})$ and $\mathcal{A}(\mathbf{x}_i)$ as in the proof of Lemma~\ref{primal lem1} in Appendix~\ref{appendix primal lem1}.
From Assumption~\ref{primal assump-stationary-1}, we conclude that
the stationary point $\mathbf{X}^{\dagger}_{i}(\mathbf{y})\in\operatorname{int}(\mathcal{X}_{i})$ of the subproblem $\mathcal{P}_{sub, i}^{pd}$, together with $\widetilde{\mathbf{M}}_i(\mathbf{y})$, $\tilde{\boldsymbol{\Lambda}}_i(\mathbf{y})$, and $\mathbf{M}_i(\mathbf{y})$, satisfies the following KKT conditions: 
\begin{gather}
	\begin{split}
		&\nabla_{\mathbf{x}_i} f_i(\mathbf{X}^{\dagger}_i(\mathbf{y}), \mathbf{y})
		\!+\!\nabla_{\mathbf{x}_i} \tilde{\mathbf{g}}_i(\mathbf{X}^{\dagger}_i(\mathbf{y}), \mathbf{y}) \widetilde{\mathbf{M}}_i(\mathbf{y})\\
		&+\!\nabla_{\mathbf{x}_i} \tilde{\mathbf{h}}_i(\mathbf{X}^{\dagger}_i(\mathbf{y}), \mathbf{y}) \tilde{\boldsymbol{\Lambda}}_i(\mathbf{y})\!+\!\nabla_{\mathbf{x}_i} \mathbf{g}_{i}(\mathbf{X}^{\dagger}_i(\mathbf{y})) \mathbf{M}_i(\mathbf{y})\!=\!\mathbf{0},
	\end{split}\label{app thm1-1} \\
	\hspace{-0.2cm}\widetilde{\mathbf{M}}_i(\mathbf{y})\odot\tilde{\mathbf{g}}_i(\mathbf{X}^{\dagger}_i(\mathbf{y}), \mathbf{y})=\mathbf{0},
	\mathbf{M}_i(\mathbf{y})\odot\mathbf{g}_{i}(\mathbf{X}^{\dagger}_i(\mathbf{y})) =\mathbf{0},\label{app thm1-2}\\
	\widetilde{\mathbf{M}}_i(\mathbf{y})\succeq\mathbf{0}, \ \mathbf{M}_i(\mathbf{y})\succeq\mathbf{0}, \label{app thm1-3}\\ 
	\tilde{\mathbf{g}}_i(\mathbf{X}^{\dagger}_i(\mathbf{y}), \mathbf{y})\preceq \mathbf{0}, \
	\tilde{\mathbf{h}}_i(\mathbf{X}^{\dagger}_i(\mathbf{y}), \mathbf{y})= \mathbf{0},   \label{app thm1-4}\\
	 \mathbf{g}_{i}(\mathbf{X}^{\dagger}_i(\mathbf{y}))\preceq \mathbf{0}, \
	\mathbf{X}^{\dagger}_i(\mathbf{y})\in \mathcal{X}_i.
	\label{app thm1-5}
\end{gather}
For all $k\in\tilde{\mathcal{A}}(\mathbf{X}^{\dagger}_i(\mathbf{y}), \mathbf{y})^c$ and $l\in\mathcal{A}(\mathbf{X}^{\dagger}_i(\mathbf{y}))^c$,
the inequalities in (\ref{app thm1-4}) are strictly satisfied at $\mathbf{X}^{\dagger}_i(\mathbf{y})$ in $k$-th coordinate and $l$-th coordinate, respectively, i.e., $\langle\tilde{\mathbf{g}}_i(\mathbf{X}^{\dagger}_i(\mathbf{y}), \mathbf{y})\rangle_{k}< 0$ and $\langle\mathbf{g}_i(\mathbf{X}^{\dagger}_i(\mathbf{y}))\rangle_{l}< 0$.
Due to the continuity of $\tilde{\mathbf{g}}_i(\mathbf{X}^{\dagger}_i(\cdot), \cdot)$ and $\mathbf{g}_i(\mathbf{X}^{\dagger}_i(\cdot))$, the two inequalities hold true in a neighborhood $\mathcal{N}_{\mathbf{y}}\subseteq\mathcal{N}$ of $\mathbf{y}$.
That is, for all $\mathbf{y}'\in\mathcal{N}_{\mathbf{y}}$, we have $\langle\tilde{\mathbf{g}}_i(\mathbf{X}^{\dagger}_i(\mathbf{y}'), \mathbf{y}')\rangle_{k}< 0$ and $\langle\mathbf{g}_i(\mathbf{X}^{\dagger}_i(\mathbf{y}'))\rangle_{l}< 0$. 
Combining with the equalities in (\ref{app thm1-2}), we have that for all $\mathbf{y}'\in\mathcal{N}_{\mathbf{y}}$, $\langle\widetilde{\mathbf{M}}_i(\mathbf{y}')\rangle_{k}= 0$ and $\langle\mathbf{M}_i(\mathbf{y}')\rangle_{l}= 0$.
This leads to the fact that the $k$-th column of the gradient $\nabla\widetilde{\mathbf{M}}_i(\mathbf{y})$ is the zero vector, i.e., $\nabla\widetilde{\mathbf{M}}_i(\mathbf{y})[1:n_0, k]=\mathbf{0}$,
and the $l$-th column of the gradient $\nabla\mathbf{M}_i(\mathbf{y})$ is the zero vector, i.e., $\nabla\mathbf{M}_i(\mathbf{y})[1:n_0, l] =\mathbf{0}$.
For all $k\in\tilde{\mathcal{A}}(\mathbf{X}_i(\mathbf{y}), \mathbf{y})$ and $l\in\mathcal{A}(\mathbf{X}_i(\mathbf{y}))$, we have $\langle\tilde{\mathbf{g}}_i(\mathbf{X}^{\dagger}_i(\mathbf{y}), \mathbf{y})\rangle_{k}=0$ and $\langle\mathbf{g}_{i}(\mathbf{X}^{\dagger}_i(\mathbf{y}))\rangle_{l}=0$.
Therefore, combining these two cases, we have 
\begin{align}\label{primal thm1-8}
	&\begin{aligned}
		&\nabla\widetilde{\mathbf{M}}_i(\mathbf{y})\tilde{\mathbf{g}}_i(\mathbf{X}^{\dagger}_i(\mathbf{y}), \mathbf{y})\\
		=&\!\!\!\!\!\!\!\!\!\!\!\sum_{k\in\tilde{\mathcal{A}}(\mathbf{X}_i(\mathbf{y}), \mathbf{y})^c}\!\!\!\!\!\!\!\!\!\!\nabla\widetilde{\mathbf{M}}_i(\mathbf{y})[1:n_0, k]\langle\tilde{\mathbf{g}}_i(\mathbf{X}^{\dagger}_i(\mathbf{y}), \mathbf{y})\rangle_{k}
		\\
		&\!\!\!+\!\!\!\!\!\!\!\!\!\!\!\!\sum_{k\in\tilde{\mathcal{A}}(\mathbf{X}_i(\mathbf{y}), \mathbf{y})}\!\!\!\!\!\!\!\!\!\nabla\widetilde{\mathbf{M}}_i(\mathbf{y})[1:n_0, k]\langle\tilde{\mathbf{g}}_i(\mathbf{X}^{\dagger}_i(\mathbf{y}), \mathbf{y})\rangle_{k} 
		=\mathbf{0},
	\end{aligned}\\
	&\begin{aligned}\label{primal thm1-9}
		&\nabla\mathbf{M}_i(\mathbf{y})\mathbf{g}_{i}(\mathbf{X}^{\dagger}_i(\mathbf{y}))\\
		=&\!\!\!\!\!\!\!\!\sum_{l\in\mathcal{A}(\mathbf{X}_i(\mathbf{y}))^c}\!\!\!\!\!\!\!\!\nabla\mathbf{M}_i(\mathbf{y})[1:n_0; l]\langle\mathbf{g}_{i}(\mathbf{X}^{\dagger}_i(\mathbf{y}))\rangle_{l}
		\\
		&\!\!\!+\!\!\!\!\!\!\!\!\!\sum_{l\in\mathcal{A}(\mathbf{X}_i(\mathbf{y}))}\!\!\!\!\!\!\!\nabla\mathbf{M}_i(\mathbf{y})[1:n_0; l]\langle\mathbf{g}_{i}(\mathbf{X}^{\dagger}_i(\mathbf{y}))\rangle_{l} 
		=\mathbf{0}.
	\end{aligned}
\end{align}
Now, for all $i\in\mathcal{I}$, we rewrite $f^{\dagger}_{i}$ as follows:
\begin{equation}\label{primal thm1-10}
	\begin{aligned}
		f^{\dagger}_{i}(\mathbf{y})\!=\!&
		f_i(\mathbf{X}^{\dagger}_i(\mathbf{y}), \mathbf{y})+\!\widetilde{\mathbf{M}}_{i}(\mathbf{y})^{T}\tilde{\mathbf{g}}_i(\mathbf{X}^{\dagger}_i(\mathbf{y}), \mathbf{y})\\
		&\!+\!\tilde{\boldsymbol{\Lambda}}_{i}(\mathbf{y})^{T}\tilde{\mathbf{h}}_i(\mathbf{X}^{\dagger}_i(\mathbf{y}), \mathbf{y})\!+\!\mathbf{M}_{i}(\mathbf{y})^{T}\mathbf{g}_{i}(\mathbf{X}^{\dagger}_i(\mathbf{y})),
	\end{aligned}
\end{equation}
where the equality holds true since the additional terms are equal to zero according to  (\ref{app thm1-2}) and (\ref{app thm1-4}). 
Then, differentiating both sides of the equation in (\ref{primal thm1-10}) w.r.t. $\mathbf{y}$, we get the equation in (\ref{primal thm1-11}), as shown at the top of next page. 
By substituting (\ref{app thm1-1}), (\ref{app thm1-4}), (\ref{primal thm1-8}), and (\ref{primal thm1-9}) into (\ref{primal thm1-11}), we get  (\ref{primal mas-obj-grad-y-i}). 
Therefore, we complete the proof of Theorem~\ref{primal thm1}.1.

\begin{figure*}[ht]
	\begin{equation}\label{primal thm1-11}
		\begin{aligned}
			\nabla f^{\dagger}_{i}(\mathbf{y})
			\!=\!&\nabla\mathbf{X}^{\dagger}_i(\mathbf{y})\left(\nabla_{\mathbf{x}_i} f_i(\mathbf{X}^{\dagger}_i(\mathbf{y}), \mathbf{y})
			+\nabla_{\mathbf{x}_i} \tilde{\mathbf{g}}_i(\mathbf{X}^{\dagger}_i(\mathbf{y}), \mathbf{y}) \widetilde{\mathbf{M}}_i(\mathbf{y})
			+\nabla_{\mathbf{x}_i} \tilde{\mathbf{h}}_i(\mathbf{X}^{\dagger}_i(\mathbf{y}), \mathbf{y}) \tilde{\boldsymbol{\Lambda}}_i(\mathbf{y})
			+\nabla \mathbf{g}_{i}(\mathbf{X}^{\dagger}_i(\mathbf{y})) \mathbf{M}_i(\mathbf{y})
			\right)\\
			&+\nabla\widetilde{\mathbf{M}}_{i}(\mathbf{y})\tilde{\mathbf{g}}_i(\mathbf{X}^{\dagger}_i(\mathbf{y}), \mathbf{y})
			+\nabla\tilde{\boldsymbol{\Lambda}}_{i}(\mathbf{y})\tilde{\mathbf{h}}_i(\mathbf{X}^{\dagger}_i(\mathbf{y}), \mathbf{y})
			+\nabla\mathbf{M}_{i}(\mathbf{y})\mathbf{g}_{i}(\mathbf{X}^{\dagger}_i(\mathbf{y}))\\
			&+
			\nabla_{\mathbf{y}}f_i(\mathbf{X}^{\dagger}_i(\mathbf{y}), \mathbf{y})
			+\nabla_{\mathbf{y}}\tilde{\mathbf{g}}_i(\mathbf{X}^{\dagger}_i(\mathbf{y}), \mathbf{y})\widetilde{\mathbf{M}}_{i}(\mathbf{y})
			+\nabla_{\mathbf{y}}\tilde{\mathbf{h}}_i(\mathbf{X}^{\dagger}_i(\mathbf{y}), \mathbf{y})\tilde{\boldsymbol{\Lambda}}_{i}(\mathbf{y}).
		\end{aligned}
	\end{equation}
		\noindent\hrulefill
	\begin{gather}
		\nabla_{\mathbf{x}_i} f_i(\mathbf{X}^{\dagger}_i(\mathbf{y}^{\dagger}), \mathbf{y}^{\dagger})
		+\nabla_{\mathbf{x}_i} \tilde{\mathbf{g}}_i(\mathbf{X}^{\dagger}_i(\mathbf{y}^{\dagger}), \mathbf{y}^{\dagger}) \widetilde{\mathbf{M}}_i(\mathbf{y}^{\dagger}) 
		+\nabla_{\mathbf{x}_i} \tilde{\mathbf{h}}_i(\mathbf{X}^{\dagger}_i(\mathbf{y}^{\dagger}), \mathbf{y}^{\dagger}) \tilde{\boldsymbol{\Lambda}}_i(\mathbf{y}^{\dagger})
		+\nabla \mathbf{g}_{i}(\mathbf{X}^{\dagger}_i(\mathbf{y}^{\dagger} )) \mathbf{M}_i(\mathbf{y}^{\dagger})
		=\mathbf{0}, \ 
		i\in\mathcal{I},
		\label{kkt-state-1}\\
		\begin{gathered}
			\sum_{i\in\mathcal{I}}\left(
			\nabla_{\mathbf{y}}f_i(\mathbf{X}^{\dagger}_i(\mathbf{y}^{\dagger}), \mathbf{y}^{\dagger} )
			+\nabla_{\mathbf{y}}\tilde{\mathbf{g}}_i(\mathbf{X}^{\dagger}_i(\mathbf{y}^{\dagger}), \mathbf{y}^{\dagger})\widetilde{\mathbf{M}}_{i}(\mathbf{y}^{\dagger} )
			+\nabla_{\mathbf{y}}\tilde{\mathbf{h}}_i(\mathbf{X}^{\dagger}_i(\mathbf{y}^{\dagger}), \mathbf{y}^{\dagger})\tilde{\boldsymbol{\Lambda}}_{i}(\mathbf{y}^{\dagger} ) \right)^{T}\!\!\!(\mathbf{y}-\mathbf{y}^{\dagger}) \\ 
			+ (\nabla f_0(\mathbf{y}^{\dagger})
			+\nabla\mathbf{g}_0(\mathbf{y}^{\dagger})\boldsymbol{\mu}_{0})^{T}(\mathbf{y}-\mathbf{y}^{\dagger})\geq 0, \
			\forall \mathbf{y}\in\mathcal{Y},
		\end{gathered}\label{kkt-state-2}\\
		\widetilde{\mathbf{M}}_i(\mathbf{y}^{\dagger})\odot\tilde{\mathbf{g}}_i(\mathbf{X}^{\dagger}_i(\mathbf{y}^{\dagger}), \mathbf{y}^{\dagger})=\mathbf{0},\  
		\mathbf{M}_i(\mathbf{y}^{\dagger})\odot\mathbf{g}_{i}(\mathbf{X}^{\dagger}_i(\mathbf{y}^{\dagger})) =\mathbf{0},  \
		\widetilde{\mathbf{M}}_i(\mathbf{y}^{\dagger})\succeq\mathbf{0}, \
		\mathbf{M}_i(\mathbf{y}^{\dagger})\succeq\mathbf{0}, \ i\in\mathcal{I}, 
		\label{kkt-state-3}\\
		\tilde{\mathbf{g}}_i(\mathbf{X}^{\dagger}_i(\mathbf{y}^{\dagger}), \mathbf{y}^{\dagger})\preceq \mathbf{0},  \
		\tilde{\mathbf{h}}_i(\mathbf{X}^{\dagger}_i(\mathbf{y}^{\dagger}), \mathbf{y}^{\dagger})= \mathbf{0},  \
		\mathbf{g}_{i}(\mathbf{X}^{\dagger}_i(\mathbf{y}^{\dagger}))\preceq \mathbf{0},  \
		\mathbf{X}^{\dagger}_i(\mathbf{y}^{\dagger})\in \mathcal{X}_i,\  
		i\in\mathcal{I},
		\label{kkt-state-4}\\
		\boldsymbol{\mu}_{0}\odot\mathbf{g}_0(\mathbf{y}^{\dagger})=\mathbf{0},\
		\boldsymbol{\mu}_{0}\succeq\mathbf{0},\ 
		\mathbf{g}_0(\mathbf{y}^{\dagger})\preceq \mathbf{0},\
		\mathbf{y}^{\dagger}\in\mathcal{Y}.
		\label{kkt-state-5}
	\end{gather}
	\hrule 
	\begin{align}\label{app thm1.2}
		\nabla f^{\dagger}(\mathbf{y}^{\dagger})=\nabla f_0(\mathbf{y})+\sum_{i\in \mathcal{I}}\left(\nabla_{\mathbf{y}}f_i(\mathbf{X}^{\dagger}_i(\mathbf{y}), \mathbf{y})
		+\nabla_{\mathbf{y}}\tilde{\mathbf{g}}_i(\mathbf{X}^{\dagger}_i(\mathbf{y}), \mathbf{y})\widetilde{\mathbf{M}}_{i}(\mathbf{y})
		+\nabla_{\mathbf{y}}\tilde{\mathbf{h}}_i(\mathbf{X}^{\dagger}_i(\mathbf{y}), \mathbf{y})\tilde{\boldsymbol{\Lambda}}_{i}(\mathbf{y})\right).
	\end{align}
	\noindent\hrulefill
\end{figure*}

\subsubsection{Proof of Theorem~\ref{primal thm1}.2}
Since $\mathbf{y}^{\dagger}\in\mathcal{N}$ is a stationary point of the master problem $\mathcal{P}^{pd,\dagger}_{mas}$, there exists Lagrange multiplier $\boldsymbol{\mu}_{0}$ which together with $\mathbf{y}^{\dagger}$ satisfies the following KKT conditions:
\begin{gather}
	(\nabla f^{\dagger}(\mathbf{y}^{\dagger})+\nabla\mathbf{g}_0(\mathbf{y}^{\dagger})\boldsymbol{\mu}_{0})^{T}(\mathbf{y}-\mathbf{y}^{\dagger})\geq 0,\ \forall \mathbf{y}\in\mathcal{Y},\label{primal thm1-13}\\
	\boldsymbol{\mu}_{0}\odot\mathbf{g}_0(\mathbf{y}^{\dagger})=\mathbf{0}, \ \boldsymbol{\mu}_{0}\succeq\mathbf{0}, \
	\mathbf{g}_0(\mathbf{y}^{\dagger})\preceq \mathbf{0}, \
	\mathbf{y}^{\dagger}\in\mathcal{Y}.\label{primal thm1-15}
\end{gather}
To prove that $(\mathbf{X}^{\dagger}(\mathbf{y}^{\dagger}), \mathbf{y}^{\dagger})$ is a stationary point of Problem $\mathcal{P}^{pd}$, we only need to prove that $(\mathbf{X}^{\dagger}(\mathbf{y}^{\dagger}), \mathbf{y}^{\dagger})$ together with $\boldsymbol{\mu}_0$, $\widetilde{\mathbf{M}}_i(\mathbf{y}^{\dagger})$, $\tilde{\boldsymbol{\Lambda}}_i(\mathbf{y}^{\dagger})$, and $\mathbf{M}_i(\mathbf{y}^{\dagger})$, $i\in\mathcal{I}$ satisfies the KKT conditions in (\ref{kkt-state-1})-(\ref{kkt-state-5}) for Problem $\mathcal{P}^{pd}$, as shown at the top of next page.
First, by (\ref{app thm1-1})-(\ref{app thm1-5}) and (\ref{primal thm1-15}), we can immediately show that (\ref{kkt-state-1}) and (\ref{kkt-state-3})-(\ref{kkt-state-5}) hold true.
Besides, by (\ref{primal mas-obj-grad-y-i}), we can get \eqref{app thm1.2}, as shown at the top of next page.
Then, by substituting \eqref{app thm1.2} into (\ref{primal thm1-13}), we can get (\ref{kkt-state-2}).
Therefore, we complete the proof of Theorem~\ref{primal thm1}.2.

\subsection{Proof of Theorem~\ref{primal thm2}}\label{primal proof-thm2}
Note that under Assumptions~\ref{regularity}, \ref{SCA-assump-surrogate-constriant}, \ref{primal assump-p}, \ref{primal assump-stationary-2}, and \ref{primal assump-mas-appro-object}, Lemma~\ref{primal lem1} and Theorem~\ref{primal thm1} hold.
Let $\mathcal{N}_{\mathbf{y}^{\ddagger}}$, $\mathcal{N}_{\mathbf{x}_{i}^{\ddagger}}$, $\mathcal{N}_{\tilde{\boldsymbol{\mu}}_{i}^{\ddagger}}$, 
$\mathcal{N}_{\boldsymbol{\mu}_{i}^{\ddagger}}$, $\mathcal{N}_{\tilde{\boldsymbol{\lambda}}_{i}^{\ddagger}}$, $i\in\mathcal{I}$ denote the neighborhoods of $\mathbf{y}^{\ddagger}$, $\mathbf{x}_{i}^{\ddagger}$, $\tilde{\boldsymbol{\mu}}_{i}^{\ddagger}$,   $\boldsymbol{\mu}_{i}^{\ddagger}$, $\tilde{\boldsymbol{\lambda}}_{i}^{\ddagger}$, $i\in\mathcal{I}$, respectively, specified in Lemma~\ref{primal lem1}.
Suppose that $\{\mathbf{y}^{(k)}\}_{k\in\mathbb{N}}\subseteq\mathcal{N}_{\mathbf{y}^{\ddagger}}$, $\{\mathbf{x}^{\dag}_i(\mathbf{y}^{(k)})\}_{k\in\mathbb{N}}\subseteq\mathcal{N}_{\mathbf{x}_{i}^{\ddagger}}$,
$\{\tilde{\boldsymbol{\mu}}_i(\mathbf{y}^{(k)})\}_{k\in\mathbb{N}}\subseteq\mathcal{N}_{\tilde{\boldsymbol{\mu}}_{i}^{\ddagger}}$,   $\{\boldsymbol{\mu}_i(\mathbf{y}^{(k)})\}_{k\in\mathbb{N}}\subseteq\mathcal{N}_{\boldsymbol{\mu}_{i}^{\ddagger}}$, $\{\tilde{\boldsymbol{\lambda}}_i(\mathbf{y}^{(k)})\}_{k\in\mathbb{N}}\subseteq\mathcal{N}_{\tilde{\boldsymbol{\lambda}}_{i}^{\ddagger}}$, $i\in\mathcal{I}$. 
Then, we can conclude from Lemma~\ref{primal lem1}.2 that for all $i\in\mathcal{I}$ and $k$, the stationary point $\mathbf{x}^{\dag}_i(\mathbf{y}^{(k)})$ and its Lagrange multipliers $\tilde{\boldsymbol{\mu}}_i(\mathbf{y}^{(k)})$,   $\tilde{\boldsymbol{\lambda}}_i(\mathbf{y}^{(k)})$, and $\boldsymbol{\mu}_i(\mathbf{y}^{(k)})$ (generated by Algorithm~\ref{primal alg:alg1}) are the same as the ones determined by the functions $\mathbf{X}^{\dag}_i$, $\widetilde{\mathbf{M}}_i$,   $\tilde{\boldsymbol{\Lambda}}_i$, and $\mathbf{M}_i$ at $\mathbf{y}^{(k)}$,  specified in Lemma~\ref{primal lem1}.1.
Thus, in the following, for ease of exposition, for all $i\in\mathcal{I}$, we use $\mathbf{x}^{\dag}_i$, $\tilde{\boldsymbol{\mu}}_i$,   $\tilde{\boldsymbol{\lambda}}_i$, and $\boldsymbol{\mu}_i$ to represent the functions $\mathbf{X}^{\dag}_i$, $\widetilde{\mathbf{M}}_i$,   $\tilde{\boldsymbol{\Lambda}}_i$, and $\mathbf{M}_i$, respectively. 
Accordingly, the function $f^{\dagger}_{\mathcal{I}}(\mathbf{y})$ 
determined by $\mathbf{x}^{\dag}_i(\mathbf{y})$ for $i\in\mathcal{I}$ and $\mathbf{y}\in\mathcal{N}_{\mathbf{y}^{\ddagger}}$  is a single-valued continuously differentiable function on $\mathcal{N}_{\mathbf{y}^{\ddagger}}$, which implies that $f^{\dagger}(\mathbf{y})$ is a single-valued continuously differentiable function on $\mathcal{N}_{\mathbf{y}^{\ddagger}}$. 
Furthermore, without loss of generality, assume that the neighborhood of $\mathbf{y}^{\ddagger}$ specified in Assumption~\ref{primal assump-mas-appro-object} is also $\mathcal{N}_{\mathbf{y}^{\ddagger}}$.
To prove Theorem~\ref{primal thm2}, we only need to prove that  Theorem~\ref{primal thm2}.1 and Theorem~\ref{primal thm2}.2 hold under Assumptions~\ref{regularity}, \ref{SCA-assump-surrogate-constriant}, \ref{primal assump-p}, \ref{primal assump-stationary-2}, and \ref{primal assump-mas-appro-object} and $\{\mathbf{y}^{(k)}\}_{k\in\mathbb{N}}\subseteq\mathcal{N}_{\mathbf{y}^{\ddagger}}$, $\{\mathbf{x}^{\dag}_i(\mathbf{y}^{(k)})\}_{k\in\mathbb{N}}\subseteq\mathcal{N}_{\mathbf{x}_{i}^{\ddagger}}$,
$\{\tilde{\boldsymbol{\mu}}_i(\mathbf{y}^{(k)})\}_{k\in\mathbb{N}}\subseteq\mathcal{N}_{\tilde{\boldsymbol{\mu}}_{i}^{\ddagger}}$,   $\{\boldsymbol{\mu}_i(\mathbf{y}^{(k)})\}_{k\in\mathbb{N}}\subseteq\mathcal{N}_{\boldsymbol{\mu}_{i}^{\ddagger}}$, $\{\tilde{\boldsymbol{\lambda}}_i(\mathbf{y}^{(k)})\}_{k\in\mathbb{N}}\subseteq\mathcal{N}_{\tilde{\boldsymbol{\lambda}}_{i}^{\ddagger}}$, $i\in\mathcal{I}$.

\subsubsection{Proof of Theorem~\ref{primal thm2}.1}
Steps~8-10 of Algorithm~\ref{primal alg:alg1} can be viewed as solving the master problem $\mathcal{P}^{pd,\dagger}_{mas}$ with the objective function $f^{\dagger}(\mathbf{y})$ by the SCA-based algorithm in \cite{scutari2017parallel}.
Moreover, Theorem~\ref{primal thm2}.1 is actually the same as the convergence theorem of the SCA-based algorithm in \cite[Theorem~2]{scutari2017parallel}.
Therefore, we only need to prove that the assumptions  in \cite[Theorem~2]{scutari2017parallel} are satisfied for the SCA-based algorithm used for solving the master problem $\mathcal{P}^{pd,\dagger}_{mas}$.

First, to show the first statement of Theorem~\ref{primal thm2}.1, we need to prove that \cite[Assumptions~1-3 and 5]{scutari2017parallel} are satisfied.
Based on the above discussion, we can extend $f^{\dagger}$ to a continuously differentiable function on $\mathcal{V}$ whose gradient is Lipschitz continuous on $\mathcal{Y}$.
We still denote the extended function as $f^{\dagger}$. 
Then, together with Assumptions~\ref{regularity} and \ref{primal assump-p}, we can conclude that the master problem $\mathcal{P}^{pd,\dagger}_{mas}$ satisfies \cite[Assumptions~1 and 5]{scutari2017parallel}.
By Assumption~\ref{SCA-assump-surrogate-constriant}, the approximate function $\mathbf{G}_{0}$ satisfies \cite[Assumptions~3]{scutari2017parallel}.
Besides, it is already known that the approximate function $F^{\dagger}$ satisfies \cite[Assumption~2]{scutari2017parallel}.
Then, the first statement of Theorem~\ref{primal thm2}.1 follows immediately from \cite[Theorem~2.(c)]{scutari2017parallel}.

Next, to show the second statement of Theorem~\ref{primal thm2}.1, it remains to prove that \cite[Assumptions~B4, B5, and C7]{scutari2017parallel} are satisfied.
First, it is easy to show that the approximate function $F^{\dagger}_{\mathcal{I}}$, which is constructed by the second-order Taylor approximation of $f^{\dagger}_{\mathcal{I}}$, satisfies \cite[Assumptions B4 and B5]{scutari2017parallel}.
Moreover, $F_0$ satisfies \cite[Assumptions B4 and B5]{scutari2017parallel} since $\nabla_{\mathbf{y}}F_0(\cdot;\mathbf{z})$ and $\nabla_{\mathbf{y}}F_0(\mathbf{y};\cdot)$ are both uniformly Lipschitz continuous on $\mathcal{Y}$.
Therefore, the approximate function $F^{\dagger}$ satisfies \cite[Assumptions B4 and B5]{scutari2017parallel}.
Second, $\mathbf{G}_0(\cdot; \cdot)$ is Lipschitz continuous on $\mathcal{Y}\times \mathcal{Y}$ and hence satisfies \cite[Assumptions C4]{scutari2017parallel}.
Thus, the second statement of Theorem~\ref{primal thm2}.1 follows immediately from \cite[Theorem~2.(c)]{scutari2017parallel}.

\subsubsection{Proof of Theorem~\ref{primal thm2}.2}
For all $i\in\mathcal{I}$, the subsequence $\{\mathbf{x}^{\dag}_i(\mathbf{y}^{(k)})\}_{k\in\mathcal{K}}$ converges to $\mathbf{x}^{\dag}_i(\mathbf{y}^{(\infty)})$, since $\mathbf{x}^{\dag}_i(\cdot)$ is continuously differentiable, and $\{\mathbf{y}^{(k)}\}_{k\in\mathcal{K}}$ converges to $\mathbf{y}^{(\infty)}$.
That is, $\mathbf{x}^{(\infty)}_i=\mathbf{x}^{\dag}_i(\mathbf{y}^{(\infty)})$, $i\in\mathcal{I}$.
Then, by Theorem~\ref{primal thm1}.2 and Theorem~\ref{primal thm2}.1, we can immediately get Theorem~\ref{primal thm2}.2.


\subsection{Equivalent Transformations for Nonconvex Problems}\label{app equivalent nonconvex prob}
We describe some general transformations that yield equivalent nonconvex problems.

\textbf{Introducing Linear Equality Constraints:}
Consider the following nonconvex problem:
\begin{align*}
	\mathcal{P}_{1}: \ \min_{\mathbf{x}_{1}, \mathbf{x}_{2}} \ &f(\mathbf{x}_{1}, \mathbf{x}_{2})\\  
	s.t. \ 
	&\tilde{\mathbf{g}}(\mathbf{x}_{1}, \mathbf{x}_2)\preceq \mathbf{0},\\
	&\tilde{\mathbf{h}}(\mathbf{x}_1, \mathbf{x}_{2})= \mathbf{0},\\
	&\mathbf{g}_{i}(\mathbf{x}_{i})\preceq \mathbf{0},\ i=1,2,\\
	&\mathbf{x}_{i}\in \mathcal{X}_{i}\ i=1,2,
\end{align*}
where $\mathbf{x}_{i}\in\mathbb{R}^{n_{i}}$, $f: \mathbb{R}^{n_1+n_2}\rightarrow \mathbb{R}$,
$\tilde{\mathbf{g}}: \mathbb{R}^{n_1+n_2}\rightarrow \mathbb{R}^{\tilde{r}}$,
$\tilde{\mathbf{h}}: \mathbb{R}^{n_1+n_2}\rightarrow \mathbb{R}^{\tilde{m}}$,
and $\mathbf{g}_{i}: \mathbb{R}^{n_i}\rightarrow \mathbb{R}^{r}$ are continuously differentiable.
and $\mathcal{X}_{i}$ is nonempty, closed, and convex, for $i=1,2$.
We replace $\mathbf{x}_{1}$ with new variable  $\mathbf{z}\triangleq(\mathbf{x}_{1},\mathbf{x}_{2})\in\mathbb{R}^{n_1+n_{2}}$ and introduce new linear equality constraint $\begin{bmatrix}
	\mathbf{0}_{n_{2}\times n_{1}} & \mathbf{I}_{n_{2}}
\end{bmatrix}\mathbf{z}=\mathbf{x}_{2}$ and form the following nonconvex problem:
\begin{align*}
	\mathcal{P}_{1}': \ \min_{\mathbf{z}, \mathbf{x}_{2}} \ &f(\mathbf{z})\\  
	s.t. \ 
	&\mathbf{C}\mathbf{z}=\mathbf{x}_{2},\\
	&\tilde{\mathbf{g}}(\mathbf{z})\preceq \mathbf{0},\\
	&\tilde{\mathbf{h}}(\mathbf{z})= \mathbf{0},\\
	&\mathbf{g}_{1}(\mathbf{A}\mathbf{z})\preceq \mathbf{0},\\
	&\mathbf{g}_{2}(\mathbf{x}_{2})\preceq \mathbf{0},\\
	&\mathbf{z}\in \mathcal{Z}, \\
	&\mathbf{x}_{2}\in \mathcal{X}_{2},
\end{align*}
where $\mathbf{A}\triangleq\begin{bmatrix}
	\mathbf{I}_{n_1} & \mathbf{0}_{n_1\times n_{2}}
\end{bmatrix}$, $\mathbf{C}\triangleq\begin{bmatrix}
	\mathbf{0}_{n_{2}\times n_{1}} & \mathbf{I}_{n_{2}}
\end{bmatrix}$, and $\mathcal{Z}\triangleq\mathcal{X}_1\times\mathcal{X}_{2}$.

\begin{figure*}[ht]
	\begin{gather}
		\sum_{i=1}^{2}(\nabla_{\mathbf{x}_{i}} f(\mathbf{x}_{1}^{\dagger}, \mathbf{x}_{2}^{\dagger})
		\!+\!\nabla_{\mathbf{x}_{i}}\tilde{\mathbf{g}}(\mathbf{x}_{1}^{\dagger}, \mathbf{x}_{2}^{\dagger})\tilde{\boldsymbol{\mu}} 
		\!+\!\nabla_{\mathbf{x}_{i}}\tilde{\mathbf{h}}(\mathbf{x}_{1}^{\dagger}, \mathbf{x}_{2}^{\dagger})\tilde{\boldsymbol{\lambda}} \!+\!\nabla\mathbf{g}_{i}(\mathbf{x}_{i}^{\dagger})\boldsymbol{\mu}_{i})^{T}(\mathbf{x}_{i}\!-\!\mathbf{x}_{i}^{\dagger})\geq\mathbf{0},
		\ \forall (\mathbf{x}_{1},\mathbf{x}_{2})\in\mathcal{X}_{1}\times\mathcal{X}_{2},\label{proof lemma-equivalent-noncvx-1-kkt-1}
		\\
		\tilde{\boldsymbol{\mu}}\odot\tilde{\mathbf{g}}(\mathbf{x}_{1}^{\dagger}, \mathbf{x}_{2}^{\dagger})=\mathbf{0}, \
		\tilde{\mathbf{g}}(\mathbf{x}_{1}^{\dagger}, \mathbf{x}_{2}^{\dagger})\preceq\mathbf{0}, \
		\tilde{\mathbf{h}}(\mathbf{x}_{1}^{\dagger}, \mathbf{x}_{2}^{\dagger})=\mathbf{0}, \
		\boldsymbol{\mu}_{i}\odot\mathbf{g}_{i}(\mathbf{x}_{i}^{\dagger})=\mathbf{0}, \ 
		\mathbf{g}_{i}(\mathbf{x}_{i}^{\dagger})\preceq\mathbf{0},  \
		\mathbf{x}_{i}^{\dagger}\in\mathcal{X}_{i}, i=1,2.\label{proof lemma-equivalent-noncvx-1-kkt-2}
	\end{gather}
	\noindent\hrule
	\begin{gather}
		(\nabla f(\mathbf{z}^{\dagger})
		+\nabla\tilde{\mathbf{g}}(\mathbf{z}^{\dagger})\tilde{\boldsymbol{\mu}}
		+\nabla\tilde{\mathbf{h}}(\mathbf{z}^{\dagger})\tilde{\boldsymbol{\lambda}}
		+\mathbf{A}^{T}\nabla\mathbf{g}_{1}(\mathbf{A}\mathbf{z}^{\dagger})\boldsymbol{\mu}_{1}
		+\mathbf{C}^{T}\boldsymbol{\lambda})^{T}(\mathbf{z}-\mathbf{z}^{\dagger})\nonumber\\
		+(-\mathbf{I}_{n_2}\boldsymbol{\lambda}+\nabla\mathbf{g}_{2}(\mathbf{x}_{2})\boldsymbol{\mu}_{2})^{T}(\mathbf{x}_{2}-\mathbf{x}_{2}^{\dagger})\geq\mathbf{0}, \ \forall \mathbf{z}\in\mathcal{Z}, \ \mathbf{x}_{2}\in\mathcal{X}_{2},\label{proof lemma-equivalent-noncvx-1-kkt-3}
		\\
		\tilde{\boldsymbol{\mu}}\odot\tilde{\mathbf{g}}(\mathbf{z}^{\dagger})=\mathbf{0}, \
		\boldsymbol{\mu}_{1}\odot\mathbf{g}_{1}(\mathbf{A}\mathbf{z}^{\dagger})=\mathbf{0}, \
		\boldsymbol{\mu}_{2}\odot\mathbf{g}_{2}(\mathbf{x}_{2}^{\dagger})=\mathbf{0}, \nonumber\\ 
		\mathbf{C}\mathbf{z}^{\dagger}=\mathbf{x}_{2}^{\dagger},\
		\tilde{\mathbf{g}}(\mathbf{z}^{\dagger})\preceq\mathbf{0}, \
		\tilde{\mathbf{h}}(\mathbf{z}^{\dagger})=\mathbf{0}, \
		\mathbf{g}_{1}(\mathbf{A}\mathbf{z}^{\dagger})\preceq\mathbf{0}, \
		\mathbf{g}_{2}(\mathbf{x}_{2}^{\dagger})\preceq\mathbf{0}, \
		\mathbf{z}^{\dagger}\in\mathcal{Z}, \mathbf{x}_{2}^{\dagger}\in\mathcal{X}_{2}.\label{proof lemma-equivalent-noncvx-1-kkt-4}
	\end{gather}
	\noindent\hrule
\end{figure*}

\begin{lemma}\label{lemma equivalent-noncvx-1}
	Problem $\mathcal{P}_{1}$ and Problem $\mathcal{P}_{1}'$ are equivalent.
\end{lemma}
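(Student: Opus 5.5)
Per Definition~\ref{definition equivalent-problem}, we must show that from any stationary point of $\mathcal{P}_1$ a stationary point of $\mathcal{P}_1'$ is readily found, and conversely. The KKT conditions of the two problems are written out in \eqref{proof lemma-equivalent-noncvx-1-kkt-1}--\eqref{proof lemma-equivalent-noncvx-1-kkt-2} for $\mathcal{P}_1$ and \eqref{proof lemma-equivalent-noncvx-1-kkt-3}--\eqref{proof lemma-equivalent-noncvx-1-kkt-4} for $\mathcal{P}_1'$. The plan is to map points and multipliers explicitly in each direction and verify each condition, using the structure $\mathbf{A}\mathbf{z}=\mathbf{x}_1$, $\mathbf{C}\mathbf{z}=\mathbf{x}_2$, and $\mathcal{Z}=\mathcal{X}_1\times\mathcal{X}_2$.

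\emph{From $\mathcal{P}_1$ to $\mathcal{P}_1'$.} Let $(\mathbf{x}_1^\dagger,\mathbf{x}_2^\dagger)$ be stationary for $\mathcal{P}_1$ with multipliers $\tilde{\boldsymbol{\mu}},\tilde{\boldsymbol{\lambda}},\boldsymbol{\mu}_1,\boldsymbol{\mu}_2$. Set $\mathbf{z}^\dagger=(\mathbf{x}_1^\dagger,\mathbf{x}_2^\dagger)$, keep $\mathbf{x}_2^\dagger$, and keep the same $\tilde{\boldsymbol{\mu}},\tilde{\boldsymbol{\lambda}},\boldsymbol{\mu}_1,\boldsymbol{\mu}_2$. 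For the new equality $\mathbf{C}\mathbf{z}=\mathbf{x}_2$, take $\boldsymbol{\lambda}=\nabla\mathbf{g}_2(\mathbf{x}_2^\dagger)\boldsymbol{\mu}_2$. With this choice the $\mathbf{x}_2$-block of \eqref{proof lemma-equivalent-noncvx-1-kkt-3} vanishes identically. The $\mathbf{z}$-block is
\[
\nabla f+\nabla\tilde{\mathbf{g}}\tilde{\boldsymbol{\mu}}+\nabla\tilde{\mathbf{h}}\tilde{\boldsymbol{\lambda}}+\mathbf{A}^T\nabla\mathbf{g}_1\boldsymbol{\mu}_1+\mathbf{C}^T\nabla\mathbf{g}_2\boldsymbol{\mu}_2 .
\]
Its $\mathbf{x}_i$-components coincide with the $i$-th summand in \eqref{proof lemma-equivalent-noncvx-1-kkt-1}. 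Since $\mathbf{z}-\mathbf{z}^\dagger=(\mathbf{x}_1-\mathbf{x}_1^\dagger,\mathbf{x}_2-\mathbf{x}_2^\dagger)$ ranges over exactly the same set as in \eqref{proof lemma-equivalent-noncvx-1-kkt-1}, the variational inequality holds. Complementarity and feasibility in \eqref{proof lemma-equivalent-noncvx-1-kkt-4} follow directly from \eqref{proof lemma-equivalent-noncvx-1-kkt-2}, because $\mathbf{A}\mathbf{z}^\dagger=\mathbf{x}_1^\dagger$ and $\mathbf{C}\mathbf{z}^\dagger=\mathbf{x}_2^\dagger$.

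\emph{From $\mathcal{P}_1'$ to $\mathcal{P}_1$.} Let $(\mathbf{z}^\dagger,\mathbf{x}_2^\dagger)$ satisfy \eqref{proof lemma-equivalent-noncvx-1-kkt-3}--\eqref{proof lemma-equivalent-noncvx-1-kkt-4}, and set $(\mathbf{x}_1^\dagger,\mathbf{x}_2^\dagger)=(\mathbf{A}\mathbf{z}^\dagger,\mathbf{C}\mathbf{z}^\dagger)$, which is consistent with the constraint $\mathbf{C}\mathbf{z}^\dagger=\mathbf{x}_2^\dagger$. We keep $\tilde{\boldsymbol{\mu}},\tilde{\boldsymbol{\lambda}},\boldsymbol{\mu}_1,\boldsymbol{\mu}_2$. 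The key step is to eliminate $\boldsymbol{\lambda}$. Restrict \eqref{proof lemma-equivalent-noncvx-1-kkt-3} to the feasible directions $\mathbf{z}=(\mathbf{x}_1,\mathbf{x}_2)$ with the same $\mathbf{x}_2$ in both arguments; this is allowed since $\mathbf{z}\in\mathcal{Z}$ exactly when $\mathbf{x}_1\in\mathcal{X}_1$ and $\mathbf{x}_2\in\mathcal{X}_2$. On these directions the $\mathbf{C}^T\boldsymbol{\lambda}$ term contributes $\boldsymbol{\lambda}^T(\mathbf{x}_2-\mathbf{x}_2^\dagger)$ and the $-\boldsymbol{\lambda}$ term contributes $-\boldsymbol{\lambda}^T(\mathbf{x}_2-\mathbf{x}_2^\dagger)$, so they cancel. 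What remains is exactly \eqref{proof lemma-equivalent-noncvx-1-kkt-1} for all $(\mathbf{x}_1,\mathbf{x}_2)\in\mathcal{X}_1\times\mathcal{X}_2$. The remaining conditions \eqref{proof lemma-equivalent-noncvx-1-kkt-2} are read off from \eqref{proof lemma-equivalent-noncvx-1-kkt-4}.

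The only nonroutine point, and the one I expect to be the main obstacle, is this cancellation of $\boldsymbol{\lambda}$ in the reverse direction. It requires that the joint test set in \eqref{proof lemma-equivalent-noncvx-1-kkt-3} contains the coupled choice in which $\mathbf{z}$ and the separate $\mathbf{x}_2$ move together. This holds because the test set is the full product $\mathcal{Z}\times\mathcal{X}_2$ and the projection $\mathbf{C}$ maps $\mathcal{Z}$ onto $\mathcal{X}_2$. In the forward direction, the corresponding point is choosing $\boldsymbol{\lambda}$ to kill the $\mathbf{x}_2$-block, and this works for every $\mathbf{x}_2\in\mathcal{X}_2$.
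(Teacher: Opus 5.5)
Your proof is correct. Your reverse direction matches the paper's: you restrict \eqref{proof lemma-equivalent-noncvx-1-kkt-3} to test points with $\mathbf{C}\mathbf{z}=\mathbf{x}_2$, the two $\boldsymbol{\lambda}$ terms cancel, and \eqref{proof lemma-equivalent-noncvx-1-kkt-1} remains.

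The forward direction differs from the paper in the choice of the new multiplier, and your choice is the one that works. The paper sets $\boldsymbol{\lambda}=\mathbf{0}$. Because the test set $\mathcal{Z}\times\mathcal{X}_2$ is a product, \eqref{proof lemma-equivalent-noncvx-1-kkt-3} then splits into two separate conditions:
\begin{itemize}
\item a variational inequality for $\nabla\mathbf{g}_2(\mathbf{x}_2^{\dagger})\boldsymbol{\mu}_2$ alone over $\mathcal{X}_2$;
\item a variational inequality for the $\mathbf{z}$-block, whose $\mathbf{x}_2$-component is missing the $\nabla\mathbf{g}_2\boldsymbol{\mu}_2$ term.
\end{itemize}
Neither follows from \eqref{proof lemma-equivalent-noncvx-1-kkt-1} when $\nabla\mathbf{g}_2(\mathbf{x}_2^{\dagger})\boldsymbol{\mu}_2\neq\mathbf{0}$. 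For example, take $f=-x_2$, $g_2(x_2)=x_2-1$, and $\mathcal{X}_1=\mathcal{X}_2=\mathbb{R}$. Then stationarity in $\mathcal{P}_1'$ forces $\lambda=\mu_2=1$, so $\lambda=0$ cannot satisfy it.

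Your choice $\boldsymbol{\lambda}=\nabla\mathbf{g}_2(\mathbf{x}_2^{\dagger})\boldsymbol{\mu}_2$ zeroes the $\mathbf{x}_2$-block. Through $\mathbf{C}^{T}\boldsymbol{\lambda}$ it also puts the missing term back into the $\mathbf{x}_2$-component of the $\mathbf{z}$-block. Your argument therefore repairs a genuine gap in the paper's forward direction rather than merely rephrasing it.
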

\begin{IEEEproof}
	If $(\mathbf{x}^{\dagger}_{1}, \mathbf{x}^{\dagger}_{2})$ is a stationary point of Problem $\mathcal{P}_{1}$, then there exist Lagrange multipliers $\tilde{\boldsymbol{\mu}}\succeq\mathbf{0}$, $\tilde{\boldsymbol{\lambda}}$ and $\boldsymbol{\mu}_{i}\succeq\mathbf{0}$, $i=1,2$ that together with $(\mathbf{x}^{\dagger}_{1}, \mathbf{x}^{\dagger}_{2})$ satisfy (\ref{proof lemma-equivalent-noncvx-1-kkt-1}) and (\ref{proof lemma-equivalent-noncvx-1-kkt-2}), as shown at the top of this page.
	If $(\mathbf{z}^{\dagger}, \mathbf{x}_{2}^{\dagger})$ is a stationary point of Problem $\mathcal{P}_{1}'$, then there exist Lagrange multipliers
	$\boldsymbol{\lambda}$, $\tilde{\boldsymbol{\mu}}\succeq\mathbf{0}$, $\tilde{\boldsymbol{\lambda}}$, $\boldsymbol{\mu}_{1}\succeq\mathbf{0}$, and $\boldsymbol{\mu}_{2}\succeq\mathbf{0}$ that together with $(\mathbf{z}^{\dagger}, \mathbf{x}^{\dagger}_{2})$ satisfy (\ref{proof lemma-equivalent-noncvx-1-kkt-3}) and (\ref{proof lemma-equivalent-noncvx-1-kkt-4}), as shown at the top of this page.
	First, suppose that $(\mathbf{x}^{\dagger}_{1}, \mathbf{x}^{\dagger}_{2})$ is a stationary point of Problem $\mathcal{P}_{1}$, and $\tilde{\boldsymbol{\mu}}\succeq\mathbf{0}$, $\tilde{\boldsymbol{\lambda}}$ and $\boldsymbol{\mu}_{i}\succeq\mathbf{0}$, $i=1,2$ are the corresponding Lagrange multipliers.
	Let $\mathbf{z}^{\dagger}=(\mathbf{x}^{\dagger}_{1}, \mathbf{x}_{2}^{\dagger})$ and
	$\boldsymbol{\lambda}=\mathbf{0}$.
	From (\ref{proof lemma-equivalent-noncvx-1-kkt-1}) and (\ref{proof lemma-equivalent-noncvx-1-kkt-2}), we can show that (\ref{proof lemma-equivalent-noncvx-1-kkt-3}) and (\ref{proof lemma-equivalent-noncvx-1-kkt-4}) hold true for $(\mathbf{z}^{\dagger}, \mathbf{x}_{2}^{\dagger})$, $\tilde{\boldsymbol{\mu}}$, $\tilde{\boldsymbol{\lambda}}$, $\boldsymbol{\mu}_{i}$, $i=1,2$, and $\boldsymbol{\lambda}$, which implies that $(\mathbf{z}^{\dagger}, \mathbf{x}_{2}^{\dagger})$ is a stationary point of Problem $\mathcal{P}_{1}'$.
	Next, suppose that $(\mathbf{z}^{\dagger}, \mathbf{x}^{\dagger}_{2})$ is a stationary point of Problem $\mathcal{P}_{1}'$, and $\boldsymbol{\lambda}$, $\tilde{\boldsymbol{\mu}}\succeq\mathbf{0}$, $\tilde{\boldsymbol{\lambda}}$, $\boldsymbol{\mu}_{1}\succeq\mathbf{0}$, and $\boldsymbol{\mu}_{2}\succeq\mathbf{0}$ are the corresponding Lagrange multipliers.
	From the definition of $\mathbf{C}$, we can get $(\mathbf{C}^{T}\boldsymbol{\lambda})^{T}(\mathbf{z}-\mathbf{z}^{\dagger})+(-\mathbf{I}_{n_2}\boldsymbol{\lambda})^{T}(\mathbf{x}_{2}-\mathbf{x}_{2}^{\dagger})=\mathbf{0}$ for all $\mathbf{C}\mathbf{z}=\mathbf{x}_{2}$ and $\boldsymbol{\lambda}$.
	Thus, (\ref{proof lemma-equivalent-noncvx-1-kkt-3}) can reduce to (\ref{proof lemma-equivalent-noncvx-1-kkt-1}), as shown at the top of the last page, for all $\mathbf{C}\mathbf{z}=\mathbf{x}_{2}$ and $\boldsymbol{\lambda}$.
	Let $\mathbf{x}_{1}^{\dagger}=\mathbf{A}\mathbf{z}^{\dagger}$.
	Then, from (\ref{proof lemma-equivalent-noncvx-1-kkt-3}) and (\ref{proof lemma-equivalent-noncvx-1-kkt-4}), we can show that (\ref{proof lemma-equivalent-noncvx-1-kkt-1}) and (\ref{proof lemma-equivalent-noncvx-1-kkt-2}) hold true for $(\mathbf{x}_{1}^{\dagger}, \mathbf{x}_{2}^{\dagger})$,
	$\tilde{\boldsymbol{\mu}}$, $\tilde{\boldsymbol{\lambda}}$, and $\boldsymbol{\mu}_{i}\succeq\mathbf{0}$, $i=1,2$, which implies that $(\mathbf{x}_{1}^{\dagger}, \mathbf{x}_{2}^{\dagger})$ is a stationary point of Problem $\mathcal{P}_{1}$.
\end{IEEEproof}

\textbf{Introducing separable Constraints:}
We consider the following nonconvex problem:
\begin{align*}
	\mathcal{P}_{2}: \ \min_{\mathbf{x}} \ &f(\mathbf{x})\\  
	s.t. \ 
	&\sum_{i\in \mathcal{I}}\mathbf{g}_{i}(\mathbf{x}_{i})\preceq \mathbf{0},\\
	&\sum_{i\in \mathcal{I}}\mathbf{h}_{i}(\mathbf{x}_{i})= \mathbf{0},\\
	&\mathbf{x}\in \mathcal{X},
\end{align*}
where $\mathbf{x}\triangleq(\mathbf{x}_{1},\cdots,\mathbf{x}_{I})\in\mathbb{R}^{n}$ with $\mathbf{x}_{i}\in\mathbb{R}^{n_{i}}$, $f: \mathbb{R}^{n}\rightarrow \mathbb{R}$, $\mathbf{g}_{i}: \mathbb{R}^{n_{i}}\rightarrow \mathbb{R}^{r}$, and $\mathbf{h}_{i}: \mathbb{R}^{n_{i}}\rightarrow \mathbb{R}^{m}$, $i\in\mathcal{I}$ are continuously differentiable, 
and $\mathcal{X}$ is nonempty, closed, and convex.
We introduce new variables $\mathbf{z}_{i}\in\mathbb{R}^{r}$ and $\mathbf{z}_{i+I}\in\mathbb{R}^{m}$, $i\in\mathcal{I}$, new inequality constraints $\mathbf{g}_{i}(\mathbf{x}_{i})-\mathbf{z}_{i}\preceq \mathbf{0}$, $i\in\mathcal{I}$, and new equality constraints $\mathbf{h}_{i}(\mathbf{x}_{i})-\mathbf{z}_{i+I}=\mathbf{0}$, $i\in\mathcal{I}$,  
and form the following nonconvex problem:
\begin{align*}
	\mathcal{P}_{2}': \ \min_{\mathbf{x},\mathbf{z}} \ &f(\mathbf{x})\\  
	s.t. \ 
	&\mathbf{g}_{i}(\mathbf{x}_{i})-\mathbf{z}_{i}\preceq \mathbf{0},\ i\in\mathcal{I}\\
	&\sum_{i\in \mathcal{I}}\mathbf{z}_{i}\preceq\mathbf{0},\\
	&\mathbf{h}_{i}(\mathbf{x}_{i})-\mathbf{z}_{i+I}=\mathbf{0}, \ i\in\mathcal{I}\\
	&\sum_{i\in \mathcal{I}}\mathbf{z}_{i+I}=\mathbf{0},\\
	&\mathbf{x}\in \mathcal{X}.
\end{align*}

\begin{figure*}[ht]

	\begin{gather}
		\sum_{i\in \mathcal{I}}\left(\nabla_{\mathbf{x}_{i}} f(\mathbf{x}^{\dagger})
		+\nabla\mathbf{g}_{i}(\mathbf{x}^{\dagger}_{i})\boldsymbol{\mu}
		+\nabla\mathbf{h}_{i}(\mathbf{x}^{\dagger}_{i})\boldsymbol{\lambda}
		\right)^{T}(\mathbf{x}_{i}-\mathbf{x}_{i}^{\dagger}), 
		\ \forall \mathbf{x}\in\mathcal{X},\label{proof lemma-equivalent-noncvx-2-kkt-1}\\
		\sum_{i\in \mathcal{I}}\boldsymbol{\mu}\odot\mathbf{g}_{i}(\mathbf{x}^{\dagger}_{i})=\mathbf{0}, \
		\sum_{i\in \mathcal{I}}\mathbf{g}_{i}(\mathbf{x}^{\dagger}_{i})\preceq \mathbf{0}, \
		\sum_{i\in \mathcal{I}}\mathbf{h}_{i}(\mathbf{x}^{\dagger}_{i})= \mathbf{0}, \
		\mathbf{x}^{\dagger}\in \mathcal{X}.\label{proof lemma-equivalent-noncvx-2-kkt-2}
	\end{gather}
	\noindent\hrule
	\begin{gather}
		\sum_{i\in \mathcal{I}}(\nabla_{\mathbf{x}_{i}} f(\mathbf{x}^{\dagger})
		\!+\!\nabla\mathbf{g}_{i}(\mathbf{x}^{\dagger}_{i})\boldsymbol{\mu}_{i}
		\!+\!\nabla\mathbf{h}_{i}(\mathbf{x}^{\dagger}_{i})\boldsymbol{\lambda}_{i}
		)^{T}(\mathbf{x}_{i}\!-\!\mathbf{x}_{i}^{\dagger})\nonumber
		\\
		\!+\sum_{i\in \mathcal{I}}\!(\!-\!\boldsymbol{\mu}_{i}\!+\!\boldsymbol{\mu})^{T}(\mathbf{z}_{i}\!-\!\mathbf{z}^{\dagger}_{i})
		\!+\!(\!-\!\boldsymbol{\lambda}_{i}\!+\!\boldsymbol{\lambda})^{T}(\mathbf{z}_{i+I}\!-\!\mathbf{z}^{\dagger}_{i+I})
		\geq\mathbf{0}, \
		\forall \mathbf{x}\in\mathcal{X}, \mathbf{z}\in\mathbb{R}^{rI+mI},\label{proof lemma-equivalent-noncvx-2-kkt-3}
		\\
		\hspace{-5mm}\sum_{i\in \mathcal{I}}\boldsymbol{\mu}\odot\mathbf{z}_{i}^{\dagger}\!=\!\mathbf{0},\
		\sum_{i\in \mathcal{I}}\mathbf{z}_{i}^{\dagger}\!\preceq\!\mathbf{0}, \ \sum_{i\in \mathcal{I}}\mathbf{z}^{\dagger}_{i+I}=\!\mathbf{0},\
		\mathbf{x}^{\dagger}\in \mathcal{X},\
		\boldsymbol{\mu}_{i}\odot(\mathbf{g}_{i}(\mathbf{x}^{\dagger}_{i})\!-\!\mathbf{z}_{i}^{\dagger})\!=\!\mathbf{0}, \
		\mathbf{g}_{i}(\mathbf{x}^{\dagger}_{i})\!-\!\mathbf{z}_{i}^{\dagger}\!\preceq\!\mathbf{0},\ 
		\mathbf{h}_{i}(\mathbf{x}^{\dagger}_{i})\!-\!\mathbf{z}_{i}^{\dagger}\!=\!\mathbf{0},\ 
		i\in\mathcal{I}.\label{proof lemma-equivalent-noncvx-2-kkt-4}
	\end{gather}
	\noindent\hrule
\end{figure*}

\begin{lemma}\label{lemma equivalent-noncvx-2}
	Problem $\mathcal{P}_{2}$ and Problem $\mathcal{P}_{2}'$ are equivalent.
\end{lemma}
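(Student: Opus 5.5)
The plan mirrors the proof of Lemma~\ref{lemma equivalent-noncvx-1}: write the KKT conditions of both problems, i.e., (\ref{proof lemma-equivalent-noncvx-2-kkt-1})--(\ref{proof lemma-equivalent-noncvx-2-kkt-2}) for Problem $\mathcal{P}_{2}$ with multipliers $\boldsymbol{\mu}\succeq\mathbf{0}$, $\boldsymbol{\lambda}$, and (\ref{proof lemma-equivalent-noncvx-2-kkt-3})--(\ref{proof lemma-equivalent-noncvx-2-kkt-4}) for Problem $\mathcal{P}_{2}'$ with multipliers $\boldsymbol{\mu}_i\succeq\mathbf{0}$, $\boldsymbol{\lambda}_i$ for the per-block constraints and $\boldsymbol{\mu}\succeq\mathbf{0}$, $\boldsymbol{\lambda}$ for the two aggregate constraints. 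Then give explicit maps between stationary points in both directions. The multipliers of the new per-block constraints should simply be copies of the aggregate multipliers.

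\textbf{From $\mathcal{P}_2$ to $\mathcal{P}_2'$.} Given a stationary point $\mathbf{x}^{\dagger}$ of $\mathcal{P}_2$ with $(\boldsymbol{\mu},\boldsymbol{\lambda})$, set $\mathbf{z}_i^{\dagger}=\mathbf{g}_i(\mathbf{x}_i^{\dagger})$, $\mathbf{z}_{i+I}^{\dagger}=\mathbf{h}_i(\mathbf{x}_i^{\dagger})$, $\boldsymbol{\mu}_i=\boldsymbol{\mu}$, and $\boldsymbol{\lambda}_i=\boldsymbol{\lambda}$ for all $i\in\mathcal{I}$. Then the $\mathbf{z}$-terms in (\ref{proof lemma-equivalent-noncvx-2-kkt-3}) vanish identically, and the $\mathbf{x}$-part coincides with (\ref{proof lemma-equivalent-noncvx-2-kkt-1}). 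The per-block constraints hold with equality, so their complementarity is trivial. Finally, $\sum_i\mathbf{z}_i^{\dagger}=\sum_i\mathbf{g}_i(\mathbf{x}_i^{\dagger})$, so feasibility and $\boldsymbol{\mu}\odot\sum_i\mathbf{z}_i^{\dagger}=\mathbf{0}$ transfer directly from (\ref{proof lemma-equivalent-noncvx-2-kkt-2}).

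\textbf{From $\mathcal{P}_2'$ to $\mathcal{P}_2$.} This is the main step. Given a stationary point $(\mathbf{x}^{\dagger},\mathbf{z}^{\dagger})$ of $\mathcal{P}_2'$, I will use that the aggregate constraints on $\mathbf{z}$ define a polyhedral set $\mathcal{Z}$. Treating it as the convex set in the variational inequality, the $\mathbf{z}$-part of (\ref{proof lemma-equivalent-noncvx-2-kkt-3}) says $-\boldsymbol{\mu}_i$ lies in the normal cone, i.e., the block multipliers are tied together across $i$. If instead the aggregate constraints are dualized with $\mathbf{z}$ free in $\mathbb{R}^{rI+mI}$, as in (\ref{proof lemma-equivalent-noncvx-2-kkt-3}), then choosing $\mathbf{z}_i$ arbitrary forces $\boldsymbol{\mu}_i=\boldsymbol{\mu}$ and $\boldsymbol{\lambda}_i=\boldsymbol{\lambda}$ for all $i$. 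Substituting these into the $\mathbf{x}$-part yields exactly (\ref{proof lemma-equivalent-noncvx-2-kkt-1}), and $\boldsymbol{\mu}\succeq\mathbf{0}$ is inherited.

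\textbf{Feasibility and complementarity for $\mathcal{P}_2$.} Summing the per-block constraints gives $\sum_i\mathbf{h}_i(\mathbf{x}_i^{\dagger})=\sum_i\mathbf{z}_{i+I}^{\dagger}=\mathbf{0}$ and $\sum_i\mathbf{g}_i(\mathbf{x}_i^{\dagger})\preceq\sum_i\mathbf{z}_i^{\dagger}\preceq\mathbf{0}$. For complementarity, from $\boldsymbol{\mu}\odot(\mathbf{g}_i(\mathbf{x}_i^{\dagger})-\mathbf{z}_i^{\dagger})=\mathbf{0}$ for each $i$ and $\boldsymbol{\mu}\odot\sum_i\mathbf{z}_i^{\dagger}=\mathbf{0}$, summing over $i$ gives $\boldsymbol{\mu}\odot\sum_i\mathbf{g}_i(\mathbf{x}_i^{\dagger})=\mathbf{0}$, which is what (\ref{proof lemma-equivalent-noncvx-2-kkt-2}) requires.

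The obstacle I expect is the care needed in this summation and in showing the block multipliers coincide. Slack in the per-block inequalities can exist coordinatewise, and the argument works only because all blocks share the same $\boldsymbol{\mu}$; so I must establish $\boldsymbol{\mu}_i=\boldsymbol{\mu}$ \emph{before} summing the complementarity conditions. Both directions together give equivalence in the sense of Definition~\ref{definition equivalent-problem}.
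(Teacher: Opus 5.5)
Your proposal is correct and follows the paper's proof essentially step for step. In one direction you use the same explicit map $\mathbf{z}_i^{\dagger}=\mathbf{g}_i(\mathbf{x}_i^{\dagger})$, $\mathbf{z}_{i+I}^{\dagger}=\mathbf{h}_i(\mathbf{x}_i^{\dagger})$, $\boldsymbol{\mu}_i=\boldsymbol{\mu}$, $\boldsymbol{\lambda}_i=\boldsymbol{\lambda}$; in the other you fix $\mathbf{x}=\mathbf{x}^{\dagger}$ in (\ref{proof lemma-equivalent-noncvx-2-kkt-3}) and let $\mathbf{z}$ range over all of $\mathbb{R}^{rI+mI}$ to force $\boldsymbol{\mu}_i=\boldsymbol{\mu}$ and $\boldsymbol{\lambda}_i=\boldsymbol{\lambda}$.

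Your identity $\boldsymbol{\mu}\odot\sum_{i}\mathbf{g}_i(\mathbf{x}_i^{\dagger})=\sum_{i}\boldsymbol{\mu}\odot(\mathbf{g}_i(\mathbf{x}_i^{\dagger})-\mathbf{z}_i^{\dagger})+\boldsymbol{\mu}\odot\sum_{i}\mathbf{z}_i^{\dagger}=\mathbf{0}$ justifies complementarity more cleanly than the paper's ``eliminating variables'' remark. The normal-cone aside is unnecessary and has a sign slip: it is the stacked vector $(\boldsymbol{\mu}_1,\dots,\boldsymbol{\mu}_I,\boldsymbol{\lambda}_1,\dots,\boldsymbol{\lambda}_I)$, not $-\boldsymbol{\mu}_i$, that lies in the normal cone to the aggregate-constraint set at $\mathbf{z}^{\dagger}$.
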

\begin{IEEEproof}
	If $\mathbf{x}^{\dagger}$ is a stationary point of Problem $\mathcal{P}_{2}$, then there exist Lagrange multipliers $\boldsymbol{\mu}\succeq\mathbf{0}$ and $\boldsymbol{\lambda}$ that together with $\mathbf{x}^{\dagger}$ satisfy (\ref{proof lemma-equivalent-noncvx-2-kkt-1}) and (\ref{proof lemma-equivalent-noncvx-2-kkt-2}), as shown at the top of next page.
	If $(\mathbf{x}^{\dagger}, \mathbf{z}^{\dagger})$ is a stationary point of Problem $\mathcal{P}_{2}'$, then there exist Lagrange multipliers $\boldsymbol{\mu}_{i}\succeq\mathbf{0}$, $\boldsymbol{\lambda}_{i}$, $i\in\mathcal{I}$, $\boldsymbol{\mu}\succeq\mathbf{0}$, and $\boldsymbol{\lambda}$ that together with $(\mathbf{x}^{\dagger}, \mathbf{z}^{\dagger})$ satisfy (\ref{proof lemma-equivalent-noncvx-2-kkt-3}) and (\ref{proof lemma-equivalent-noncvx-2-kkt-4}), as shown at the top of next page.
	First, suppose that $\mathbf{x}^{\dagger}$ is a stationary point of Problem $\mathcal{P}_{2}$, and $\boldsymbol{\mu}$ and $\boldsymbol{\lambda}$ are the corresponding Lagrange multipliers.
	Let $\mathbf{z}_{i}^{\dagger}=\mathbf{g}_{i}(\mathbf{x}^{\dagger}_{i})$, $\mathbf{z}_{i+I}^{\dagger}=\mathbf{h}_{i}(\mathbf{x}^{\dagger}_{i})$, 
	$\boldsymbol{\mu}_{i}=\boldsymbol{\mu}$, and $\boldsymbol{\lambda}_{i}=\boldsymbol{\lambda}$,  $i\in\mathcal{I}$.
	Then, from (\ref{proof lemma-equivalent-noncvx-2-kkt-1}) and (\ref{proof lemma-equivalent-noncvx-2-kkt-2}), we can conclude that $(\mathbf{x}^{\dagger}, \mathbf{z}^{\dagger})$ together with $\boldsymbol{\mu}$, $\boldsymbol{\lambda}$, $\boldsymbol{\mu}_{i}$, and $\boldsymbol{\lambda}_{i}$, $i\in\mathcal{I}$, satisfies (\ref{proof lemma-equivalent-noncvx-2-kkt-3}) and (\ref{proof lemma-equivalent-noncvx-2-kkt-4}), which implies that $(\mathbf{x}^{\dagger}, \mathbf{z^{\dagger}})$ is a stationary point of Problem $\mathcal{P}_{2}'$.
	Next, suppose that $(\mathbf{x}^{\dagger}, \mathbf{z}^{\dagger})$ is a stationary point of Problem $\mathcal{P}_{2}'$, and $\boldsymbol{\mu}$, $\boldsymbol{\lambda}$, $\boldsymbol{\mu}_{i}$, and $\boldsymbol{\lambda}_{i}$, $i\in\mathcal{I}$ are the corresponding Lagrange multipliers.
	Substituting $\mathbf{x}=\mathbf{x}^{\dagger}$ into (\ref{proof lemma-equivalent-noncvx-2-kkt-3}), we have 
	$
	\sum_{i\in \mathcal{I}}(-\boldsymbol{\mu}_{i}'+\boldsymbol{\mu})^{T}(\mathbf{z}_{i}-\mathbf{z}^{\dagger}_{i})
	+\sum_{i\in \mathcal{I}}(-\boldsymbol{\lambda}_{i}'+\boldsymbol{\lambda})^{T}(\mathbf{z}_{i+I}-\mathbf{z}^{\dagger}_{i+I})
	\geq\mathbf{0}
	$ for all $\mathbf{z}\in\mathbb{R}^{rI+mI}$, 
	which implies $\boldsymbol{\mu}_{i}=\boldsymbol{\mu}$ and $\boldsymbol{\lambda}_{i}=\boldsymbol{\lambda}$, $i\in\mathcal{I}$.
	Thus, (\ref{proof lemma-equivalent-noncvx-2-kkt-3}) reduces to (\ref{proof lemma-equivalent-noncvx-2-kkt-1}). 
	Then, by eliminating variables $\mathbf{z}^{\dagger}_{i}$, $i\in\mathcal{I}$ in (\ref{proof lemma-equivalent-noncvx-2-kkt-4}) and replacing variables $\mathbf{z}^{\dagger}_{i+I}$ in (\ref{proof lemma-equivalent-noncvx-2-kkt-4}) with $\mathbf{h}_{i}(\mathbf{x}^{\dagger}_{i})$ based on the equalities $\mathbf{h}_{i}(\mathbf{x}^{\dagger}_{i})-\mathbf{z}_{i+I}^{\dagger}=\mathbf{0}$, $i\in\mathcal{I}$, (\ref{proof lemma-equivalent-noncvx-2-kkt-4}) reduces to (\ref{proof lemma-equivalent-noncvx-2-kkt-2}).
	Thus, (\ref{proof lemma-equivalent-noncvx-2-kkt-1}) and (\ref{proof lemma-equivalent-noncvx-2-kkt-2}) hold true for $\mathbf{x}^{\dagger}$,  $\boldsymbol{\mu}$, and $\boldsymbol{\lambda}$, which implies that $\mathbf{x}^{\dagger}$ is a stationary point of Problem $\mathcal{P}_{2}$.
\end{IEEEproof}

%
%

\bibliographystyle{IEEEtran}
\bibliography{IEEEabrv,references}

\vfill

\end{document}